\definecolor{mygreen}{RGB}{28,172,0} 
\definecolor{mylilas}{RGB}{170,55,241}
\newcommand{\comment}[1]{} 
\newcommand{\cec}{\color{black}}
\newcommand{\peter}{\color{black}}
\newcommand\R{{\mathbb R}} 
\newcommand\N{{\mathbb N}} 
\newcommand\pr{{\mathbb P}} 
\newcommand\E{{\mathbb E}} 
\newcommand\cE{{\mathcal{E}}} 
\newcommand\emm{{\mathfrak m}} 
\newcommand\id{{\mathbf{1}}} 
\newcommand\I{E}
\newcommand\dx{\mathrm{d}x} 
\newcommand\dy{\mathrm{d}y} 
\newcommand\df{\mathrm{d}f} 
\newcommand\dw{\mathrm{d}w} 
\newcommand\dz{\mathrm{d}z} 
\newcommand\di{\mathrm{d}} 
\newcommand\e{\mathrm{e}} 
\newcommand\tetta{\frac{\varrho}{\varrho+ 1}} 
\newcommand\sss{\scriptscriptstyle}
\newtheorem{theo}{Theorem}
\newtheorem{prop}[theo]{Proposition}
\newtheorem{lem}[theo]{Lemma}
\newtheorem{cor}[theo]{Corollary}
\newtheorem*{note}{Note}
\newtheorem{myenv}{Assumption}
\newtheorem{myenb}{Assumption}
\newcommand{\heap}[2]  {\genfrac{}{}{0pt}{}{#1}{#2}}
\newcommand{\ssup}[1] {{\scriptscriptstyle{({#1}})}}
\newcommand{\sfrac}[2] {\mbox{$\frac{#1}{#2}$}}
\newcommand{\bor}[1] {{#1}_\kappa}
\title{Competing growth processes with random growth rates\\ and random birth times}
\author{C\'ecile Mailler, Peter M\"orters and Anna Senkevich}
\date{} 
\begin{document}
\maketitle
\begin{abstract}
\noindent  {\peter Comparing individual contributions in a strongly interacting system of stochastic growth processes can be a very difficult problem. 
This is particularly the case when new growth processes are initiated depending on the state of previous ones and the growth rates of the individual processes
are themselves random.
We propose a novel technique to deal with such problems and show how it can be applied to a broad range of examples where it produces new insight and surprising results. The method relies on two steps: In the first step, which is highly problem dependent, the  growth processes are jointly embedded into continuous time so that their evolutions after
initiation become approximately independent 
while we retain some control over the initiation times. 
Once such an embedding is achieved, the second step is to apply a Poisson limit theorem that enables a comparison of the state of the processes initiated in a critical window and therefore allows an asymptotic description of the extremal process. 
In this paper we prove a versatile limit theorem of this type and show how this tool can be applied to obtain novel asymptotic results  for a variety of interesting stochastic processes. These  include  (a) the maximal degree in different types of preferential attachment networks with fitnesses like the well-known Bianconi-Barab\'asi tree and a network model of Dereich, (b) the most successful mutant in a branching processes evolving by selection and mutation, and (c) the ratio between the largest and second largest cycles in a random permutation with random cycle weights, which can also be interpreted as a disordered version of Pitman's Chinese restaurant process.}

%

\end{abstract}

{\footnotesize
\setcounter{tocdepth}{2}
\tableofcontents}
\clearpage
\section{Introduction} \label{sec:intro}
\subsection{Motivation} \label{sec:motivation}
\noindent 

Suppose a population of immortal individuals evolves as follows:  We start with one individual with a fitness sampled from a fixed bounded distribution $\mu$. When the population consists of $n$ individuals, the next individual selects its parent from the $n$ existing individuals with a probability proportional to their individual fitnesses. With high probability the new individual inherits the fitness from its parent and joins the parent's family, but with small probability $\beta>0$ 
the individual is a mutant and founder of a new family, getting a fitness sampled independently of everything else from 
the distribution~$\mu$. Even for such a simple model of a population evolving by selection and mutation {\peter the structure at large finite times, i.e.\ when the system is not in equilibrium, can be hard to analyse. The difficulty is that it takes time until an individual born with high fitness can use its advantage to build a large family.  Quantities like the relative size of the largest family when the total population has a given large size depend on these delays and therefore involve a comparison of many different random influences which are typically very hard to  control.}
\medskip

In this paper we investigate a broad class of problems {\peter loosely similar to  the above} providing a novel technique to their solution. 
For the method to work one needs  an embedding of  the problem into continuous time that makes the growth processes of the individuals approximately independent. 
{\peter{Such embeddings have been used as a tool for urn processes since the seminal work of Athreya and Karlin~\cite{AK68} and can be constructed for a wide range of models.}}
In our example the embedding is achieved by equipping every individual with fitness $f$ with an independent Poisson process of intensity $f$ initiated at the individual's birthtime. 
The jump times of the Poisson process correspond to the times when the individual is chosen as a parent. Then, given a population of $n$ individuals 
the probability that each individual is the next parent is proportional to its fitness. Each family is equipped with an independent fitness sampled from a 
distribution~$\mu$ and, starting from its birthtime, grows as an independent Yule process with parameter $(1-\beta)f$, where $f$ is the fitness of the family and $\beta$ the mutation probability. The downside of looking at the problem in this time-scale is that the families' birthtimes depend in a complex way on the multitude of independent growth processes and all we can hope for is an asymptotic expansion of the birthtime  $\tau_n$ of the $n$th family.
\medskip

The main step in our technique is to use extreme value theory and the approximate independence of the growth processes in our embedding to provide  asymptotic properties of the largest family. As in our example we assume that the growth rates  are sampled from an i.i.d.\  sequence $F_1, F_2, \ldots$ of~bounded 
random variables, while the birth times $\tau_1, \tau_2, \dots$ may be random and depend in an {\peter arbitrarily complex} fashion on the growth processes. In the most interesting cases the birth times are themselves arising from an exponentially growing process so that the largest family at time $t$ arises in competition of the few families born early, which have a longer time to grow, and the many families born late, among which the occurence of a higher birth rate is more probable. 
We will give interesting examples below, but first we give a flavour of the problem by a calculation based on the simplest nontrivial scenario.
\smallskip

For this purpose let the birth time of the $n$th family be $\tau_n=\frac1\lambda \log n$  and its size at time~$t$~be
$$Z_n(t)= \left\{ \begin{array}{ll}
\e^{(t-\tau_n) F_n} &  \mbox{ if } \tau_n<t,\\
0 & \mbox{ otherwise.}\\
\end{array}
\right.$$
Suppose $\mu$ is the law of $F_n$ on the interval $(0,1]$ and 
let $1\ll T(t) \ll t$. Then  
\begin{align*}
\pr\Big( \e^{-(t-T(t))}\max_n Z_n(t) \leq \e^{x}\Big) & =
\pr\Big( (t-\tau_n) F_n \leq (t-T(t))+x \, \, \forall n \colon \tau_n\leq t \Big)\\
& = \prod_{\tau_n \leq T(t)-x} \pr\Big(F_n \leq \frac{t-T(t)+x}{t-\tau_n} \Big)\\
\end{align*}
\begin{align*}
\phantom{\pr\Big( \e^{-(t-T(t))}\max_n Z_n(t) \leq \e^{x}\Big)} & \phantom{=
\pr\Big( (t-\tau_n) F_n \leq (t-T(t))+x \, \, \forall n \colon \tau_n\leq t \Big)}\\
& = \exp\bigg( \sum_{n\leq \e^{\lambda (T(t)-x)}} \log \Big(1-\mu\Big(\big( \sfrac{t-T(t)+x}{t-\tau_n},1\big]\Big)\Big)\bigg) \\
& = \exp\bigg( -  (1+o(1)) \,\sum_{n\leq \e^{\lambda (T(t)-x)}} 
\mu\Big(\big( \sfrac{t-T(t)+x}{t-\tau_n},1\big]\Big)\bigg).
\end{align*}
The task is now to choose $T(t)$ such that, as $t\uparrow\infty$, 
$$\sum_{n\leq \e^{\lambda (T(t)-x)}} 
\mu\Big(\big( \sfrac{t-T(t)+x}{t-\tau_n},1\big]\Big) \longrightarrow \phi(x),$$
for some nondegenerate function $\phi$. The solution depends on the tail of $\mu$ at one.
Supposing for example that 
$\mu((1-x,1])\sim x^\alpha$ as $x\downarrow 0$, for some index $\alpha>0$, we get
\begin{align*}
\sum_{n\leq \e^{\lambda (T(t)-x)}} 
\mu\Big(\big( \sfrac{t-T(t)+x}{t-\tau_n},1\big]\Big) 
& \sim  \frac1{t^\alpha} \sum_{n\leq \e^{\lambda (T(t)-x)}}  \big( {T(t)-\tau_n-x}\big)^\alpha.
\end{align*}
Letting $T(t)=\frac\alpha\lambda \log t$ this is equivalent to
$$\frac1{t^\alpha} \int_0^{t^\alpha \e^{-\lambda x}} \Big( -\sfrac1{\lambda} \log\big(\sfrac{n}{t^\alpha}\big) -x\Big)^\alpha \, dn
= \e^{-\lambda x} \int_0^\infty \lambda \e^{-\lambda u} u^\alpha \, du =
\e^{-\lambda x} \lambda^{-\alpha}\Gamma( \alpha+1),$$
using the substitution $u=-\frac1\lambda \log\big(\sfrac{n}{t^\alpha}\big)-x$. Hence we have that
$$\e^{-t} \big( \sfrac{(\lambda t)^\alpha}{\Gamma(\alpha+1)}\big)^{\frac1\lambda}  \, \max_n Z_n(t) \Longrightarrow \Phi_\lambda,$$
where $\Phi_\lambda$ is the Fr\'echet distribution with parameter $\lambda$. 
\smallskip\pagebreak[3]

This result, and further asymptotic results on the birthtime and fitness of the largest family, can be generalised to a framework where 
\begin{itemize}
\item 
$\mu$  is a sufficiently smooth distribution  in the maximum domain of attraction of either the  \emph{Weibull} or the \emph{Gumbel distribution} of extreme value theory, 
\item
the growth processes $(Z_n(\tau_n+s) \colon s\geq 0)$ are asymptotically 
independent random processes with growth rates 
given as $\gamma F_n$, for some~$\gamma>0$,
\item the birth times $\tau_n$ 
are themselves random and may depend on the growth processes. 
\end{itemize} 
Generalising the above calculation to such a setup requires, of course, more sophisticated methods. 
Our approach is to describe the state of a family at time $t$ as a point in the space $(-\infty, \infty) \times (-\infty, \infty) \times  (0,\infty)$, where the first coordinate corresponds to its birth time, the second to its fitness  and the third to its size at time $t$. If $\mu$ is in the maximum domain of attraction of the \emph{Weibull distribution}, introducing a $t$-dependent scaling of the three coordinates (so that the focus is on a carefully chosen window) and letting $t\to\infty$ we obtain a limiting point process, 
see Theorem~\ref{theo:22}. In this limiting process the point with the maximal 
third coordinate identifies the largest family, allowing to read off limit theorems for its size, fitness and birthtime, see Corollary~\ref{cor:23}.  
{A similar result 
was proved in \cite{Main} in the context of reinforced branching process. Our result extends that of \cite{Main} to the more general context of competing growth processes, allowing for a much wider range of applications.}
\smallskip

The \emph{main technical results} of the present paper 
provide corresponding results for the case that $\mu$ is in the maximum domain of  attraction of the \emph{Gumbel distribution}. 
{This case is considerably more difficult than the Weibull case and new ideas are needed.}
The reason for this is that the window in which one has to search for the largest family is larger, having unbounded width in the first component. Therefore for a limit theorem the first component requires scaling, and hence the scaling of the second component depends not only on $t$ but also on~$n$, the birth rank  of the family. Using some additional regularity properties of the fitness distribution~$\mu$ however allows to make the scaling of the third component independent of $n$, so that we can still achieve a powerful Poisson limit theorem (Theorem~\ref{theo:theo_main}) as well as convergence of the scaled family size to a Fr\'echet distribution and of the standardised birth time to a  Gaussian distribution (Corollary~\ref{cor:lim_fam}). Taken together, our results give an essentially complete picture for the behaviour of the largest family for fitness distrbutions~$\mu$ with bounded support.
{Fitness distributions with unbounded support lead to superexponentially growing processes, which have more complex behaviour and cannot be treated here.}\pagebreak[3]
\smallskip

{\cec As application of our main technical result, we obtain results on the extremal behaviour of a variety of models that all fall under our general framework of competing growth processes:}
Our \emph{main examples} of competing growth processes originate from the study of dynamic network models. In these models new vertices get born at random times and are connected to existing vertices by certain rules. The degree of a vertex grows over time with a growth rate given by the attractiveness, or fitness, of the vertex.  {\cec We show asymptotic results} for the vertex of maximal degree at a large time $t$ and describe its degree, fitness and birthtime as a function of $t$: {\cec see Section~\ref{sec:BB} for the Bianconi and Barab\'asi network~\cite{Bianconi} and Section~\ref{sec:Dereich} for a model of Dereich~\cite{Dereich}}. {\cec Applications of our main technical result also include asymptotic results on the largest family in} the population process process with selection and mutation described above {\cec (see Section~\ref{exa:RBP})}, and on the largest tables in a disordered Chinese restaurant process for which we derive a surprising result 
on the relative sizes of the two largest occupied tables {\cec (see Section~\ref{sec:cycles})}.
We will {\cec explain how to get these results} in Section~\ref{sec:exa}.
\smallskip

The paper is structured as follows. In Section~\ref{sec:definition} we give a full definition of our framework and assumptions on the embedded process and state the main results. Section~\ref{sec:f} gives examples of fitness distributions to which our results apply. Section~\ref{sec:exa} is devoted to a range of interesting examples of growth processes and  describes applications of our general results to these examples. 
The further sections are devoted to the proofs and 
their structure will be explained at the end of Section~\ref{sec:exa}.


\subsection{Our framework and principal technical results} \label{sec:definition}
Let $\mu$ be a probability distribution on {\peter the nonnegative real numbers with $\mathfrak s=$esssup$(\mu)<\infty$. To rule out less interesting cases we assume that
$\mu$ has no atom at zero or at $\mathfrak s$. Without loss of generality we can and will further assume that $\mathfrak s=1$ and hence that $\mu$ is supported on the interval $(0,1)$. Let}
\begin{itemize}
\item $(F_n)_{n\ge1}$ be i.i.d.\ $\mu$-distributed random variables;
\item $(\tau_n)_{n\ge1}$ be a non-decreasing sequence of positive random variables with $\tau_1 = 0$;
\item $Z_n(t) = X_n(F_n(t - \tau_n)){\cec \boldsymbol 1_{t\geq \tau_n}}$ for a family $(X_n(t):t\geq 0)_{n \geq 1}$ of non-decreasing integer-valued processes. 
\end{itemize}
Define $M(t) : = \max \{n \geq 1 \colon \tau_n \le t \}$ and \smash{$N(t) : = \sum_{n=1}^{M(t)} Z_n(t)$}.  We view this as a population of immortal individuals and we refer to $Z_n(t)$ as the size of the $n$th family, $M(t)$ the number of families in the system and $N(t)$ the total size of the population respectively, at time $t$. From this perspective $\tau_n$ represents the foundation time of the $n$th family. Furthermore, we see $F_n$ as a fitness parameter of the $n$th family, determining the rate at which new offspring are born into it. 
\smallskip
\pagebreak[3]

In this paper we aim at proving convergence results for the maximal family in the population. For this we require the following assumptions on the growth processes and fitness distribution. 

\begin{myenv}[Families' foundation times]	
	 There exists $\lambda > 0$ such that for all $n \in \N$
	\begin{equation*}
		\tau_n =  \tau^*_n + T + \varepsilon_n, 
	\end{equation*}
where $\tau_n^* := \frac{1}{\lambda} \log n $,  $T$ is a finite random variable, and 
$\varepsilon_n \rightarrow 0$ almost surely as $n \rightarrow \infty$.
 	\label{as:Nerman}
\end{myenv}


\begin{myenv}[Growth processes]
There exist $\gamma > 0$ and an i.i.d.\ sequence of  processes\\ ${((Y_n(t) : t \ge 0))_{n\ge 1}}$
 independent of $(F_n)_{n\ge 1}$, such that 
\begin{equation*}
	\Delta_n(t) := \sup_{u \ge t} \e^{-\gamma u} \Big |X_n (u) - Y_n(u)  \Big |
	\quad{\cec \text{ (defined for all }t\geq 0)}
\end{equation*}
satisfies for all $\varepsilon, {\kappa} > 0$, 
\begin{equation}
	\sup_{n\in \bor{I}(t)} \pr \big(\Delta_n (t) \ge \varepsilon \big|\, (F_i)_{i\in\N}\big) \rightarrow 0, \quad \text{
in probability as $t \rightarrow \infty$,}
	\label{equ:conv}
\end{equation}
where {$\bor{I}(t)$} is a collection of indices specified below in dependence on the fitness distribution~$\mu$.
	\label{as:growth2}
\end{myenv}
%

\begin{myenv}[Growth rate]
	There exists a {\cec non-negative} random variable $\xi$ such that {\peter $$\E \big[\xi^{\frac{\lambda}{\gamma}}\big]<\infty \qquad \mbox{  and } $$}
	\\[-9mm]
	\begin{equation*}
	\e^{-\gamma t} Y_1(t) \longrightarrow \xi , \quad \text{ almost surely as $t \rightarrow \infty$.}
	\end{equation*} 
	{\cec The distribution of $\xi$ is absolutely continuous with respect to the Lebesgue measure. By $\nu$  we denote its density on $[0, \infty)$.} 
	\label{as:growth}
\end{myenv}
\ \\[-10mm]

\begin{myenv}[Concentration of growth] 
There exist $c_0 , \eta> 0$ such that, for $n \in \N$, we have
\begin{equation*}
	\pr\big(\max_{u \ge 0} X_n(u) \e^{-\gamma u} \ge x \big|\, (F_m)_{m\in\N}\big) \le c_0 \e^{- \eta x}, \quad \text{ for all $x \ge 0$.}
\end{equation*}	%
	\label{as:fam_max}
\end{myenv}
\ \\[-20mm]

{\cec\begin{note}
On the one hand, Assumption {\bf (A1)} implies that, for all finite times $t$, the number of families born before time $t$ is finite almost surely. On the other hand, Assumption {\bf (A4)} implies that each family stays finite at all finite times almost surely. Assumptions {\bf (A1)} and {\bf (A4)} together thus imply that our competing growth process does not explode in finite time.
\end{note}}

Beyond these four assumptions on the growth processes we need assumptions on the fitness distribution~$\mu$. We discuss two different possible classes of fitness distributions~$\mu$. The first class,
the main case discussed in this paper,  corresponds to $\mu$ being in the maximum domain of attraction of the Gumbel distribution. We make the following assumptions.
\begin{myenv}[$\mu$ in the maximum domain of attraction of the Gumbel distribution]\ \\
The function {\cec $m : x\mapsto -\log \mu (x,1]$ defined for all $x\in [0,1)$} is twice differentiable {\cec on $[0,1)$} and satisfies
\begin{enumerate}[\emph{\bf{(A5.\arabic{enumi})}}\quad]
\renewcommand{\theenumi}{(A5.\arabic{enumi})}
	\item $m'(x) > 0$ and $m''(x) >0$ for all $x \in {\cec [0,1)}$; \label{as:i}
	\item $\lim_{x \uparrow 1} \frac{m''(x)}{(m'(x))^2} = 0$; \label{as:ii} 
	\item $\exists \varkappa > 0$ such that  $\lim_{x\uparrow 1} \frac{m''(x)m(x) x}{(m'(x))^2} = \varkappa$; \label{as:kappa}
	\item $\lim_{x\uparrow 1} \frac{m(x)}{m'(x)} = 0$.  \label{as:iv}
\end{enumerate}
\label{as:fit}
\end{myenv}

\begin{note}
Assumption~{\bf (A5)} is sufficient for $\mu$ to be in the maximum domain of attraction of the Gumbel distribution {\cec (see \cite[Section~1.1]{Resnick})}, and contains the most important cases, but it is not formally necessary.  We discuss this further in Section~\ref{sec:f}.
\end{note}
\pagebreak[3]

\noindent Under Assumption~{\bf (A5)}, {\cec for all $t\geq 0$}, we define $\sigma_t$ as {\peter the minimum of~1 (for technical reasons)} and the unique solution of
	\begin{equation}
		(\log g)'( \lambda x) = \frac{1}{\lambda(t-x) },
		\label{equ:sigma}
	\end{equation}
where $g(x) = m^{-1}(x)$, see Lemma \ref{lem:sigma} for a proof of existence and uniqueness of $\sigma_t$. We then define the collection of indices in {\bf (A2)} as
\begin{equation}
\bor{I}(t):= \big \{n :\sfrac{|\tau^*_n - \sigma_t|}{\sqrt{\sigma_t}} \le \kappa\big\}, {\quad \text{ for all } \kappa > 0, {\cec t\geq 0}.}
\label{equ:IA}
\end{equation}

The other class of distributions $\mu$ we consider is the maximum domain of attraction of the Weibull distribution class {\cec (see \cite[Section~1.2]{Resnick})}. 

\begin{myenb}[$\mu$ in the maximum domain of attraction of the Weibull distribution]
{\peter The distribution $\mu$  has a regularly varying tail in one, meaning that there exists }
$\alpha > 0$ and a slowly varying function $\ell$ such that 
		$\mu(1 - \varepsilon, 1) = \varepsilon^\alpha \ell(\varepsilon)$ {\cec for all $\varepsilon\in[0,1]$}.
	\label{as:RV}
\end{myenb}
{
\noindent {For all $t\geq 0$}, we set
\begin{equation}
\sigma_t := \tau_{n(t)}, \quad \text{where } \quad n(t) = \bigg \lceil \frac{1}{\mu(1 - t^{-1}, 1)}\bigg\rceil 
\label{equ:sigmaB}
\end{equation}
and use this to define 
\begin{equation}
\bor{I}(t) := \big \{n : | \tau^*_n - \sigma_t| \le 2 |T| + \kappa\big\} ,  \quad \text{ for all } \kappa > 0,
\label{equ:IB}
\end{equation}
for use in Assumption~{\bf (A2)}. Assumption~{\bf (B5)} implies that $n(t) = \lceil {t^\alpha}/{\ell(t^{-1})} \rceil $ and so 
$\log n(t) \sim \alpha \log t {\cec - \log(\ell(t^{-1}))}.$
Using this {\cec together with Assumption {\bf (A1)}} we can write 
\begin{equation*}
\tau_{n(t)}= \frac{1}{\lambda} \log n(t) + T + \varepsilon_{n(t)}  = \frac{\alpha}{\lambda} \log t - \frac{1}{\lambda} \log(\ell(t^{-1}))+ T + o(1),
\end{equation*}
{\cec almost surely} as $t \rightarrow \infty$, by Assumption~{\bf (A1)}. 
\pagebreak[3]
\medskip


We now state our results, first in the easier case of  $\mu$ satisfying Assumption~{\bf (B5)}.  
For all $t \ge 0$, we define the point process
\begin{equation}
	\Gamma_t = \sum_{n = 1}^{M(t)} \delta \big (\tau_n - \sigma_t, t(1 - F_n), 
\e^{-\gamma (t - \sigma_t)} Z_n(t) \big),
	\label{equ:ppG}
\end{equation}
on $(-\infty,\infty) \times (0,\infty) \times (0,\infty)$, where $\delta(x)$ is the Dirac mass at $x$.
We 
look at the limits of $\Gamma_t$, strengthening the result considerably by partially compactifying the underlying space.\\

\begin{theo}[Poisson limit]
	Under assumptions {\bf (A1-4)} and {\bf (B5)} {\peter as $t\to\infty$} the point process $(\Gamma_t)_{t \ge 0}$ converges vaguely\footnotemark\hspace{1pt} {in distribution} on the space $[-\infty, \infty] \times [0, \infty] \times (0, \infty]$ to the Poisson point process with {\peter locally finite} intensity measure 
	\begin{equation*}
		\di \zeta(s, f, z) = \alpha f^{\alpha-1} \lambda \e^{\lambda s} \e^{\gamma (s + f)}\nu(z \e^{\gamma (s+ f)}) \: \di s\: \di f\: \di z,
	\end{equation*}
where $\nu$ is as in {\bf (A3)}.
	\label{theo:22}
\end{theo}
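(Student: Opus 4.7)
The plan is to apply Kallenberg's theorem for vague convergence to Poisson point processes: it suffices to verify for every relatively compact Borel set $A$ with $\zeta(\partial A) = 0$ that $\E[\Gamma_t(A)] \to \zeta(A)$ and $\pr(\Gamma_t(A) = 0) \to \e^{-\zeta(A)}$. First I would use Assumption~\textbf{(A2)} to replace the true growth processes $X_n$ by the idealised i.i.d.\ processes $Y_n$: set $\tilde Z_n(t) = Y_n(F_n(t-\tau_n)) \boldsymbol 1_{t \ge \tau_n}$ and let $\tilde\Gamma_t$ be the resulting point process. For any bounded window in the $s$-coordinate the indices $n$ that contribute to $\Gamma_t(A)$ satisfy $|\tau_n^* - \sigma_t| \le C + |T|$, hence (for $\kappa$ large enough depending only on the window) lie in $\bor{I}(t)$; this is exactly the regime in which \textbf{(A2)} forces $\Gamma_t$ and $\tilde\Gamma_t$ to be vaguely close in probability.

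Conditioning on $T$ and on $(\tau_n)$, the atoms of $\tilde\Gamma_t$ are built from the independent families $(F_n)$ and $(Y_n)$, so a triangular-array Poisson approximation applies. The random shift $T$ from \textbf{(A1)} drops out of the first coordinate because $\tau_n - \sigma_t = \tau_n^* - \tau_{n(t)}^* + \varepsilon_n - \varepsilon_{n(t)} = \tfrac{1}{\lambda} \log(n/n(t)) + o(1)$, so the number of $n$ with $\tau_n - \sigma_t \in (s_1,s_2]$ is asymptotic to $n(t)(\e^{\lambda s_2} - \e^{\lambda s_1})$. For each such $n$, Assumption~\textbf{(B5)} gives $\pr(t(1-F_n) \in (f_1,f_2]) \sim (f_2^\alpha - f_1^\alpha)\, \ell(1/t)/t^\alpha$, and the identity $n(t)\,\ell(1/t)/t^\alpha \to 1$ from the definition of $n(t)$ yields an asymptotic count $(\e^{\lambda s_2} - \e^{\lambda s_1})(f_2^\alpha - f_1^\alpha)$ of atoms in $(s_1,s_2] \times (f_1,f_2] \times (0,\infty)$, matching the first two factors $\alpha f^{\alpha-1} \lambda \e^{\lambda s}$ of the intensity $\zeta$.

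For the third coordinate, substituting $F_n = 1 - f/t$ and $\tau_n = \sigma_t + s$ gives $F_n(t - \tau_n) - (t - \sigma_t) = -s - f + O((\sigma_t + s)/t)$, and since $\sigma_t = O(\log t)$ the error vanishes as $t \to \infty$. Combined with \textbf{(A3)}, this yields $\e^{-\gamma(t-\sigma_t)} \tilde Z_n(t) \to \xi_n \e^{-\gamma(s+f)}$, whose conditional density at $z$ equals $\e^{\gamma(s+f)} \nu(z \e^{\gamma(s+f)})$, matching the third factor of $\zeta$. The Poisson-ness $\pr(\tilde\Gamma_t(A) = 0) \to \e^{-\zeta(A)}$ then follows from the classical Poisson limit theorem for independent rare indicators, as each individual $n$ contributes to a bounded box $A$ with probability $O(1/n(t))$ while there are $\Theta(n(t))$ relevant indices, so second-moment corrections vanish.

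The hardest part will be making all these approximations uniform over the whole window of $\Theta(n(t))$ indices and simultaneously controlling tightness toward the boundary of the compactified state space. Tail control on the third coordinate is delivered by the moment bound $\E[\xi^{\lambda/\gamma}] < \infty$ in \textbf{(A3)} together with the exponential concentration from \textbf{(A4)}; the dependence of $(\tau_n)$ on the growth processes is entirely absorbed by \textbf{(A1)} into the single random shift $T$ and the null sequence $\varepsilon_n$, and $T$ conveniently cancels in the difference $\tau_n - \sigma_t$, so no global conditioning on $T$ survives to the final intensity.
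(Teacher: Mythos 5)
Your overall route is the same as the one the paper uses (the printed proof of the Weibull case is deferred to the thesis, but the Gumbel-case proof in Sections~\ref{sec:local_convergence}--\ref{sec:compactification} has exactly this architecture): Kallenberg's criterion for an idealised point process, transfer to $\Gamma_t$ via \textbf{(A1)}--\textbf{(A3)} and the coupling in \textbf{(A2)}, and separate boundary estimates to justify the compactified state space. Your intensity computation is correct, including the cancellation of $T$ in $\tau_n-\sigma_t$ and the identity $n(t)\ell(1/t)t^{-\alpha}\to1$ that produces the factor $\alpha f^{\alpha-1}\lambda\e^{\lambda s}$, and the substitution $F_n(t-\tau_n)-(t-\sigma_t)=-s-f+o(1)$ giving the factor $\e^{\gamma(s+f)}\nu(z\e^{\gamma(s+f)})$.

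There is, however, one step that fails as stated: ``conditioning on $T$ and on $(\tau_n)$, the atoms of $\tilde\Gamma_t$ are built from the independent families $(F_n)$ and $(Y_n)$.'' In this framework $(\tau_n)$ is allowed to depend in an arbitrary way on the fitnesses and growth processes (this is the whole point of the setup, and is the case in all the branching/network examples), so conditioning on the birth times destroys the i.i.d.\ structure of the marks $(F_n,Y_n)$ and the triangular-array Poisson approximation cannot be applied to that conditional law. The paper avoids this by never conditioning: it first proves the Poisson limit for an auxiliary process whose positions are the \emph{deterministic} $\tau_n^*$ (and whose third coordinates are the i.i.d.\ limits $\xi_n$), where Kallenberg's theorem applies directly (the analogue of Lemma~\ref{lem:Kallenberg}), and then shows asymptotic equivalence with the true process by a Lipschitz test-function estimate using \textbf{(A1)}, the conditional form of \textbf{(A2)} on the index window $\bor{I}(t)$, and \textbf{(A3)} (the analogue of Lemma~\ref{lem_9}); your own later remarks (that $T$ cancels and the positions are asymptotically deterministic) show this is what you really need, so the fix is a reorganisation rather than a new idea. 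Separately, the compactification, which you only gesture at, is a substantive part of the argument: one needs quantitative negligibility of late-born, early-born-but-fit, and unfit families (the analogues of Lemmas~\ref{lem:lem6_3}--\ref{lem:fit_fam}), obtained from the union bound with the conditional exponential tail in \textbf{(A4)} integrated against the regularly varying fitness tail in \textbf{(B5)}; the moment bound $\E[\xi^{\lambda/\gamma}]<\infty$ alone does not deliver these estimates.
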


\footnotetext{ {\peter We say that a sequence of Radon measures $(\mu_n)_{n \in \mathbb{N}}$ on 
a locally compact Polish space $\mathbb X$ converges \emph{vaguely} to $\mu$ 
iff
$		\int f \di \mu_n \rightarrow \int f \di \mu,$ as $n \rightarrow \infty$,
for all continuous functions $f \colon \mathbb X \rightarrow \R$ with compact support.  This makes the space of Radon measures itself a Polish space, and if 
$(\mu_n)_{n\geq 0}$ is a sequence of {\it random} measures in this space we say that it converges vaguely in distribution to a random Radon
measure $\mu$ iff for all continuous bounded functions~$F$ on this space, the sequence $(\E F(\mu_n))_{n\geq 0}$ converges to $\E F(\mu)$ as~$n \rightarrow \infty$.
By the Portmanteau theorem the convergence also holds for bounded functions $F$ that are continuous at \emph{almost every} $\mu$.}
}

Observe that the compactification of the intervals in Theorem~1 ensures that the point with the largest $z$-component in the Poisson process corresponds asymptotically to the family of maximal size. 
Theorem~\ref{theo:22} therefore implies the following distributional limits (denoted by $\Rightarrow$) for the size, fitness and the foundation time of the largest family. 
{Note that the {\peter open bracket} in the third coordinate of the domain on which the point process converge is crucial, as the domain of convergence cannot be extended to $[-\infty, \infty]\times [-\infty, \infty]\times [0,\infty]$.}

\begin{cor}[Limits of family characteristics]\ \\[-5mm]
	\begin{enumerate}[(i)]
		\item  {\peter As $t \rightarrow \infty$, we have}
			\begin{equation*}
				\e^{-\gamma t + \frac{\gamma\alpha}{\lambda} \log t + \gamma T } 
\max_{n \in \N} Z_n(t)\Rightarrow W,
			\end{equation*}
			and $W$ is Fr\'echet distributed with  shape parameter $\nicefrac\lambda\gamma$ and scale parameter $$s=\big(\Gamma(\alpha + 1) \lambda^{-\alpha}   \E \big[\xi^{\frac{\lambda}{\gamma}}\big]\big)^{\frac{\gamma}{\lambda}}.$$
\item Denoting by $V(t)$ the fitness of the family of maximal size at time $t$, as $t \rightarrow \infty$, we have 
		\begin{equation*}
			t(1 - V(t)) \Rightarrow V,
		\end{equation*}
	where $V$ is Gamma distributed with shape parameter~$\alpha$ and scale parameter~$\lambda$. 
		\item Denoting by $S(t)$ the birth time of the family of maximal size at time $t$, as $t \rightarrow \infty$, we have
			\begin{equation*}
				S(t) - \sigma_t \Rightarrow U,
			\end{equation*}
			where $U$ is a real valued random variable. 
	\end{enumerate}
	\label{cor:23}
\end{cor}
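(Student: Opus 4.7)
The plan is to deduce the three limit theorems from Theorem~\ref{theo:22} by applying the continuous mapping theorem to functionals that read off the top point of a point configuration. First, let $n^*(t)$ denote the index of the largest family at time $t$, so that $Z_{n^*(t)}(t) = \max_{1 \le n \le M(t)} Z_n(t)$. The three quantities of interest are then the three coordinates of a single point of $\Gamma_t$, namely
\begin{equation*}
\bigl(\tau_{n^*(t)} - \sigma_t,\ t(1-F_{n^*(t)}),\ \e^{-\gamma(t-\sigma_t)} Z_{n^*(t)}(t)\bigr) = \bigl(S(t)-\sigma_t,\ t(1-V(t)),\ \e^{-\gamma(t-\sigma_t)}\max_n Z_n(t)\bigr).
\end{equation*}
Hence it suffices to establish joint convergence of the top $z$-coordinate of $\Gamma_t$ together with the $s$- and $f$-coordinates of the maximizing point.

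Second, I introduce functionals $\Psi_1, \Psi_2, \Psi_3$ on the space of locally finite Radon measures on $[-\infty,\infty]\times[0,\infty]\times(0,\infty]$ that extract the maximum $z$-coordinate and the $f$- and $s$-coordinates of the maximizing point respectively. Since the intensity $\zeta$ is absolutely continuous and (as the computation below will show) places finite mass on $\{z>z_0\}$ for every $z_0>0$, the limit Poisson process has almost surely finitely many points above any positive level, a unique maximizer among them, and this maximizer lies in the open interior $(-\infty,\infty)\times(0,\infty)\times(0,\infty)$ because $\zeta$ does not charge the compactification boundary. Therefore $\Psi_1,\Psi_2,\Psi_3$ are almost surely continuous at the limit, and the continuous mapping theorem, applied to vague convergence on $[-\infty,\infty]\times[0,\infty]\times[\varepsilon,\infty]$ followed by $\varepsilon\downarrow 0$, delivers joint convergence in distribution.

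Third, I identify the limit distributions directly from $\zeta$. For part~(i) the void probability formula gives $\pr(\Psi_1\le z_0)=\exp(-\zeta(\{z>z_0\}))$. The substitution $u = z\e^{\gamma(s+f)}$ in the inner $z$-integral converts the density into $\pr(\xi>z_0\e^{\gamma(s+f)})$; conditioning on $\xi$ turns the remaining constraint into $s+f<L$ with $L:=\gamma^{-1}\log(\xi/z_0)$, and the $s$- and $f$-integrals evaluate in closed form to $(\xi/z_0)^{\lambda/\gamma}\,\Gamma(\alpha+1)\lambda^{-\alpha}$. Taking expectation in $\xi$ yields $\zeta(\{z>z_0\})=z_0^{-\lambda/\gamma}\Gamma(\alpha+1)\lambda^{-\alpha}\E[\xi^{\lambda/\gamma}]$, which is exactly the Fr\'echet tail with the stated parameters (finiteness of the scale uses assumption~\textbf{(A3)}). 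The scaling factor $\e^{-\gamma(t-\sigma_t)}$ is then rewritten as $\e^{-\gamma t+(\gamma\alpha/\lambda)\log t+\gamma T+o(1)}$ using the asymptotic expansion of $\sigma_t$ derived from~\textbf{(B5)} combined with~\textbf{(A1)}.

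For parts~(ii) and~(iii), I exploit that restrictions of a Poisson process to disjoint strips in the $f$-direction (respectively $s$-direction) are independent, so the maximizer lies above level $f_0$ in the $f$-coordinate if and only if the running maximum over $\{f>f_0\}$ exceeds the one over $\{f\le f_0\}$. A short computation using $\pr(M_A\le z_0)=\exp(-\zeta(A\times\{z>z_0\}))$ and the scale invariance shows
\begin{equation*}
\pr(\Psi_2>f_0)=\frac{\zeta(\{f>f_0,\ z>z_0\})}{\zeta(\{z>z_0\})}=\frac{\int_{f_0}^\infty \alpha f^{\alpha-1}\e^{-\lambda f}\,\di f}{\Gamma(\alpha+1)\lambda^{-\alpha}},
\end{equation*}
the right-hand side being independent of $z_0$, which identifies $\Psi_2$ as having density $\lambda^\alpha f^{\alpha-1}\e^{-\lambda f}/\Gamma(\alpha)$, i.e.\ the Gamma distribution with shape~$\alpha$ and scale~$\lambda$. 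The same independence argument yields that $\Psi_3$ is almost surely finite and hence a bona fide real-valued random variable, which is all that is claimed in~(iii). The main technical obstacle is the continuity step: one must justify that the projection-to-maximizer functional is continuous at the Poisson limit even though the $s$- and $f$-coordinates of the maximizer are a priori unbounded, and this is precisely why Theorem~\ref{theo:22} is formulated on the partially compactified space with absolutely continuous intensity vanishing at the boundary.
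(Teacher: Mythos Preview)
Your proposal is correct and follows essentially the same approach the paper uses for the analogous Corollary~\ref{cor:lim_fam} (the proof of Corollary~\ref{cor:23} itself is deferred to~\cite{Senkevich}). In both cases one reads off the maximiser from the compactified Poisson limit: part~(i) is obtained by computing $\zeta([-\infty,\infty]\times[0,\infty]\times[z_0,\infty])$ via the substitution $u=z\e^{\gamma(s+f)}$ and recognising the Fr\'echet tail, then rewriting $\e^{-\gamma(t-\sigma_t)}$ using the expansion of $\sigma_t$; parts~(ii)--(iii) identify the law of the $f$- and $s$-coordinates of the maximising point. The only stylistic difference is that for~(ii) the paper (in the Gumbel analogue) writes the indicator $\id\{\cdot\}$ explicitly as an integral against $\Gamma_t$ and computes the limiting density by integrating $\e^{-\zeta(\{z'>z\})}\,\di\zeta$, whereas you use the equivalent and slightly slicker route via independence of Poisson restrictions and the Fr\'echet ratio formula $\pr(M_+>M_-)=c_+/(c_++c_-)$; both give the Gamma density $\lambda^\alpha f^{\alpha-1}\e^{-\lambda f}/\Gamma(\alpha)$.
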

{\cec
\begin{note}
By Theorem~\ref{theo:22}, we know that, with high probability as $t\to\infty$, the family that realises $\max_{n\geq 1} Z_n(t)$ is unique, and thus the definitions of $S(t)$ and $V(t)$ are not ambiguous. 
\end{note}}

{\peter The proofs of Theorem~\ref{theo:22} and Corollary~\ref{cor:23} are carried out in the third author's PhD thesis~\cite{Senkevich} and can be found at the online respository linked in the bibliography to item~\cite{Senkevich}. They are not repeated here to limit the length of this paper. The proofs use ideas analogous to those in the present paper, but the execution of these ideas is much simpler. A similar result is contained in \cite{Main} in the less general context of reinforced branching processes (see Section~\ref{exa:RBP} for details about these processes) using methods that can neither be generalised to the broader class of models considered here, nor at all to the Gumbel case.}
\bigskip


To now state \emph{our main technical result} we look at fitness distributions satisfying Assumption~{\bf (A5)}. For all $t \geq 0$, we define 
\begin{equation}
	\Gamma_t = \sum^{M(t)}_{n=1} \delta\Big( \frac{\tau_n - \sigma_t } {\sqrt{\sigma_t }}, \frac{F_n - g \big(\log(n \sqrt{\sigma_t }) \big)}{g'\big(\log(n \sqrt{\sigma_t }) \big)}, \e^{- \gamma g(\lambda \sigma_t)(t-\sigma_t )- a_1 g(\lambda \sigma_t) \log \sigma_t + \gamma T} Z_n(t) \Big), 
	\label{equ:ppg}
\end{equation}
where $\delta(x)$ is the Dirac mass at $x$, and $ a_1  : =  \frac{ \gamma}{2 \lambda}$.
{\cec Note that, by definition (see Equation~\eqref{equ:sigma}), $\sigma_t\geq 1$ almost surely, and thus $\log(n \sqrt{\sigma_t })\geq 0$ lies in the domain of definition of $g$ for all $t\geq 0$ and $n\geq 1$, implying that $\Gamma_t$ is well defined for all $t\geq 0$.}
\begin{theo}[Poisson limit]Under Assumptions {\bf (A1-5)}, {\cec as $t\to\infty$,} the point process $(\Gamma_t)_{t \ge 0}$ converges vaguely  {in distribution} on the space $[-\infty, \infty] \times [-\infty, \infty] \times (0, \infty]$ to the Poisson point process with {\peter locally finite} intensity measure 
	\begin{equation*}
		\di \zeta(s, \: f, \: z) = \lambda \e^{-f} \e^{s^2 a_2 - f a_3} \nu (z\e^{s^2 a_2 -f a_3}) \, \di s \: \df \: \di z,
	\end{equation*} 
where $ a_2 : = \frac{\gamma}{2} \varkappa$, $a_3 : = \frac{\gamma}{\lambda}$ and $\nu$ is as in (A.3). 
	\label{theo:theo_main}
\end{theo}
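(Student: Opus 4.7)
My plan is to apply a Kallenberg-type criterion for Poisson convergence on the locally compact Polish space $[-\infty,\infty]\times[-\infty,\infty]\times(0,\infty]$, which reduces vague convergence in distribution of $(\Gamma_t)$ to the Poisson process with intensity $\zeta$ to verifying, for each relatively compact rectangle $A=(s_1,s_2)\times(f_1,f_2)\times(z_1,\infty]$ with $\zeta(\partial A)=0$, the two conditions $\E[\Gamma_t(A)]\to\zeta(A)$ and $\pr(\Gamma_t(A)=0)\to e^{-\zeta(A)}$. Such rectangles form a convergence-determining class for the chosen partial compactification, and the lower cut-off $z_1>0$ together with bounded $(s_1,s_2)$ and $(f_1,f_2)$ is essential to keep $\zeta(A)$ finite.

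Three preliminary reductions pave the way. First, Assumption~\textbf{(A1)} allows me to replace $\tau_n$ by $\tau_n^*+T$ up to a vanishing error, and forces the indices that can contribute to a point in $A$ to lie in the window $I(t)$ for $\kappa$ exceeding $\max(|s_1|,|s_2|)$; this is precisely the range in which Assumption~\textbf{(A2)} gives an effective coupling $X_n\approx Y_n$, rendering the $Z_n(t)$ conditionally independent given $(F_n)$ up to negligible errors. Second, Assumption~\textbf{(A3)} lets me write $Y_n(F_n(t-\tau_n))=\xi_n\,e^{\gamma F_n(t-\tau_n)}(1+o(1))$ with $\xi_n$ i.i.d.\ of density $\nu$ and independent of the $(F_n)$. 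Third, Assumption~\textbf{(A4)} supplies the exponential tail bound on $X_n(u)e^{-\gamma u}$ that justifies truncating contributions from families with very large $\xi_n$, so only families whose third coordinate falls in a bounded region of $(z_1,\infty]$ survive.

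The analytic heart of the argument is the intensity computation. After the reductions, for $n\in I(t)$ the third coordinate equals $\xi_n e^{E_n(t)}$ up to $o(1)$, where $E_n(t)$ is a deterministic function of the first two rescaled coordinates $(s_n,f_n)$, of $\sigma_t$ and of $n$. I expand $g$ to second order around $\lambda\sigma_t$, exploiting the defining relation $g'(\lambda\sigma_t)=g(\lambda\sigma_t)/(\lambda(t-\sigma_t))$ from \eqref{equ:sigma} together with the asymptotic $g''(\lambda\sigma_t)\sim -\varkappa/(\lambda^2\sigma_t(t-\sigma_t))$, which follows from \textbf{(A5.3)} via the identity $g''=-m''/(m')^3$. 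The first-order contribution to $E_n(t)$, combined with the subtraction $-a_1 g(\lambda\sigma_t)\log\sigma_t$ in the definition of the third coordinate, cancels the $\tfrac12\log\sigma_t$ arising in $u-\lambda\sigma_t=\lambda s_n\sqrt{\sigma_t}+\tfrac12\log\sigma_t$; the quadratic second-order contribution simplifies to exactly $-a_2 s_n^2$; and the linear contribution $\gamma f g'(u)(t-\sigma_t)$ reduces, via the same defining relation and $g(\lambda\sigma_t)\to 1$, to $+a_3 f_n$. Hence $\hat Z_n(t)\approx\xi_n e^{-a_2 s_n^2+a_3 f_n}$. Combining this with the Gumbel-type computation $\mu(g(u)+fg'(u),1]\sim e^{-u-f}$, immediate from $g=m^{-1}$ and \textbf{(A5.2)}, and the Riemann-sum asymptotic that converts $\sum_{n\in I(t)}(n\sqrt{\sigma_t})^{-1}\,\mathrm{d}s$ into $\lambda\,\mathrm{d}s$, I obtain the joint density claimed in the theorem by a direct change of variables integrating $\nu$ against the conditional distribution of $\xi_n e^{-a_2 s^2+a_3 f}$.

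The void probability then follows by conditioning on $(F_n)_{n\in I(t)}$: the asymptotic conditional independence from \textbf{(A2)} factorises $\pr(\Gamma_t(A)=0\mid F)$ over $n$, and $\log(1-x)=-x+O(x^2)$ converts this into $\exp(-\E[\Gamma_t(A)\mid F])\cdot(1+o(1))$, yielding $\pr(\Gamma_t(A)=0)\to e^{-\zeta(A)}$. I expect the main obstacle to be the uniform validity of the second-order Taylor expansion of $g$ over the full window $I(t)$, whose width is of order $\sqrt{\sigma_t}$: the cubic remainder must be shown to vanish, which requires a careful estimate of $g'''$ in terms of $g'$ and $g''$ using \textbf{(A5.1)--(A5.4)}, and the cancellation of the first-order logarithmic correction with $-a_1 g(\lambda\sigma_t)\log\sigma_t$ must be established with error $o(1)$. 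A secondary difficulty is the exchange of the lower truncation $z>z_1$ with the limit in the integration against $\nu$, for which the moment bound $\E[\xi^{\lambda/\gamma}]<\infty$ in \textbf{(A3)} and the tail estimate of \textbf{(A4)} ensure that no mass is lost.
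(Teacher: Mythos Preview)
Your analytic core—the Taylor expansion of $g$ around $\lambda\sigma_t$ yielding $E_n(t)\approx -a_2 s_n^2+a_3 f_n$, the Gumbel-type asymptotic $\mu(g(u)+fg'(u),1]\sim e^{-u-f}$, and the Riemann-sum conversion of the sum over $n$—is essentially what the paper does in Section~3 to obtain convergence of $\Gamma_t$ on the \emph{non}-compactified space $(-\infty,\infty)\times(-\infty,\infty)\times[0,\infty]$ (their Proposition~\ref{prop:cv_gamma}). So that part of your plan is correct.

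The genuine gap is your claim that rectangles $(s_1,s_2)\times(f_1,f_2)\times(z_1,\infty]$ with \emph{bounded} $s_i,f_i$ form a convergence-determining class on the compactified space $[-\infty,\infty]\times[-\infty,\infty]\times(0,\infty]$. They do not: on this space the points $\pm\infty$ are interior to the first two factors, so relatively compact open sets include boxes like $[-\infty,\infty]\times[-\infty,\infty]\times(z_1,\infty]$, and Kallenberg's criterion must be checked on these as well. Your reduction ``indices that can contribute to $A$ lie in $I(t)$'' is valid only when $s_1,s_2$ are finite; it says nothing about families with $|\tau_n-\sigma_t|/\sqrt{\sigma_t}$ arbitrarily large, and for those Assumption~\textbf{(A2)} is unavailable and the second-order Taylor expansion of $g$ is no longer uniform. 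Establishing vague convergence only on bounded rectangles gives convergence on the non-compactified space, which is too weak to extract the Fr\'echet limit for $\max_n Z_n(t)$ in Corollary~\ref{cor:lim_fam}.

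The paper addresses precisely this by a separate compactification argument (Section~4): three lemmas show, for each $\varepsilon>0$, that with high probability no point of $\Gamma_t$ with third coordinate exceeding $\varepsilon$ lies in $\{|s|>v\}$ (young/old families, via the concentration bound \textbf{(A4)} and an explicit computation of $\sum_n\pr(A_n\ge c_{n,t})$), in $\{f<-\kappa\}$ (unfit families, similarly), or in $\{|s|\le v,\,f>\kappa\}$ (no fit families above the window, a pure extreme-value estimate on the $F_n$). These bounds require integrating the tail of $A_n=\sup_u X_n(u)e^{-\gamma u}$ against the fitness law over all $n$, and the computation is not a consequence of anything in your outline. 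Without them the theorem as stated—convergence on the \emph{compactified} space—does not follow.
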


\begin{note}
The existence of a density for the random variable $\xi$ is assumed in {\bf (A3)} for convenience. For example,
Theorems~\ref{theo:22} and~\ref{theo:theo_main} continue to hold if $\nu=\delta_1$ as in our motivating example. 
\end{note}

The technical difference between Theorems~\ref{theo:22} and~\ref{theo:theo_main} is that in the latter the 
first (birthtime) coordinate needs to be scaled. As a result the scaling of the second (fitness) component depends on the birth rank $n$ of the family as well as on the observation time $t$. Therefore we cannot derive a general scaling limit for the fitness of the largest family as in Corollary~\ref{cor:23}. Results for the birth time and size of this family, however, are still possible.

\begin{cor}[Limits of family characteristics]{}
\ \\[-4mm]
	\begin{enumerate}[(i)]
		\item {\peter As $t \rightarrow \infty$, we have}
			\begin{equation*}
				\e^{-\gamma g(\lambda \sigma_t)(t- \sigma_t ) - a_1 g(\lambda \sigma_t) \log \sigma_t + \gamma T} \max_{n \in \N} Z_n(t) \Rightarrow W, 
			\end{equation*}
		where $W$ is Fr\'echet distributed with shape parameter $\nicefrac\lambda\gamma$ and scale parameter 
		$$s=\big(\sqrt{\sfrac{2 \pi\lambda }{\varkappa}}  \E \big[\xi^{\frac{\lambda}{\gamma}}\big]\big)^{\frac{\gamma}{\lambda}}.$$
		\item Denoting by $S(t)$ the birth time of the family of maximal size at time $t$, as $t \rightarrow \infty$, we have
			\begin{equation*}
				\frac{S(t) - \sigma_t } {\sqrt{\sigma_t }} \Rightarrow U,
			\end{equation*}
		where $U$ is normally-distributed with mean $0$ and variance $\frac{1}{\lambda \varkappa}$. 
	\end{enumerate}
	\label{cor:lim_fam}
\end{cor}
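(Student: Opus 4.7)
Both parts will follow from Theorem~\ref{theo:theo_main} by locating, in the limiting Poisson point process $\Pi$ with intensity measure $\zeta$, the point $(S^*, F^*, Z^*)$ with the largest third coordinate. The plan is to show that the quantities on the left in (i) and (ii) coincide, for each $t$, with the third and first coordinates of the argmax-in-$z$ of $\Gamma_t$, and that these converge jointly in distribution to $(Z^*, S^*)$. The enabling observation is that for every $w > 0$ the set $A_w := [-\infty,\infty] \times [-\infty,\infty] \times [w,\infty]$ is compact in the space of Theorem~\ref{theo:theo_main} and, by the calculation in the next paragraph, $\zeta(A_w) < \infty$; hence $\Pi \cap A_w$ is almost surely a finite set, the argmax in $z$ is almost surely unique (the intensity is absolutely continuous in $z$, so no two points share a $z$-value), and a Portmanteau argument combined with the vague convergence of $\Gamma_t$ delivers joint convergence of the $\Gamma_t$-points inside $A_w$ together with their locations.

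For (i), since $\Pi$ is Poisson,
\[
\pr(Z^* \le w) \;=\; \pr(\Pi(A_w) = 0) \;=\; \exp(-\zeta(A_w)).
\]
To evaluate $\zeta(A_w)$, I would first substitute $u = z\e^{s^2 a_2 - f a_3}$ in the innermost $z$-integral, turning it into the tail $\pr(\xi > w\e^{s^2 a_2 - f a_3})$; writing this as an expectation and exchanging it with the $f$-integral reduces the $f$-integral to an elementary exponential over a threshold in $f$. Using $1/a_3 = \lambda/\gamma$ and $a_2/a_3 = \lambda\varkappa/2$, the integrand collapses to $\lambda\E[\xi^{\lambda/\gamma}] w^{-\lambda/\gamma} \e^{-s^2 \lambda\varkappa/2}$, and the remaining Gaussian $s$-integral yields $\zeta(A_w) = \sqrt{2\pi\lambda/\varkappa}\,\E[\xi^{\lambda/\gamma}]\,w^{-\lambda/\gamma}$, which is exactly the Fr\'echet law stated in (i).

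For (ii), the marginal density of $S^*$ at $s \in \R$ will be computed from the standard intensity-times-void-probability representation of the joint density of $(S^*, F^*, Z^*)$,
\[
h(s) \;=\; \int_0^\infty \!\!\int_{-\infty}^\infty \lambda \e^{-f} \e^{s^2 a_2 - f a_3}\, \nu\!\left(z\e^{s^2 a_2 - f a_3}\right) \e^{-\zeta(A_z)}\, \df\, \dz.
\]
Applying the same substitution as in (i), together with $\beta a_3 = 1$ and $\beta a_2 = \lambda\varkappa/2$ for $\beta := \lambda/\gamma$, factorises $\zeta(A_z)$ as $C u^{-\beta} \e^{s^2\lambda\varkappa/2} \e^{-f}$ where $C$ is the constant from (i). The $f$-integral then takes the Gumbel form $\int_{-\infty}^\infty \lambda\e^{-f} \exp(-a\e^{-f})\,\df = \lambda/a$ (via $v = \e^{-f}$), after which the $u$-integral reduces to $\E[\xi^\beta]$ and cancels the corresponding factor hidden in $C$. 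What survives is $\sqrt{\lambda\varkappa/(2\pi)} \exp(-s^2 \lambda\varkappa/2)$, precisely the density of a normal distribution with mean $0$ and variance $1/(\lambda\varkappa)$.

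The only genuine obstacle is the very first step: passing from vague convergence of $\Gamma_t$ to convergence of the argmax-in-$z$ itself. The argmax functional is not continuous on the full compactified space, since points could in principle escape through $z = 0$ or through $|s|, |f| \to \infty$. However, the compactification in Theorem~\ref{theo:theo_main} deliberately keeps $z = 0$ open (the third coordinate lives in $(0, \infty]$), and the finiteness of $\zeta(A_w)$ for every $w > 0$ rules out escape through the other boundaries. These two facts together render the functional almost surely continuous at $\Pi$, so the Portmanteau theorem applies; everything else above is direct computation.
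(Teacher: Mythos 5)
Your proposal is correct and follows essentially the same route as the paper: both parts are deduced from Theorem~\ref{theo:theo_main} by exploiting that $[-\infty,\infty]\times[-\infty,\infty]\times[w,\infty]$ is compact in the partially compactified space, computing the void probability $\exp(-\zeta(A_w))$ for the maximum in (i), and using the intensity-times-void-probability representation (with the same substitution $u=z\e^{s^2a_2-fa_3}$ and the Gumbel-type $f$-integral) for the birth-time marginal in (ii). Your explicit discussion of the a.e.\ continuity of the argmax functional at the limiting Poisson process is exactly the justification the paper invokes implicitly via the Portmanteau theorem, so there is no substantive difference.
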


\begin{note}
Observe that irrespective of whether $\mu$ is in the maximum domain of attraction of the Weibull or Gumbel
distribution, the size of the largest family scaled by a deterministic function of time and the random factor $\e^{\gamma T}$ converges to a Fr\'echet distribution.
\end{note}

{\cec The intuition behind Theorem~\ref{theo:theo_main} (see Sections~\ref{sec:local_convergence} and~\ref{sec:compactification} for the proof, and Figure~\ref{fig} for a visual aid) is that the only families that have a chance to be the largest at (large) time~$t$ are the ones born at time $\sigma_t \pm \mathcal O(\sigma_t)$ and whose fitness is of order $g(\lambda\sigma_t)$.} {\peter This fixes a ``moving window" in which we have to look for the representation of the largest family by its birth time and fitness.
We prove in Section~\ref{sec:local_convergence} that the point process $\Gamma_t$ restricted to the moving window converges to a Poisson point process, and, in Section~~\ref{sec:compactification}, that the probability that a family outside the window is largest converges to zero.}

\setlength{\unitlength}{1cm}

\begin{figure}
\begin{center}
\begin{picture}(12,7)
\thicklines
\put(2,0.5){\vector(0,1){5.5}} 
\put(1.5,1){\vector(1,0){11}} 

\put(1.6,0.5){$0$}
\put(1.6,4.9){$1$}

\put(2,5){\line(1,0){9}}
\put(11,5){\line(0,-1){4}}

\qbezier(4.7,3.5)(5.9,4)(7.1,4.0) 
{\color{lightgray} 
\linethickness{1mm}
\qbezier(4.75,3.7)(5.9,4.2)(7.05,4.2) 
\qbezier(4.75,3.6)(5.9,4.05)(7.05,4.075) 
\qbezier(4.75,3.7)(5.7, 4.25)(7.05,4.55) 
\linethickness{2mm}
\qbezier(4.80,3.7)(5.7, 4.2)(7.0,4.4) 
\qbezier(4.80,3.7)(5.7, 4.2)(7.0,4.3) 
\qbezier(4.80,3.7)(5.7, 4.2)(7.0,4.2) 
}
\qbezier(2, 2)(5.7,5.2)(11,4.9) 

\put(4.7,3.5){\line(0,1){0.28}} 
\put(7.1,4.0){\line(0,1){0.6}} 

\put(4.7,3.5){\line(0,-1){2.7}} 
\put(7.1,4.1){\line(0,-1){3.3}} 

\put(5.9,3.4){\vector(1,0){1.2}} 
\put(5.9,3.4){\vector(-1,0){1.2}} 


 \multiput(6,4.2)(-0.8,0){6}{\line(1,0){0.16}} 
 \multiput(6.08,4.02)(0,-0.7){5}{\line(0,-1){0.16}} 

\put(11,0.5){$t$}
\put(6,0.5){$\sigma_t$}
\put(0.65,4.1){\footnotesize $\approx g(\lambda \sigma_t)$}
\put(0.5,5.5){ Fitness}
\put(11.5,0.5){Time}

\put(2.5,4.5){\footnotesize No families (Section \ref{sec:old})}
\put(4.9,2.7){\footnotesize Unfit families}
\put(5.0,2.3){\footnotesize (Section \ref{sec:unfit})}
\put(8.1,4.3){\footnotesize Young families}
\put(8.3, 3.9){\footnotesize (Section \ref{sec:young})}
\put(2.7,2){\footnotesize Old families}
\put(2.65,1.6){\footnotesize  (Section \ref{sec:young})}

\put(5.5,3.53){\footnotesize $\mathcal{O}(\sqrt{\sigma_t})$}

\put(6.95,4.45){\circle*{0.12}}
\put(6.55,4.2){\circle*{0.12}} 
\put(5.91,4.07){\circle*{0.12}} 
\put(6.21,4.02){\circle*{0.12}}
\put(5.16,3.93){\circle*{0.12}} 
\put(5.1,3.81){\circle*{0.12}} 
\end{picture}
\end{center}
\caption{\cec A graphical representation of  the proof of Theorem~\ref{theo:theo_main}. {\peter Families are represented by their birth time and fitness.}
The largest family at large time~$t$ is most likely born at time $\sigma_t\pm \mathcal O(\sqrt{\sigma_t})$, and has fitness of order $g(\lambda\sigma_t)$. 
{\peter Loosely speaking, families that are too old are not fit enough to be large enough, families that are too young have not had sufficient time to grow, and 
families with a small fitness grow too slowly to compete. Only families in the shaded window appear in the limiting Poisson point process and compete to be the largest.}}
\label{fig}
\end{figure}
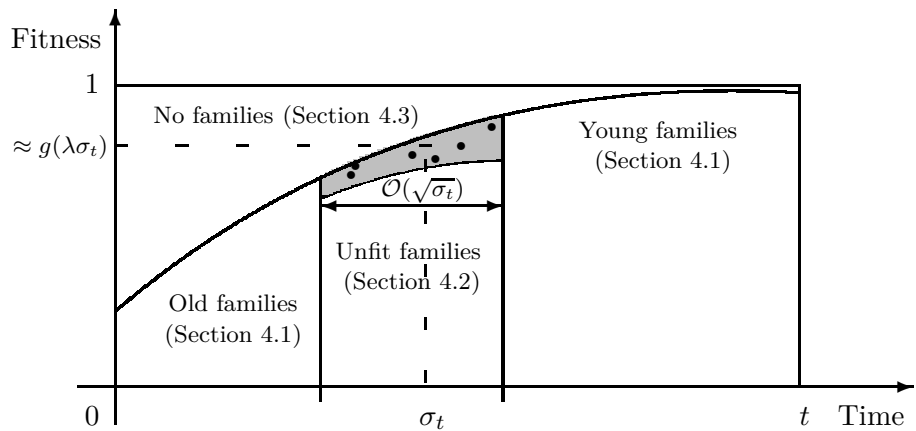


\subsection{Examples of fitness distributions}
\label{sec:f}
The five following functions $m(x) = - \log \mu(x,1)$, {\cec defined for all $x\in[0,1)$}, satisfy Assumption {\bf (A5)}:
\begin{enumerate}[(1)]
	\item	$m(x) = (1-x)^{-\varrho} -1$, where $\varrho > 0$; \label{exa:f1}
	\item $m(x) = \e^{\frac{1}{1-x}} - \e$;
	\item $m(x) = \frac{x}{1 - x}$;
	\item $m(x) = \e^{\frac{1}{\sqrt{1 - x}}} - \e$;
	\item $m(x) = \tan \big( \frac{\pi x}{2}\big)$.
\end{enumerate}	

Assumptions {\bf (A5.1)} and {\bf (A5.2)} imply that the fitness distribution $\mu$ lies in the maximum domain of attraction of the Gumbel distribution, see \cite[ch. 3.3.3]{MEE}. 
%
Although most of the natural examples satisfy Assumptions {\bf (A5.3)} and {\bf (A5.4)}, some probability distributions in the maximum domain of attraction of the Gumbel distribution do not fall into our framework,
for example\begin{enumerate}[(6)]
	\item
		$m(x) =  \log \Big( \frac{\e}{1-x}\Big) \log \log \Big( \frac{\e}{1-x} \Big),$
	\end{enumerate}
see \cite{Senkevich, Resnick} for details. 
\pagebreak[3]

\section{Examples and applications}
\label{sec:exa}

In this section we present a selection of examples covered by our main results. We emphasise that our framework goes well beyond the setup of reinforced branching processes treated in \cite{Main} and also that we pick only a small number of representative results out of a wealth of consequences that we can draw from Theorem~3 and Corollary~4. 
%

\subsection{Branching processes with selection and mutation}
\label{exa:RBP}

\subsubsection{\cec A simple selection and mutation model}\label{sub:Kingman}
{\peter Our first example is the model of a population evolving by selection and mutation mentioned at the beginning of Section~\ref{sec:intro}, which we embed into continuous
time as follows.} We start with one individual with genetic fitness sampled from $\mu$. Individuals never die and give birth
at a rate given by their fitness to an independent random number of offspring. Note that variations in individual fitness lead to a selection effect: an individual born at time $t$ selects its parent from the population alive at time $t$ with a probability proportional to their fitness.  At birth each individual independently either inherits the parent's fitness or, with probability $0<\beta<1$, is a mutant getting a fitness sampled from $\mu$ independently of everything else. Similar to the deterministic 
Kingman's model \cite{Kingman2, Kingman1} at mutation all genetic information from a particle's ancestry  is lost. For a discussion of the relevance of these models in the theory of evolution see \cite{Evolution}. 
\smallskip

\pagebreak[3]
In our framework the non-decreasing sequence of birth times $\tau_1, \tau_2, \ldots$ of mutants 
constitute the foundation times of new families, their fitnesses are $F_1, F_2,\ldots$ and $Z_n(t)$ is the
number of non-mutant offspring of the $n$th mutant at time $t$. If $(p_k)_{k\geq 1}$ is the distribution of offspring numbers at a birth event denote by $\emm=\sum_{k\geq 1} k p_k$ the mean offspring number and assume that 
{\peter $(p_k)_{k\geq 1}$} has finite support.
We assume that mutations have a reasonable chance to produce fit individuals, as expressed in the Malthusian condition 
\begin{equation*}
\beta \int_0^1 \frac{\di \mu (x)}{1 - x}  > 1.
\end{equation*}
Under this condition there is a unique solution $\lambda>(1-\beta)\emm$ of the equation
\begin{equation*}
\beta \emm \int_0^1 \frac{x}{\lambda - (1-\beta)\emm x} \, \di \mu (x) = 1.
\end{equation*}
In Section~\ref{sub:GRBP}, we show that {\bf (A1-4)} are satisfied  with $\gamma = (1-\beta) \emm$. If $p_1=1$ this is a reinforced branching process as studied in \cite{Main}. The generalisation to arbitrary offspring distribution is not difficult (see Section~\ref{sub:GRBP} for details). As an example of the limit theorems implied by our main result, we look at the birth time $S(t)$ of the largest family at time $t$ in the case of Gnedenko's distribution (Example (3) in Section 1.3)
$$\mu(x,1)=\e^{-\frac{x}{1-x}}, \qquad \mbox{ for } {\cec x\in [0,1)},$$
see \cite[Exemple~2]{Gnedenko}. We find a leading order term for $S(t)$ of
$$\sigma_t=\frac1{\lambda}\big( \sqrt{\lambda t+1}-1\big)$$
and $\varkappa=2$. Corollary~4 therefore gives a central limit theorem of the form
$$\frac{S(t)-\sqrt{t/\lambda }}{\sqrt[4]{t/\lambda}} \to {\mathcal N}\big(0,(2\lambda)^{-1}\big) \text{ in distribution as }t\to\infty.$$

\subsubsection{\cec General reinforced branching processes}\label{sub:GRBP}
We now give a general construction for the reinforced branching process where at a birth event, {\cec for all $i,j\in\{0, 1, \ldots\}$,} with probability $p_{ij}$ we create $i$ 
new offspring of the same family and $j$ new families. {\peter We assume $p_{00}=0$ and denote  the first and second marginal by $(p^{\ssup 1}_i)$ and $(p^{\ssup 2}_j)$,   with positive and finite means  $m^{\ssup 1}$ and $m^{\ssup 2}$, respectively. Hence, as individuals are immortal, the branching process is supercritical.  We also assume that the first marginal is bounded, that is, it has finite support.} We can construct the model on an explicit probability space. Let
	\begin{itemize}
		\item $F$ be a $\mu$-distributed random variable,
		\item independently of $F$ construct a continuous time jump process $Y=(Y(t):t \ge 0)$ as follows
\begin{itemize}
\item start at time $0$ in state $Y(0)=1$,
\item if $Y$ is in state $k\in\N$ the next jump event follows at rate $k$,
\item let $0<t_1<t_2<t_3< \ldots$ be the increasing sequence of times at which jump events happen,
\item at  jump time $t_n$ sample a pair $(J_n,L_n) \in\N_0 \times \N_0$ 
from $(p_{ij})$ and 
increase $(Y(t):t \ge 0)$ by $J_n$ (which may be zero), {\cec i.e.\ set $Y(t)= Y(t_{n-1})+J_n$ for all $t\in [t_n, t_{n+1})$}.
\end{itemize}
\item given the above let $\Pi = (\Pi(t) : t \ge 0)$ be the jump process which has a jump of height $L_n$ (which may be zero) at time $t_n$.
	\end{itemize}
{\cec We let $((F_n, Y_n, \Pi_n))_{n\geq 1}$ be a sequence of i.i.d.\ copies of $(F,Y,\Pi)$.}
The process $(Y_n(t) \colon t\ge 0)$ describes the
creation of new family members, and the process $(\Pi_n(t) \colon t\ge 0)$ the creation of new families 
descending from
the $n$th family (in a standardised time-scale). To construct our original objects on this probability space we
let $\tau_1 = 0$ and $Z_1(t) = Y_1(F_1t)$ and, {\peter for $n\geq 2$ and $\tau_1, \ldots, \tau_{n-1}$ already constructed}, iteratively define {\cec (recall that, for all $t\geq 0$ and $m\geq 1$, we denote by $\Delta\Pi_m(t) = \Pi_m(t)-\Pi_m(t-)$)}
	\begin{equation*}
		\tau_n=\inf\{t > \tau_{n-1} : \exists m \in \{1, ..., n-1\} \text{ with } \Delta \Pi_m(F_m(t - \tau_m)) >0\},
	\end{equation*}	
and if $\Delta \Pi_m(F_m(t - \tau_m))=k\ge 2$ also set
$\tau_{n+k-1}=\cdots =\tau_{n+1}=\tau_n$. {\peter For $j=0,\ldots, k-1$ let}
	\begin{equation*}
		Z_{\peter n+j}(t) = \begin{cases} Y_{\peter n+j}(F_{\peter n+j}(t-\tau_{\peter n+j})), \quad & \text{if } t \ge \tau_{\peter n+j} \\
		0, \quad & \text{otherwise.}
		\end{cases} 		
	\end{equation*}
We let $M(t) = \max\{n: \tau_n \le t\}$ and  $N(t) = \sum_{n=1}^{M(t)} Z_n(t)$.
Now $(Y_n(F_n(t - \tau_n)) \colon t\ge \tau_n)$ gives the sizes of the $n$th family, and $(\Pi_n(F_n(t - \tau_n)) \colon t \ge \tau_n)$ the times of creation of the new families which descend directly from the $n$th family.  This construction defines a reinforced branching process in a slightly more general way than in \cite{Main}. 
\medskip

We now check that reinforced branching processes with, for some $\eta'>0$,
\begin{equation}\label{eq:cond_pij}
\sum_{i,j{\cec \geq 0}} \e^{\eta' (i+j)} p_{ij} < \infty 
\end{equation}
satisfy Assumptions {\bf (A1-4)}.
The process $(M(t): t > 0)$ is a general branching process, also known as a Crump-Mode-Jagers process, with the laws of offspring times given by the random point process $(\Pi^*(t):t>0)$ given by $\Pi^*(t)=\Pi(Ft)$. Assuming that there exists $\lambda >0$, called \emph{Malthusian parameter}, such that
\begin{equation}\label{malthus}
\int^\infty_0 \e^{-\lambda s} \E \Pi^* (\di s) = 1,
\end{equation}
we can apply a strong law of large numbers by Nerman (see \cite{Nerman}) which shows that 
under an $x \log x$ condition on $\Pi^*$ there exists a positive, {\peter finite} random variable $W$, such that 
\begin{equation*}
\lim_{t \rightarrow \infty} \e^{-\lambda t} M(t) = W \quad \text{ almost surely.}
\end{equation*}
This gives us that $\log M(t)=\log W +\lambda t +o(1)$ almost surely {\cec as $t\uparrow\infty$}.
 {\peter Hence $\tau_n\uparrow\infty$ as $n\to\infty$ since $M(t)<\infty$ for all $t\geq 0$, otherwise $W$ would be infinite with non-zero probability.} 
 Plugging $t=\tau_n$ yields that
$\tau_n=\frac1\lambda \log n+T + \varepsilon_n$ for $T=- \frac1\lambda \log W$ and a sequence $(\varepsilon_n)$ converging to~$0$ almost surely.%
\medskip%

{\peter  Note that $(Y(t)\colon t > 0)$ is a supercritical continuous-time Galton-Watson process with $\mathbb E Y(t) = \mathrm e^{m^{\ssup 1}t}$, see also
Lemma~\ref{lem:Yule} below. Given $(Y(s)\colon s<t)$ the jump rate of  $\Pi$  at time $t$ is  $Y(t-)$ and when it jumps, its increment is distributed as $p^{\ssup 2}$, 
and thus we get that
$$\mathbb E\Pi^*(\mathrm ds) = \int_0^1m^{\ssup 2} {\cec f} \mathrm e^{m^{\ssup 1}fs}\mathrm d\mu(f)
{\cec \mathrm ds}.$$ 
Therefore,} the Malthusian condition~\eqref{malthus} reads as
\begin{align*}
1 & = \int^\infty_0 \e^{-\lambda s} \, \E \Pi^* (\di s) =  m^{\ssup 2} \int \frac{f}{\lambda-fm^{\ssup 1}} \, \mu(\mathrm df),
\end{align*}
which has a solution $\lambda>m^{\ssup 1}$ if and only if
$$m^{\ssup 2}\int_0^1 \frac{f}{1-f} \, \mu(\mathrm df)>m^{\ssup 1}.$$
The $x \log x$ condition  states that for the random variable $X= \int_0^\infty  \e^{-\lambda s} \Pi^*(ds)$ we have $\E X \log^+ X<\infty$. It is 
{straightforward} to check that under our assumption on the moments of $(p_{ij})$ we even have $\E X^2<\infty$ so that this condition and hence {\bf (A1)} holds. 
{\cec Indeed, we have
\begin{align*}
\mathbb E X^2
&=\int_0^{\infty}\int_0^{\infty} \mathrm e^{-\lambda(s+u)}\mathbb E[\Pi^*(\mathrm ds)\Pi^*(\mathrm du)] \\
&=\int_0^1\mathrm d\mu(f)\left(
\int_0^\infty\int_0^\infty (m^{\sss (2)})^2 f^2\mathrm e^{-(\lambda-m^{\sss (1)}f)(s+u)} \mathrm ds\mathrm du 
{\peter + \int_0^{\infty} \mathbb E[\zeta^2\, ] f\mathrm e^{-(2\lambda-m^{\sss (1)}f)s} \mathrm ds} \right),
\end{align*}
where {\peter $\zeta$ has the law $p^{\sss (2)}$.}
We thus get
\[\mathbb EX^2
=\int_0^1 \frac{f^2(m^{\sss (2)})^2}{(\lambda-m^{\sss (1)}f)^2}
+ \frac{f\mathbb E\zeta^2}{2\lambda-m^{\sss (1)}f}
\,\mathrm d\mu(f)
\leq \frac{(m^{\sss (2)})^2}{\lambda-m^{\sss (1)}} 
\int_0^1 \frac{f}{\lambda-m^{\sss (1)}f}\mathrm d\mu(f) + 
\mathbb E\zeta^2 \int_0^1\frac{f}{\lambda-m^{\sss (1)}f}\mathrm d\mu(f),
\]}%
{\peter where, for the first integral, we have used the fact that if $\lambda>m^{\sss (1)}$, then $\frac{f}{\lambda-m^{\sss (1)}f}$  is bounded by $1/(\lambda-m^{\sss (1)})$.
Since $\lambda$ is the Malthusian parameter, we get
\[\mathbb EX^2\leq \frac{(m^{\sss (2)})^2}{\lambda-m^{\sss (1)}} +\mathbb E\zeta^2<\infty,\]
because $\mathbb E\zeta^2<\infty$, by Equation~\eqref{eq:cond_pij}; 
this implies that the $x\log x$ condition is indeed satisfied.
} 
\smallskip

We let  $Y_n = X_n$ so that $\Delta_n(t) = 0$ {\cec for all $t\geq 0$}, so the convergence in Assumption~{\bf (A2)} is trivially satisfied. The process $(Y_n(t) \colon t\geq 0)$ is a continuous-time Galton-Watson process with offspring distribution {\peter $(p^{\ssup 1}_i)$, where the immortal individual itself is not counted as offspring,} and hence Assumptions {\bf (A3-4)} follow from Lemma~\ref{lem:Yule} below, parts~(c),(d) and~(e), respectively.

\begin{lem}[Galton-Watson process $(Y(t) \colon t\geq 0)$ with {\peter bounded} offspring distribution $(p^{\ssup 1}_i)$]\ \\
With $\gamma = m^{\ssup 1}$, we have
	\begin{enumerate}[(a)]
		\item $\E[Y(t)] = \e^{\gamma t}$.
		\item $(\e^{-\gamma t} Y(t))_{t\ge0}$ is a uniformly integrable martingale.
		\item The almost sure limit of $\lim_{t \rightarrow \infty} \e^{-\gamma t} Y(t)$ is an absolutely continuous random variable~{\peter$\xi$}.
		\item There exists $\eta>0$ such that  $\E \exp \{ \eta \xi \} <\infty$.
		\item There exists $c_0>0$ such that
		$\displaystyle
		\pr\big(\max_{t \ge 0} \e^{-\gamma t} Y(t)   \ge x \big) 
\leq c_0  \, \e^{-\eta x},$ for all $x\geq 0$.
	\end{enumerate}
	\label{lem:Yule}
\end{lem}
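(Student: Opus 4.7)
The plan is to establish the five parts in order using standard continuous-time Galton--Watson tools. For (a), I apply the infinitesimal generator $\mathcal L f(k)= k\,\E[f(k+J)-f(k)]$ (with $J\sim p^{\ssup 1}$) to $f(k)=k$: this gives $\mathcal L f(k)= \gamma k$, so $\frac{\di}{\di t}\E[Y(t)] = \gamma\, \E[Y(t)]$ with $Y(0)=1$ and hence $\E[Y(t)]=\e^{\gamma t}$. For (b), the martingale property of $M(t):=\e^{-\gamma t}Y(t)$ follows from the branching property: conditionally on $Y(t)=k$, $Y(t+s)$ is the sum of $k$ independent copies of $Y(s)$, so $\E[Y(t+s)\mid\F_t]=Y(t)\e^{\gamma s}$. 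Uniform integrability I obtain via an $L^2$-bound: applying $\mathcal L$ to $f(k)=k^2$ yields $\mathcal L f(k)=2\gamma k^2 + k\,\E[J^2]$, and solving the resulting linear ODE gives $\E[Y(t)^2]\leq C\e^{2\gamma t}$, so $\sup_t\E[M(t)^2]<\infty$.

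Part (c) then follows by combining Doob's martingale convergence theorem with the UI from (b): $M(t)$ converges almost surely and in $L^2$ to some $\xi\geq 0$ with $\E[\xi]=1$. Absolute continuity of $\xi$ I deduce from classical results on martingale limits of supercritical Galton--Watson processes with bounded offspring (see Athreya--Ney 1972, Ch.~III). For (d), I condition on the first birth event (which occurs at time $T\sim\operatorname{Exp}(1)$ and produces $J$ offspring while the immortal founder restarts by memorylessness) to derive the distributional fixed-point equation
\begin{equation*}
\xi\stackrel{d}{=}\e^{-\gamma T}\sum_{i=0}^{J}\xi^{(i)},
\end{equation*}
with $(\xi^{(i)})$ i.i.d.\ copies of $\xi$, independent of $(T,J)$. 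Writing $m_k=\E[\xi^k]$, expanding multinomially, isolating the diagonal contribution $(1+\gamma)m_k$ of $\E[(\sum_i\xi^{(i)})^k]$ and using $\E[\e^{-\gamma k T}]=(1+\gamma k)^{-1}$ yields the recursion $\gamma(k-1)m_k = \E[\text{(lower-moment multinomial terms)}]$ for $k\geq 2$. Iterating this, while using the bounded offspring assumption to control the combinatorics, bounds $m_k/k!$ geometrically and therefore gives $\E[\e^{\eta\xi}]<\infty$ for sufficiently small $\eta>0$.

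For (e), I use that $\e^{\eta M(t)}$ is a non-negative cadlag submartingale (Jensen, as $M$ is a martingale and $\exp$ is convex). By conditional Jensen applied to the closing variable $\xi$, we have $\e^{\eta M(t)} = \e^{\eta\,\E[\xi\mid\F_t]}\leq \E[\e^{\eta\xi}\mid\F_t]$, and hence $\sup_{t\geq 0}\E[\e^{\eta M(t)}]\leq \E[\e^{\eta\xi}]=:c_0<\infty$ by (d). Doob's maximal inequality for non-negative cadlag submartingales then yields
\begin{equation*}
\pr\bigl(\max_{t\geq 0} M(t)\geq x\bigr) = \pr\bigl(\sup_{t\geq 0}\e^{\eta M(t)}\geq\e^{\eta x}\bigr)\leq c_0\,\e^{-\eta x},
\end{equation*}
as required. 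I expect the only genuinely technical step to be the moment recursion in (d); a shorter alternative is to cite the classical result that for supercritical Galton--Watson processes whose offspring distribution has a moment generating function finite in a neighbourhood of zero, the martingale limit inherits the same property (Harris 1963; Athreya--Ney 1972). The absolute continuity in (c) is likewise handled by citation rather than a self-contained argument.
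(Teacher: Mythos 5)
Your proposal is correct in outline and, for the most part, runs parallel to the paper: the paper disposes of (a)--(c) by citation (Athreya--Ney for the mean and the absolute continuity of the limit, Asmussen--Hering for uniform integrability), cites Liu for (d), and proves (e) exactly as you do, via Jensen (the submartingale $\e^{\eta M(t)}$, $M(t)=\e^{-\gamma t}Y(t)$) and Doob's maximal inequality with constant $c_0=\E[\e^{\eta\xi}]$. Your direct generator computations for (a) and the $L^2$ bound for (b) are fine replacements for the citations and in fact buy a little more ($L^2$ convergence), using only the boundedness of the offspring distribution. Your use of $M(t)=\E[\xi\mid\F_t]$ in (e) is legitimate precisely because of the uniform integrability from (b), so that step is sound.

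The one place where your argument, as written, does not close is the self-contained route to (d). The fixed-point equation $\xi\stackrel{d}{=}\e^{-\gamma T}\sum_{i=0}^{J}\xi^{(i)}$ and the resulting identity $\gamma(k-1)m_k=R_k$ (with $R_k$ the expectation of the off-diagonal multinomial terms) are correct, but the claim that this ``bounds $m_k/k!$ geometrically'' does not follow from the obvious induction. If one assumes $m_j\le c^j j!$ for $j<k$, each off-diagonal term is bounded by $c^k k!$ (since $\binom{k}{k_0,\dots,k_J}\prod_i k_i! = k!$), and the number of such terms, conditionally on $J\le B$, grows like $k^{B}$; dividing by $\gamma(k-1)$ then fails to reproduce the hypothesis as soon as offspring sizes $\ge 2$ have positive probability (the Yule case $J\equiv 1$ is exactly borderline, consistent with $m_k=k!$ there). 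So the recursion needs a sharper treatment --- e.g.\ a refined induction hypothesis, or a local analysis of the Laplace-transform fixed point $\phi(\theta)=\E\big[\int_0^\infty \e^{-u}\,\phi(\theta\e^{-\gamma u})^{1+J}\,\di u\big]$ --- or one simply invokes the classical result, which is what the paper does (Liu, Corollary~2.2) and what your fallback citation to Harris/Athreya--Ney also accomplishes. With that fallback your proof of (d), and hence of the lemma, is complete; without it, the combinatorial step is a genuine gap.
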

\begin{proof} (a), (b), (c) are standard. See Athreya and Ney \cite{Ney} {\peter for (a) and Theorem~III.7.2 therein for (c)}, {\cec and Asmussen and Hering~\cite[Theorem~2.1]{Asmussen} for (b). Note that the latter is stated for the discrete-time Galton-Watson process, but this implies uniform integrability also for the continuous-time process.}
Denote the martingale limit in (c) by {\peter $\xi=\e^{-\gamma \infty} Y(\infty)$.} 
{\cec (d) follows from~\cite[Corollary 2.2]{Liu}.}
By Jensen's inequality,  $(\exp \{ \eta \e^{-\gamma t} Y(t)\} : t \in[0,\infty])$ is a sub-martingale.
Doob's weak maximal inequality, see \cite[Page 443]{Doob}, gives
$\pr\big(\max_{t \ge 0} \e^{-\gamma t} Y(t) \ge x \big) 
= \pr\big(\max_{t \ge 0} \exp\{ \eta \e^{-\gamma t} Y(t)\}  \ge \e^{\eta x} \big) 
\leq \E [\exp( \eta \xi)] \, \e^{-\eta x}.$
\end{proof}

The selection and mutation model of Section~\ref{sub:Kingman} is a particular case of this more general framework of reinforced branching processes. Recall that at every reproduction event, there is a random number of offspring distributed as $(p_k)_{k\geq 1}$, and each of them is a mutant with probability $\beta$, independently from the rest. Therefore, for all $i,j\geq 0$,
\[p_{ij}=p_{i+j} \binom{i+j}{i} \, (1-\beta)^i \beta^{j},\]
so that $m^{\ssup 1}=(1-\beta)\emm$ and $m^{\ssup 2}=\beta \emm$,
where $\emm = \sum_{k\geq 1}kp_k$. {\peter Moreover, if  $(p_k)_{k\geq 1}$ has finite support, so has
the first marginal \smash{$(p^{\ssup 1}_k)_{k\geq 0}$}.}


\subsection{Preferential attachment networks with fitness} \label{sec:preferential}

\subsubsection{\bf Preferential attachment tree of Bianconi and Barab\'asi} \label{sec:BB} \smallskip

This model is a random tree where at each step a new vertex is added and connected to an existing vertex with a probability depending on the fitness of the vertices. The model was 
introduced by Bianconi and Barab\'{a}si in \cite{Bianconi}.
%
We start with two vertices connected by an edge, and endowed with fitnesses sampled independently 
from $\mu$. At every step $n\ge 3$ a new vertex arrives, gets a fitness sampled from $\mu$
independently of everything else, and connects to one existing vertex chosen randomly from the $n-1$ existing vertices 
with a probability proportional to the product of their fitness and their degree. \smallskip

{\cec The preferential attachment tree of Bianconi and Barab\'asi can be embedded in continuous time and then represents a reinforced branching process as in \cite{Main}, its continuous-time embedding is the reinforced branching process (see Section~\ref{sub:GRBP}) with \smash{$p_{11}=1$}  so that
\smash{$m^{\ssup1}=m^{\ssup 2}=1$}, see \cite{Main} for details. Here families correspond to vertices and the family size  is the vertex degree. At every birth event a new vertex of degree one (equivalently a new family) is created and by establishing an edge to an existing vertex the degree of this vertex is increased by one (equivalently one existing family is getting a new member). At time~$\tau_n$ the $n$th vertex is introduced and, for $m>n$,  the degree of this vertex when the $m$th vertex is introduced is~$Z_n(\tau_m)$.}
In this embedding $\tau_n$ is the birthtime 
of the $n$th vertex, $F_n$ its fitness and $Z_n(t)$ its degree at time~$t$.
We showed in Section~\ref{sub:GRBP} that under the Malthusian condition
$$\int_0^1 \frac{\mu(dx)}{1-x} > 2$$
the process satisfies Assumptions
{\bf (A1-4)} with $\gamma=1$ and $\lambda>1$ the unique solution of the equation
\begin{equation*}
\int_0^1 \frac{x}{\lambda - x} \, \di \mu (x) = 1.
\end{equation*}
We now give an application of our result for the network with fitness distribution 
\begin{equation*}
	\mu(x,1)= \e^{1-(1-x)^{-\varrho}}, \quad  \text{for $x\in [0,1)$,}
\end{equation*}
where $0<\varrho<1$, see Example~\eqref{exa:f1} in Section~\ref{sec:f}.
We  estimate $\sigma_t$, as defined in Equation~\eqref{equ:sigma}. Using that 
\smash{$g(x)= m^{-1}(x) = 1 - (x+1)^{-\frac{1}{\varrho}},$} we have that, {\cec for all $t\geq 0$ large enough,}
$x = \lambda \sigma_t$ is the unique solution of 
\begin{equation*}
	(\log g)'(x)= \frac{1}{\lambda t + 1 - (x+1)},
\end{equation*}
which we can rewrite as 
$\lambda t + 1  = \varrho(x + 1)^{\frac{\varrho + 1}{\varrho}} + (1 - \varrho)(x + 1).$
From this we get
\begin{equation}
	\sigma_t =  x_0 t^\tetta + \mathcal{O} \Big(t^\frac{\varrho - 1}{\varrho + 1}\Big) 
	\quad{\cec\text{as }t\to\infty},
	\label{equ:sigma_approx}
\end{equation}
where $x_0 = \lambda^{-\frac{1}{\varrho+1}} \varrho^{-\tetta}$. By definition of $\varkappa$ in Assumption {\bf (A5.3)} we get 
		\begin{equation*}
			\varkappa = \lim_{x \uparrow 1}  \frac{m''(x) m(x) x}{(m'(x))^2} = \lim_{x \uparrow 1}  \frac{(\varrho + 1) x \big(1 - (1-x)^\varrho \big)}{\varrho} = \frac{\varrho + 1}{\varrho}.
		\end{equation*}
As an example we apply Corollary~\ref{cor:lim_fam}(i). 
Denoting by $a_4:= \varrho^{-\frac{\varrho}{\varrho + 1}} + \varrho^\frac{1}{\varrho + 1}$ and $a_5:=\frac{\varrho}{2(\varrho + 1)}$, we get the following distributional limit for the size of the largest family. Asymptotically as $t \rightarrow \infty$, 
			\begin{equation*}
				\e^{- \big(t  - \lambda^{-\frac{1}{1 + \varrho}}  a_4 t^\tetta + \frac{1}{\lambda} \big) - \frac1\lambda a_5 \log t +  T} \max_{n \in \N} Z_n(t) \Rightarrow W, 
			\end{equation*}
where $W$ is a Fr\'echet distributed random variable with shape parameter $\lambda$ and scale parameter $s$ given by \smash{$s^\lambda = \sqrt{\frac{2 \pi \varrho}{\varrho + 1}} \Gamma(\lambda+1).$} To get a result, which is independent of the continuous time embedding we look at the time $\tau_n$ when the $(n+1)$st vertex is introduced. The largest degree at this instance satisfies\footnote{We write $a_n\asymp b_n$ iff $a_n=\mathcal O(b_n)$ and $b_n = \mathcal O(a_n)$.}
$$\max_{m\leq n} Z_m(\tau_n) 
\asymp n^{\frac1\lambda}
\e^{ \frac1\lambda a_4 (\log n)^\tetta - \frac1\lambda a_5 \log\log n}  ,$$
where the implied constants are positive random variables. 

\subsubsection{\bf Preferential attachment network of Dereich} \label{sec:Dereich} \smallskip

\noindent 
Dereich in~\cite{Dereich} defined an alternative preferential attachment model with fitness that can be studied without a Malthusian condition. In the model a new vertex is connected to each existing vertex
independently by a random number of edges, defining a multigraph.
\smallskip

Start with one vertex labelled one, with fitness $F_1$ drawn from $\mu$ and no edges. Denote the graph by $\mathcal{G}_1$. Given $\mathcal{G}_m$ with vertex set $\{1, ... ,m\}$ we build $\mathcal{G}_{m+1}$ by introducing the vertex labelled $m+1$, giving it fitness $F_{m+1}$  drawn from $\mu$ and connecting it independently to each vertex $n\in \{1, ... ,m\}$ by a random number $E_{n,m+1}$ of directed edges (from vertex $m+1$ to $n$), which is Poisson distributed with rate 
$$ r_{n,m} := \beta F_n  \frac{1+ \text{indegree of $n$ in $\mathcal{G}_m$}}{m},$$
where $0<\beta<1$ is a fixed parameter. 
\smallskip

This model can be embedded into continuous space by letting 
$\tau_n=\frac1\lambda\sum_{i=1}^{n-1} \frac1i$, for $\lambda>0$,
be the time when the 
$n$th vertex is introduced and defining
$Z_n(\tau_m)$, $m\geq n$ to be the indegree of vertex~$n$ prior to the establishment  of vertex~$m+1$, or in other words the number of edges pointing from vertices $n+1,\ldots,m$ to vertex $n$. {\peter Note that the indegree process $(Z_n(t) \colon t\geq \tau_n)$ has $Z_n(\tau_n)=0$ and it is actually the process 
$(1+Z_n(t) \colon t\geq \tau_n)$ that corresponds to the family sizes in our general framework. This is taken into account when we check below} that this model 
{\cec satisfies assumptions {\bf (A1-4)}} without any Malthusian condition for
{\peter $\gamma=\lambda\beta$.}  {\cec But we first show what kind of information can be obtained by applying our main results to this model.}
\smallskip

We look at the fitness $V(t)$ of the vertex $m\in\{1,\ldots,n-1\}$ with largest degree at the time $t=\frac1\lambda \log n + C + o(1)$ when the $n$th vertex is introduced (where $C$ denotes the Euler-Mascheroni constant) again in the case Gnedenko's distribution (Example (3) in Section~\ref{sec:f}). Recall that in this case
$g(x)=\frac{x}{1+x}$ and $\lambda\sigma_t= \sqrt{\lambda t+1}-1$. We denote by $S(t)$ the time of creation of this vertex;  by Corollary~\ref{cor:lim_fam}, we have \smash{$S(t) = \sigma_t + (W+o(1))\sqrt{\sigma_t/\lambda}$} in distribution when $t\uparrow\infty$, where $W$ is a centred Gaussian {\cec random variable} of variance $\nicefrac12$. Theorem~\ref{theo:theo_main} gives that, in distribution when $t\uparrow\infty$,
\begin{align*}
V(t) & = g\Big(\lambda \sigma_t + \sqrt{\lambda \sigma_t}\big(W+o(1)\big)\Big) 
+ {\mathcal O}\Big(g'\big(\lambda \sigma_t + \sqrt{\lambda \sigma_t}(W+o(1)) \big)\Big)\\
&= 1-\frac1{\lambda\sigma_t}+\frac{W+o(1)}{\sqrt{\lambda\sigma_t}}
= 1-\frac1{\sqrt{\lambda t}} + \frac{W+o(1)}{(\lambda t)^{\nicefrac14}},
\end{align*}
so that there is asymptotic normality for the fitness of the vertex of maximal degree. This is in  contrast to the result in Corollary~\ref{cor:23}(ii) for the case of $\mu$ in the maximum domain of attraction of the Weibull distribution, where $t(1-V(t))$ converges to a Gamma distribution. 

\bigskip
The rest of this section is devoted to the proof of {\bf (A1-4)} for the model of Dereich.
Assumption {\bf (A1)} is straightforward for the deterministic choice
$$\tau_n = \frac1\lambda \sum_{i=1}^{n-1} \frac 1i = \frac1\lambda \log n + \frac C \lambda +o(1),$$
where $C$ is the Euler-Mascheroni constant. To show that Assumption {\bf (A2)} is satisfied 
we introduce a coupling of the indegree processes $(Z_n(t) \colon t\ge 0)$ to independent Yule processes. For all $n\geq 1$, $u\geq 0$, we set
\[X_n(u)=Z_n\big(\sfrac{u}{F_n}+\tau_n\big).\]

\begin{prop}\label{prop:coupling}
There exists a coupling of the processes $(X_n(u) \colon u\geq 0)$ and 
a  sequence 
$(Y_n(u) \colon u\geq 0)$ of independent Yule processes with parameter $\gamma=\beta\lambda$ such that {\peter (see Equation~\eqref{equ:IA} for the definition of $\bor{I}(t)$)}, 
{\peter as $t\to \infty$,}
$$\sup_{n\in \bor{I}(t)} \pr\big( \sup_{u \ge t} \e^{-\gamma u} \big |1+X_n (u) - Y_n(u)  \big | \geq \varepsilon \big| (F_i)_i\big) 
\longrightarrow 0.$$
\end{prop}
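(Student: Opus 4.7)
The plan is to construct $X_n$ and $Y_n$ on a common probability space by driving both from a single planar Poisson rain, and then to bound the difference through a martingale argument. Concretely, for each $n\geq 1$ let $\Pi_n$ be an independent Poisson point process on $[0,\infty)^2$ with Lebesgue intensity, independent of $(F_i)_{i\in\N}$. Writing $u_m := F_n(\tau_m-\tau_n)$ for $m\geq n$, I would construct $X_n$ piecewise-constantly on the grid $(u_m)$ by declaring, for $s\in(u_m,u_{m+1}]$, that $X_n$ jumps by $+1$ at each point $(s,v)\in\Pi_n$ with $v\leq \gamma(1+X_n(u_m))$, where $\gamma=\beta\lambda$. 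Since $u_{m+1}-u_m = F_n/(\lambda m)$, this increment is conditionally Poisson with mean $\beta F_n(1+X_n(u_m))/m$, which coincides with the Dereich rate $r_{n,m}$, so $(X_n(u_m))_{m\geq n}$ indeed has the law of $(Z_n(\tau_m))_{m\geq n}$. At the same time, build the Yule process $Y_n$ from the same $\Pi_n$ by letting it jump by $+1$ at each point $(s,v)\in\Pi_n$ with $v\leq \gamma Y_n(s-)$; this gives a Yule process with parameter $\gamma$ as required.

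The next step is to control $D_n(u):=(1+X_n(u))-Y_n(u)$. On any interval $(u_m,u_{m+1}]$, both processes consume the same Poisson points, the only difference being that $X_n$ uses the frozen threshold $\gamma(1+X_n(u_m))$ while $Y_n$ uses the current threshold $\gamma Y_n(s-)$. Therefore $D_n$ only changes at $\Pi_n$-points falling in the symmetric-difference band between these two thresholds, whose Lebesgue height is bounded by $\gamma|D_n(s-)|+\gamma(Y_n(s-)-Y_n(u_m))$. Writing $\tilde D_n(u):=\e^{-\gamma u}D_n(u)$ and decomposing $D_n$ into a purely discontinuous local martingale $M_n$ and its predictable compensator $A_n$, I would show by direct computation that the predictable quadratic variation of $\e^{-\gamma\cdot}M_n$ is dominated by an integral of the form $\int_t^\infty \e^{-2\gamma s}\gamma\bigl(\E Y_n(s)+\E(1+X_n(s))\bigr)\,\mathrm ds$, which is $\mathcal O(\e^{-\gamma t})$ uniformly in $n$, and that the total variation of $\e^{-\gamma\cdot}A_n$ on $[t,\infty)$ obeys a bound of the same order by a telescoping-type argument that exploits $\sum_{m\ge 1} 1/m^2 < \infty$. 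Doob's $L^2$-inequality then gives $\E\bigl[\sup_{u\geq t}\tilde D_n(u)^2\bigm|(F_i)_i\bigr]\to 0$ as $t\to\infty$.

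Since $F_n\in(0,1]$ is bounded, all constants in the preceding bounds can be made uniform over $n\in\bor{I}(t)$, so the conditional Markov inequality applied to $\tilde D_n$ yields the claimed uniform convergence in probability. The main obstacle is the discretisation bias introduced by freezing $X_n$'s rate on the grid intervals, which is not purely Poissonian and must be controlled separately via the compensator estimate; the key algebraic fact that saves the argument is that, per step, $\E[1+X_n(u_{m+1})\mid\mathcal F_m]\,/\,\E[Y_n(u_{m+1})\mid\mathcal F_m] = (1+\beta F_n/m)\e^{-\beta F_n/m} = 1+\mathcal O(1/m^2)$, so that the accumulated multiplicative bias is $1+\mathcal O(1/n)$, which is negligible for $n\in\bor{I}(t)$ as $t\to\infty$.
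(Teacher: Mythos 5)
Your coupling is genuinely different from the paper's: you drive $X_n$ and $Y_n$ synchronously from one planar Poisson process and try to control the difference $D_n=1+X_n-Y_n$ by a martingale/compensator argument, whereas the paper keeps the Yule processes as given, couples their grid increments $J_n(m)$ to the model's Poisson increments $P_n(m)$ in total variation (after first excluding, with vanishing probability, the event that some particle has two descendants in a grid interval), and thereby gets \emph{exact equality} $1+X_n=Y_n$ at all grid times with probability $1-\mathcal O(\xi_n/n)$, so that only within-interval Yule increments remain. Your construction does reproduce the correct law of the skeleton (the increment on $(u_m,u_{m+1}]$ is conditionally Poisson with mean $\beta F_n(1+X_n(u_m))/m$), and a version of your argument can be made to work; but as written there is a genuine gap.

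The gap is that all your quantitative estimates are taken on $[t,\infty)$: the bracket bound $\int_t^\infty \e^{-2\gamma s}\gamma\bigl(\E Y_n(s)+\E(1+X_n(s))\bigr)\,\mathrm ds=\mathcal O(\e^{-\gamma t})$ together with the compensator bound on $[t,\infty)$ controls, via Doob, only $\sup_{u\ge t}|\tilde D_n(u)-\tilde D_n(t)|$, and nothing you write controls $\tilde D_n(t)=\e^{-\gamma t}\bigl(1+X_n(t)-Y_n(t)\bigr)$ itself. Worse, the bounds you invoke hold verbatim if $X_n$ and $Y_n$ are \emph{independent} (you bound the jump intensity of $D_n$ by the sum of the two population sizes, which discards the only benefit of the synchronous coupling, namely that the true intensity is the band height $\gamma|1+X_n(u_m)-Y_n(s-)|$); for independent processes $\e^{-\gamma u}|1+X_n(u)-Y_n(u)|$ converges almost surely to a strictly positive limit, so the proposition fails — hence your stated estimates cannot suffice. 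The actual source of smallness has to be $n\ge \inf\bor{I}(t)\to\infty$: one needs a Gronwall-type bound on $\E[|D_n(s)|\mid(F_i)_i]$ over all of $[0,\infty)$, driven by the frozen-rate correction $\gamma(X_n(s-)-X_n(u_{m(s)}))$, whose accumulated contribution after scaling is of order $\sum_{m\ge n}m^{-2}\asymp 1/n$; only then can the bracket of $\e^{-\gamma\cdot}M_n$ over the whole time axis be shown to be $\mathcal O(1/n)$ rather than merely $\mathcal O(\e^{-\gamma t})$, and Markov/Doob applied uniformly over $n\in\bor{I}(t)$ gives the claim. Your closing remark about the per-step bias $(1+\beta F_n/m)\e^{-\beta F_n/m}=1+\mathcal O(1/m^2)$ points in this direction, but it concerns a ratio of conditional expectations, not $|D_n|$ or its second moment, and it is never fed back into the bracket estimate — which is circular without the Gronwall step, since the bracket involves $\E|D_n(s)|$. (Two minor further points: in the model $Z_n$, hence $X_n$, is constant between grid times, so you should place all jumps of your constructed $X_n$ at the right endpoints, or estimate the within-interval discrepancy separately; and the final bounds must be conditional on $(F_i)_i$ and uniform over $n\in\bor{I}(t)$, which your constants do allow since $F_n\in(0,1)$.)
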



\noindent
To prove this start with a sequence $(Y_n(u) \colon u\geq 0)$ of independent Yule processes with parameter $\gamma$.
For $m\geq n+1$ we take 
$$ J_n(m)=Y_n(F_n(\tau_{m}-\tau_n)) - Y_n(F_n(\tau_{m-1}-\tau_n)) .$$
We need the following lemma.
\begin{lem} 
Given $n$ there is a coupling of $J_n(m)$ and random variables $P_n(m)$, $m\geq n+1$, 
such that {\peter
\begin{itemize}
\item conditionally on $F_n=f\in(0,1)$  the random variable $P_n(n+1)$ is Poisson distributed
with parameter $\beta f \frac{1}{n+1}$, and \\[-5mm]
\item for $m\geq n+2$, conditionally on $F_n=f\in(0,1)$  and $\sum_{\ell=n+1}^{m-1} P_n(\ell)=k\in\{0, 1, \ldots\}$, the random variable $P_n(m)$ is Poisson distributed
with parameter $\beta f \frac{1+k}m$,
\end{itemize}}
and
$\displaystyle\sup_{n\in \bor{I}(t)} \pr\big( J_n(m)\not=P_n(m) \mbox{ for some $m\geq n+1$}\big) \longrightarrow 0, \text{ as $t \rightarrow \infty$.} $
\end{lem}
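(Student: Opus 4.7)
The plan is an inductive construction of the $P_n(m)$ for $m\geq n+1$, using at each step a \emph{per-ancestor} maximal coupling rather than a direct coupling at the level of the full increment. Write $\delta_m:=F_n(\tau_m-\tau_{m-1})=F_n/(\lambda m)$ and $u_k:=F_n(\tau_k-\tau_n)$; since $\gamma=\beta\lambda$, we have $\gamma\delta_m=\beta F_n/m$. By the branching structure of the Yule process, conditional on $Y_n(u_{m-1})=k_m$, the increment $J_n(m)$ decomposes as $J_n(m)=\sum_{i=1}^{k_m}G_i^{(m)}$, where the $G_i^{(m)}$ are i.i.d.\ geometrically distributed on $\{0,1,2,\ldots\}$ with parameter $e^{-\beta F_n/m}$, each representing the number of descendants over $[u_{m-1},u_m]$ of one of the $k_m$ ancestors alive at time $u_{m-1}$. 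In parallel, the target distribution of $P_n(m)$ conditional on $k^P_m:=1+\sum_{\ell<m}P_n(\ell)$ is $\mathrm{Pois}(\beta F_n k^P_m/m)$, which decomposes as a sum of $k^P_m$ i.i.d.\ $\mathrm{Pois}(\beta F_n/m)$ summands $N_i^{(m)}$.

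On the event where $J_n(\ell)=P_n(\ell)$ for all $\ell<m$, we have $k_m^P=k_m$ by the induction hypothesis, so I pair $(G_i^{(m)},N_i^{(m)})$ via a maximal coupling for each $i\leq k_m$ and set $P_n(m):=\sum_i N_i^{(m)}$; otherwise I sample $P_n(m)$ independently from its target conditional law. The essential observation is that
\[\pr(G_i^{(m)}=0)=e^{-\beta F_n/m}=\pr(N_i^{(m)}=0),\]
so the two marginals agree at zero, and a direct estimate using only the tails of order $\geq 2$ yields $d_{\mathrm{TV}}(\mathrm{Geom}(e^{-\beta F_n/m}),\mathrm{Pois}(\beta F_n/m))\leq C(\beta F_n/m)^2$ for a universal constant $C$.

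A union bound over the $k_m$ ancestors, conditionally on the Yule state and $F_n$, then gives
\[\pr\big(J_n(m)\neq P_n(m)\,\big|\,Y_n(u_{m-1}),F_n\big)\leq C\,Y_n(u_{m-1})\,(\beta F_n/m)^2.\]
Using $\E[Y_n(u_{m-1})\mid F_n]=e^{\gamma F_n(\tau_{m-1}-\tau_n)}\asymp(m/n)^{\beta F_n}$ from Lemma~\ref{lem:Yule}(a), a further union bound over $m\geq n+1$ yields, conditionally on $(F_i)_i$,
\[\pr\big(\exists\,m\geq n+1:J_n(m)\neq P_n(m)\,\big|\,(F_i)_i\big)\leq C\,\beta^2F_n^2\,n^{-\beta F_n}\sum_{m\geq n+1} m^{\beta F_n-2},\]
which converges because $\beta F_n<1$ and is of order $n^{-1}$. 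For $n\in\bor{I}(t)$ we have $n\geq \exp(\lambda(\sigma_t-\kappa\sqrt{\sigma_t}))$, which tends to infinity uniformly in $n\in\bor{I}(t)$ as $t\to\infty$, so the estimate vanishes as required.

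The main obstacle, and what motivates the per-ancestor decomposition, is that the naive bound $d_{\mathrm{TV}}(\mathrm{NB}(k,e^{-\beta F_n/m}),\mathrm{Pois}(\beta F_n k/m))=O((k\beta F_n/m)^2)$ would produce a per-step discrepancy of order $(k/m)^2$ and a divergent sum over $m$ whenever $\beta F_n>\tfrac12$, which is unavoidable for $\mu$-typical fitnesses when $\beta>\tfrac12$. The per-ancestor decomposition gains a factor $1/k$, reducing the per-step bound to $k/m^2$, which is summable for every admissible $\beta F_n<1$.
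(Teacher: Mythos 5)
Your argument is correct, and it reaches the conclusion by a route that differs from the paper's in its key device. The paper first introduces the events $\cE_{n,m}$ that no individual alive at $\tau_{m-1}$ has two or more descendants in $[\tau_{m-1},\tau_m)$, shows $\sup_{n\in\bor I(t)}\pr(\cE_n^{\rm c}(t))\to0$, and only then — on this good event, where the increment is a sum of Bernoulli variables — invokes the classical binomial--Poisson total variation bound ($\le 2\sum p_i^2$, linear in the number of ancestors) before summing over $m$; the summation is controlled pathwise via $Y_n^*(\tau_{m-1})\sim\xi_n(m/n)^{\beta F_n}$ and an extreme-value estimate for $\sup_{n\in\bor I(t)}\xi_n$. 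You bypass the Bernoulli reduction entirely: by the branching property the increment is, given $Y_n(u_{m-1})=k_m$, a sum of $k_m$ i.i.d.\ geometric variables, and a direct geometric-versus-Poisson coupling with per-ancestor total variation $\mathcal O((\beta F_n/m)^2)$ gives the same linear-in-$k_m$ per-step bound in one stroke; you then control the sum over $m$ by the first moment $\E[Y_n(u_{m-1})\mid F_n]\asymp(m/n)^{\beta F_n}$ from Lemma~\ref{lem:Yule}(a) instead of the almost-sure asymptotics, obtaining a deterministic bound of order $1/(n(1-\beta))$ uniform in the fitness, so that the uniformity over $n\in\bor I(t)$ follows simply from $\inf\bor I(t)\to\infty$ without the extreme-value step for the $\xi_n$. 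This is arguably cleaner (one coupling instead of an event estimate plus a coupling, and a moment bound instead of pathwise control); what the paper's two-step route buys is that it only ever compares Bernoulli/binomial with Poisson, for which the quoted textbook bound applies verbatim, whereas you need to verify the elementary geometric--Poisson estimate yourself. Two small slips to fix: $\tau_m-\tau_{m-1}=\frac1{\lambda(m-1)}$, not $\frac1{\lambda m}$, so the geometric parameter is $\e^{-\beta F_n/(m-1)}$ and the zero-probabilities of your two marginals agree only up to an $\mathcal O(m^{-2})$ error (harmless, via the triangle inequality with an intermediate $\mathrm{Pois}(\beta F_n/(m-1))$, but the phrase ``agree at zero'' as stated is not exact); and when asserting the $\mathcal O(n^{-1})$ bound you should note explicitly that the constant $1/(1-\beta F_n)$ is bounded by $1/(1-\beta)$ because $\mu$ is supported on $(0,1)$ and $\beta<1$, and that the resampling on the failure event is what guarantees $P_n(m)$ retains exactly the required conditional Poisson law.
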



\begin{proof}
We abbreviate $Y_n^*(t)=Y_n(F_n(t-\tau_n))$ and note that, {\cec conditionally on $F_n$,}
$(Y_n^*(t) \colon t\geq {\cec \tau_n})$ is a continuous time Galton-Watson process starting with one individual at time $\tau_n$ and individuals performing binary branching at rate $\gamma F_n$. The coupling is now
performed in two steps.
\begin{itemize}
\item[$(a)$]  For $m \ge n+1$, we let $\cE_{n,m}$ be the event that all of the individuals alive at time $\tau_{m-1}$ have at most one descendant in the interval $[\tau_{m-1}, \tau_{m})$. This means that an individual existing at time $\tau_{m-1}$ can only give birth to at most one individual, which in turn does not reproduce before $\tau_{m}$. 
Denoting 
$$\cE_n(t) = \bigcap_{\heap{m\geq n+1}{\tau_{m+1}<t}}  \cE_{n,m},$$ 
we show that
	\begin{equation*}
		\sup_{n\in \bor{I}(t)}\pr (\cE_n^{\rm c}(t)) \rightarrow 0, \quad \text{ as $t \rightarrow \infty$.} 
	\end{equation*}
\item[$(b)$] 
{\cec For all $m\geq n+1$,} there are random variables  $J_n^*(m)$, which, conditionally on $F_n=f$ 
and $\sum_{l={\peter n+1}}^{m-1} J_n^*(l)=k$, are binomially distributed with parameters $1+k$ and 
$\beta f/m$, such that $J_n^*(m)=J_n(m)$ on $\cE_{n,n+1}\cap \cdots \cap \cE_{n,m}$. 
We can couple $J_n^*(m)$ to random variables $P_n(m)$, which given
$F_n=f{\cec \in(0,1)}$ and $\sum_{\ell={\peter n+1}}^{m-1} P_n(\ell)=k{\cec \in\{0, 1, \ldots\}}$ are Poisson distributed
with parameter $\beta f \frac{1+k}m$ such that 
$$\sup_{n\in \bor{I}(t)} \pr\big( J_n^*(m)\not=P_n(m) \mbox{ for some $m\geq n+1$}\big) \longrightarrow 0, 
\quad \text{ as $t \rightarrow \infty$.} $$
\end{itemize}
It is clear that the lemma follows from claims $(a)$ and $(b)$.
\medskip\pagebreak[3]


We now prove $(a)$. Fix $n \in \bor{I}(t)$ and let $m > n$. Denote by $\eta = \gamma F_n$  and by
$W_\theta$ an independent random variable, exponentially distributed with parameter $\theta$. Recall that in a Yule process of rate $\eta$ each particle gives birth to one offspring after an exponentially distributed waiting time with rate $\eta$, independently of everything else. Thus the {\peter conditional} probability that a fixed particle has at least one offspring in the interval $[\tau_{m}, \tau_{m+1})$ is equal to 
$$	\pr (W_\eta \le  \tau_{m+1} - \tau_{m} {\peter | (F_i)_i}) = 1 - \e^{-\frac {\eta}{\lambda m}} \le \sfrac{\beta F_n}{m} .$$ 
Furthermore, the probability of a given particle having at least 2 descendants in $[\tau_{m}, \tau_{m+1})$ is equal to
\begin{eqnarray}
	\label{equ:W}
	\pr \big(W_\eta + W_{2\eta} \le \tau_{m+1} - \tau_{m}  {\peter | (F_i)_i}\big) & = &  1 + \e^{-\frac {2\eta}{\lambda m}} - 2 \e^{-\frac {\eta}{ \lambda m}} \\
	& = & (1 - \e^{-\frac{\beta F_n}{m}})^2  \le \sfrac{(\beta F_n)^2}{m^2}, \nonumber
\end{eqnarray}
where $W_{2\eta}$ is the minimum of two independent exponentially distributed waiting times
with rate~$\eta$. Using the law of total probability we can express the probability that at least one particle of $(Y_n^*(t) \colon t\geq 0)$ at time $\tau_m$ has at least 2 descendants in the interval $[\tau_{m}, \tau_{m+1})$,
\begin{align}
	\mathbb P(\cE^{\rm c}_{n,m+1} {\cec | (F_i)_i}) 
	= \sum_{k=1}^\infty \mathbb P\big(\cE^c_{n,m+1} |Y^*_n(\tau_{m})=k , {\peter (F_i)_i}\big) \pr(Y^*_n(\tau_{m})=k) 
\leq \sfrac{(\beta F_n)^2}{m^2} \E[Y^*_n(\tau_{m}){\peter | (F_i)_i}]. \label{equ:s}
\end{align}	
By Lemma \ref{lem:Yule}(a), we have
\begin{equation}
	\E\big[Y^*_n(\tau_{m}){\cec | (F_n)_n}\big] = \E\big[{Y}_n(F_n(\tau_{m} - \tau_n)){\peter | (F_i)_i}\big] = \e^{\beta \lambda F_n(\tau_{m}-\tau_n)} \asymp \big(\sfrac mn\big)^{\beta F_n},
	\label{equ:E}
\end{equation}	
where we have used the fact that $\tau_m - \tau_n = \frac 1 \lambda \log (\frac{m}{n}) + \mathcal O(1)$ almost surely for all $m\geq n$ and $n$ large {\cec (see~\cite[Theorem~III.9.3]{Ney})}.
We now look at $n$ such that {$n\in \bor{I}(t)$ or, equivalently,} 
$$\e^{\lambda(\sigma_t - \kappa \sqrt{\sigma_t})} \le n \le \e^{\lambda(\sigma_t + \kappa \sqrt{\sigma_t})}.$$ 
Putting this together with Equations~\eqref{equ:s} and~\eqref{equ:E} we get
\begin{eqnarray*}
	\pr(\cE^c_n(t){\peter |(F_i)_i})&  \le & \sum_{m=n}^{\infty} \pr (\cE^c_{n,m+1}{\peter |(F_i)_i}) \le {\rm const. } \sum_{m{\cec \geq} \e^{\lambda(\sigma_t-\kappa \sqrt{\sigma_t})}}
	\frac{(\beta F_n)^2}{m^2}  \Big(\frac mn\Big)^{\beta F_n} \\ & \le &  {\rm const. } \frac{(\beta F_n)^2}{\e^{\beta \lambda F_n (\sigma_t-\kappa \sqrt{\sigma_t})}} \sum_{m=\e^{\lambda(\sigma_t-\kappa \sqrt{\sigma_t})}}^{\infty} m^{\beta F_n - 2} \\
& \le  &  {\rm const. } \frac{(\beta F_n)^2}{\e^{\beta \lambda F_n(\sigma_t-\kappa \sqrt{\sigma_t})}} \int_{\e^{\lambda(\sigma_t-\kappa \sqrt{\sigma_t})}}^{\infty} x^{\beta F_n - 2} \:\dx 
 \le  {\rm const. }\frac{(\beta F_n)^2}{1 - \beta F_n} \e^{-\lambda(\sigma_t-\kappa \sqrt{\sigma_t})}, 
\end{eqnarray*}
which goes to zero, as $t \rightarrow \infty$. This completes the proof of $(a)$.
\smallskip


\pagebreak[3]



\noindent 
To show $(b)$ fix $n \in \bor{I}(t)$ and let $m \ge n+1$. 
Note that the existence of $J_n^*(m)$ binomially distributed with parameters $k+1$ 
and $\beta f/{m}$ such that $J_n^*(m)=J_n(m)$ on $\cE_{n,n+1}\cap \cdots \cap \cE_{n,m}$
is easy because on this event there are $k+1$ individuals alive at time $\tau_{m-1}$ and each independently produces offspring with probability $\beta f/{m}$.
\smallskip

Moreover, by Lemma \ref{lem:Yule}(c), we have
$Y^*_n(\tau_{m}) \sim \xi_n \e^{\gamma F_n(\tau_{m}- \tau_n)} =\mathcal{O} \big((\frac{m}{n})^{\beta F_n}{\xi_n} \big)$ almost surely,when $m\uparrow\infty$, {\cec where $(\xi_n)_{n\geq 1}$ is a sequence of i.i.d.\ standard exponential random variables (because the martingale limit of a Yule process is a standard exponential, see, e.g., \cite[Section~4]{Kendall})}. 
Applying Theorem~9 of \cite[ch.4.12]{Probability}  
the conditional total variation distance between $J_n^*(m)$ and $P_n(m)$ {\peter conditionally on $(F_i)_i$ and $Y^*_n(\tau_{m-1})$} satisfies
\begin{equation*}
	d_{\text{TV}} (J_n^*(m), P_n(m)) 
	\le 2 \sum_{i = 1}^{Y^*_n(\tau_{m-1})} \Big(\frac{\beta F_n}{m}\Big)^2
	= \frac{2(\beta F_n)^2 Y^*_n(\tau_{m-1})}{m^2} 
	= \mathcal{O}\Big(\frac{(\beta F_n)^2\xi_n}{m^{2 - \beta F_n}n^{\beta F_n}}\Big),
\end{equation*}
{\cec almost surely} when $m\uparrow\infty$, where the $\mathcal O$-term {\cec is uniform in}~$n$.
This implies that {\cec (see, e.g.\ \cite[Exercise~4.12.5]{Probability})} there exists a coupling of $J_n^*(m)$ and $P_n(m)$, $m\geq n+1$, such that
$$\pr\big( J_n^*(m)\not=P_n(m) \mbox{ for some $m\geq n+1$}{\peter |(F_i)_i}\big)
\leq {\rm const.}\,\xi_n \sum_{m=n}^\infty \frac{(\beta F_n)^2}{m^{2 - \beta F_n}n^{\beta F_n}}
\leq\frac{{\rm const.}\, \xi_n}n,$$
using again that $\beta<1$. 
This implies that
\[\sup_{n\in \bor{I}(t)}\pr\big( J_n^*(m)\not=P_n(m) \mbox{ for some $m\geq n+1$}{\peter |(F_i)_i}\big)
\leq {\rm const.}\, \sup_{n\in \bor{I}(t)} \{\nicefrac{\xi_n}n\}
\leq {\rm const.}\, \frac{\sup_{n\in \bor{I}(t)} \xi_n}{\inf(\bor{I}(t))},\]
{\cec where $\inf(\bor{I}(t))$ is the smallest element of the set $\bor{I}(t)$, i.e.\ $\lceil\exp(\lambda(\sigma_t-\kappa\sqrt{\sigma_t}))\rceil$ (see Equation~\eqref{equ:IA}).}
Note that the cardinality of $\bor{I}(t)$ is less than or equal to $2\kappa \sqrt{\sigma_t}$, and the $\xi_n$'s are i.i.d.\ standard exponential random variables. Thus, by extreme value theory {\cec (see, e.g., \cite[Equation~(1.1.2)]{Resnick})}, we get that, in distribution when $t\uparrow\infty$,
\[\sup_{n\in \bor{I}(t)}\xi_n = \log |\bor{I}(t)| + \mathcal O(1)
= \log(\sigma_t)/2 + \mathcal O(1).\]
{By definition of $\bor{I}(t)$, we also have that $\inf (\bor{I}(t)) = \sigma_t - \kappa \sqrt{\sigma_t}$}, thus implying that
$${\peter \sup_{n\in \bor{I}(t)}} \pr\big( J_n^*(m)\not=P_n(m) \mbox{ for some $m\geq n+1$}{\peter |(F_i)_i}\big)
\to 0\text{ when }t\uparrow\infty,$$
which concludes the proof.
\end{proof}

\noindent
To complete the proof of Proposition~\ref{prop:coupling} we define
$$X^*_n(F_n(t-\tau_n)) =\sum_{k=n+1}^m P_n(k), \qquad  \mbox{ for all }{\cec m\geq n+1 \text{ and }} \tau_m\leq t < \tau_{m+1}, $$
and note that, for all $n\geq 1$, the process $(X^*_n(t)\colon t\geq \tau_n)$ {\peter has the same distribution as $(X_n(t)\colon t\geq \tau_n)$.}
Moreover,
\begin{align*}
\pr\big( 1+X_n(F_n(\tau_m-\tau_n)) = Y_n(F_n(\tau_m-\tau_n)) & \mbox{ for all } m\geq n+1\big)\\
& =1-\pr\big( J_n(m)\not=P_n(m) \mbox{ for some $m\geq n+1$}\big)  
\end{align*}
because
$X_n(F_n(\tau_m-\tau_n))=\sum_{k=n+1}^m P_n(k)$ and $Y_n(F_n(\tau_m-\tau_n))=1+\sum_{k=n+1}^m J_n(k)$. Suppose now that $\tau_m\leq t < \tau_{m+1}$ and  $1+X_n(F_n(\tau_m-\tau_n)) = Y_n(F_n(\tau_m-\tau_n))$. Then, almost surely, as $m\uparrow\infty$,
\begin{align*}
|1+X_n(F_n(t-\tau_n)) - Y_n(F_n(t-\tau_n))|
& = |Y_n(F_n(\tau_m-\tau_n)) - Y_n(F_n(t-\tau_n))|\\ &
=(\xi_n+o(1)) \e^{\gamma F_n(t- \tau_n)}- \xi_n \e^{\gamma F_n(\tau_m- \tau_n)}\\
&\leq 
(\xi_n+o(1)) \e^{\gamma F_n(t- \tau_n)} \big( 1- \e^{-\beta F_n/m} \big),
\end{align*}
and hence
\begin{align*}
\sup_{n\in \bor{I}(t)}  & \pr\big( \sup_{u\geq t} \e^{-\gamma u} |1+X_n(u) - Y_n(u)| \geq \varepsilon {\peter |(F_i)_i}\big)
\\ & \leq \sup_{n\in \bor{I}(t)} \pr\Big( \xi_n( 1- \e^{-\beta F_n/n}) \geq \nicefrac\varepsilon2{\peter |(F_i)_i}\Big)  
+ \sup_{n\in \bor{I}(t)} \pr\big( J_n(m)\not=P_n(m) \mbox{ for some $m\geq n+1$}{\peter |(F_i)_i}\big) \\ & \longrightarrow 0 \mbox{ almost surely as $t\uparrow\infty$.}
\end{align*}
This completes the proof of Proposition~\ref{prop:coupling} and hence of Assumption~{\bf (A2)}.
Further, from Lemma~\ref{lem:Yule}(c) we see that Assumption {\bf (A3)} holds.
\medskip
 
Finally, to prove Assumption {\bf (A4)} we fix the fitnesses $(F_n)_{n\geq 1}$ and work conditionally {\cec on this sequence of random variables}. Note that, by definition, the jump of $(X_n(t) \colon t\geq \tau_n)$ at time $t=F_n(\tau_{m} - \tau_{n})$ given $X_n(F_n(\tau_{m-1}-\tau_n)) = k$ is Poisson distributed with parameter
$\beta F_n \frac{1+k}{m}$. Hence the processes $(M^{\ssup n}_m \colon m\geq n)$ given by
$$M^{\ssup n}_m:=\big(1+X_n(F_n(\tau_{m}-\tau_n)) \big)\,\prod_{\ell=n+1}^{m} \big(1+\sfrac{\beta F_n}{\ell} \big)^{-1}$$
are martingales, {\cec i.e.\ for all $n\geq 1$, for all $m\geq n+1$, $\mathbb E[M^{\ssup n}_{m+1}|(F_i)_i, M^{\ssup n}_{m}]=M^{\ssup n}_m$}. The scaling factor satisfies
$$f_m^{-1}:=\prod_{\ell=n+1}^{m} \big(1+\sfrac{\beta F_n}{\ell} \big)
=\e^{\gamma (F_n(\tau_{m}-\tau_n))}(1+o(1)).$$
Hence almost sure limits $\displaystyle M^{\ssup n}_\infty=\lim_{m\to\infty} M^{\ssup n}_m$ exist and Doob's submartingale inequality yields
\begin{align*}
\pr\big(\max_{u\geq 0} X_n(u) \e^{-\gamma u} \ge x\big)
&\leq \pr\big(\max_{m\geq n+1} M^{\ssup n}_m \ge \nicefrac{x}{2}{\cec (1+o(1))}{\cec | (F_i)_i}\big)\\
&\leq \E\big[\max_{m\geq n+1} \e^{2{\cec \varpi} M^{\ssup n}_m} {\cec | (F_i)_i}\big] \,  \e^{-{\cec \varpi}  x}
 \leq \E\big[\e^{2{\cec \varpi}  M^{\ssup n}_\infty}{\cec | (F_i)_i} \big] \,  \e^{-{\cec \varpi}  x},
\end{align*}
{\cec for all $\varpi>0$}.
It remains to show that there exists ${\cec \varpi} >0$ such that $ \E[\e^{{\cec \varpi}  M^{\ssup n}_\infty}] <\infty$
or, using Fatou's lemma, that \smash{$\E[\exp({\cec \varpi}  M^{\ssup n}_m)]$} remains bounded. 
Using the generating function for Poisson variables we~get
$$\mathbb E\big[\e^{{\cec \varpi}  M^{\ssup n}_m} \big| {\peter (F_i)_i},  X_n(F_n(\tau_{m-1}-\tau_n)) =k\big]
= \exp\big( (1+k) (\eta f_m + \beta F_n \sfrac1m (\e^{{\cec \varpi}  f_m}-1))\big).$$
Hence, using that $\e^{{\cec \varpi}  f_m}-1\leq {\cec \varpi}  f_m +C {\cec \varpi}^2 f_m^2$ for some constant $C>0$, we get
$$\E\big[\e^{{\cec \varpi}  M^{\ssup n}_m}{\cec | (F_i)_i}\big]
\leq \E\big[\e^{({\cec \varpi} +C{\cec \varpi}^2 \frac1m f_m) 
M^{\ssup n}_{m-1}}\big],$$ 
and iterating this we get an upper bound of $\e^{a_{m-n}}$ for the recursion $a_0={\cec \varpi} $
and $$a_{i+1}=a_i+Ca_i^2 \sfrac1{m-i} f_{m-i}, \quad \mbox{  for $i\geq 0$. }$$
As $f_\ell\asymp (n/\ell)^{\beta F_n}$ {\cec almost surely when $\ell\geq n+1\to\infty$}, 
there exists {\cec an almost-surely finite $(F_i)_i$-measurable random variable} $A$ such that
$$\prod_{\ell=n+1}^m \big(1+C\sfrac1{\ell} f_\ell\big)  \leq A \quad \mbox{ for all $m\geq n$ and $n$.}$$
Hence $(a_{m-n} \colon m\geq n)$ is bounded by one if $0<{\cec \varpi} <\nicefrac1{A}$. 
This completes the proof of {\bf (A4)}.

\subsection{Random permutations with random cycle weights} \label{sec:cycles}


\noindent  
Let $\theta\geq 0$ be a fixed parameter and suppose we are given a permutation $\sigma$ of the indices $\{1,\ldots, n\}$ and, for each of the $k$ cycles of the permutation, a weight $W_j$, $j=1,\ldots,k$. Denote the length of the cycles by $Z_1,\ldots,Z_k$. We create a permutation $\sigma'$ of the indices $\{1,\ldots, n+1\}$ from this as follows
\begin{itemize}
\item either pick one of the indices $m\in\{1,\ldots,n\}$ from the $j$th cycle with probability $\frac{W_j}{n+\theta}$ and  insert the new index into its cycle so that we have $\sigma'(m)=n+1, \sigma'(n+1)=\sigma(m)$ and
$\sigma'(i)=\sigma(i)$ for all $i\not=m,n+1$;
\item with the remaining probability $1-\frac{\sum_{j=1}^k Z_j W_j}{n+\theta}$
the new index $n+1$ is mapped onto itself, creating a new cycle {\peter of length one.} This cycle is given a weight $W_{k+1}$ sampled, independently of everything else, from~$\mu$. 
\end{itemize}
The resulting process $(\sigma_n)$ can be seen as a \emph{disordered chinese restaurant process}. The idea is that the cycles correspond to tables and new customers either join a table with a probability proportional to both the weight and the number of seats on the table, or sit at a new table.  In the original chinese restaurant process customers chose to sit on a table with a probability proportional to the number of seats and the probability of introducing a new table is $\frac\theta{n+\theta}$, see~\cite[p. 92]{Exchangeability}. This corresponds to all weights being equal to one in our scenario. We briefly mention that this model differs from the model of Betz, Ueltschi and Velenik
on random permutations with cycle weights, as in their case the weight of a cycle is not random and instead depends on the size of the cycle, see \cite{betz2011}.
\medskip

Let us show that this model falls into our framework of competing growth processes and satisfies Assumptions {\bf (A1-4)}. Key is again an embedding of the process in continuous time such that $T_n$ is the time when the $n$th customer enters the restaurant. We let $T_1=0$ and define $T_{n+1}$, $n\in\N$, inductively as follows. At time $T_{n}$ we start $n+1$ independent exponential clocks, one clock of parameter one for each of the $n$ customers seated in the restaurant and one additional clock of parameter $\theta$ for the creation of additional tables. We let $T_{n+1}$ be the time when the first of these clocks rings. 
\begin{itemize}
	\item If it is the clock corresponding to customer~$m$ sitting at table $j$ we toss a coin with success probability~$W_j$.
	\begin{itemize}
		\item If there is a success the $(n+1)$st customer joins this table, resp.\ in the language of random permutations
		the element $n+1$ is inserted in this cycle between elements $m$ and $\sigma_n(m)$,
		\item if there is no success the $(n+1)$st customer seats at  a new table which, if it is the $(k+1)$st occupied table, gets weight $W_{k+1}$. 
	\end{itemize}
\item If it is the clock for the creation of additional tables, the $(n+1)$st customer also sits at a new table which, if it is the $(k+1)$st occupied table, gets weight $W_{k+1}$.
\end{itemize}
Suppose $W_1, W_2, \ldots$ are given. We note that, as required, the overall probability that a new table is created at time $T_{n+1}$ is 
$$\frac{\sum_{j=1}^k Z_j(T_n) (1-W_j) + \theta}{n+\theta}= 1 - \frac{\sum_{j=1}^k Z_j(T_n) W_j}{n+\theta},$$
where $Z_j(T_n)$ is the number of occupants at the $j$th table at time $T_n$, and the probability that the $(n+1)$st customer joins the $j$th table is $Z_j(T_n)W_j/(n+\theta)$. Looking at the $j$th table, we let $\tau_j$ be the time when it is first occupied. If at time $t$ this table is occupied by $m$ customers
the rate at which new customers join this table is $m W_j$, independent of the occupancy of other tables. The processes $(Z_j(t+\tau_j) \colon t\geq 0)$ are therefore independent Yule processes with rate $W_j$. Hence Assumptions {\bf (A2-4)} are satisfied for $\gamma=1$ and where $X_n(u)=Y_n(u)$, $u\geq 0$, are given by $Z_n(t)=X_n(W_n(t-\tau_n))$.%
\medskip%

Finally, to check Assumption {\bf (A1)} we note that the process of introduction of new tables is a general branching process with immigration. The immigration process corresponds to the creation of the additional tables, which is a homogeneous Poisson process with rate $\theta$. The point process of creation of tables by unsuccessful coin tossing is a Cox process $(\Pi(t) \colon t\geq 0)$, i.e.\ a Poisson process with random intensity. Its intensity is given by $(1-W)Y(t) \, \mathrm dt$ where $W$ has distribution $\mu$ and given $W$ the process $(Y(t) \colon t\geq 0)$ is a Yule process with parameter $W$.
The relevant results for general branching processes can be found in \cite{Nerman} with the case of branching processes with immigration treated in \cite{Olofsson}. The crucial assumption is the existence of a Malthuisan parameter $\alpha\geq 0$ such that
$$1= \int \e^{-\alpha t} \, \E \Pi(\mathrm dt)  = \int  \int_0^\infty (1-w) \e^{-\alpha t} \e^{wt} \, dt \, \mu(\mathrm dw) = \int \frac{1-w}{\alpha-w} \,\mu(\mathrm dw),$$
which is always satisfied for $\alpha=1$. As above {\peter it is a routine exercise to check the $x \log x$ condition on $\int \e^{-t} \Pi(\mathrm dt)$}. 
We obtain from \cite[Theorem 5.4]{Nerman}
for general branching processes without immigration (our case $\theta=0$) and modifications described in \cite[Theorem 4.2]{Olofsson} for the general case (stated there only for convergence in $L^1$) that there exists a positive random variable {\peter $M_\theta$ such that the total number $M(t)$ of tables which have been occupied by time $t$ satisfies
$$\e^{-t} M(t) \longrightarrow M_\theta \quad \mbox{ almost surely,}$$
from which we infer that $\tau_n= \log n - \log M_\theta + o(1)$, which is Assumption~{\bf (A1)} with $\lambda=1$.}

{\cec We now give an example of a result that follows from our main technical result.} We look at the ratio $R(t)$ of the size of the largest and second largest cycle in the permutation at time $t$. {\peter We have
$$\id_{R(t) \geq x} = \int \id_{\Gamma_t([-\infty, \infty]\times [-\infty, \infty]\times (z,\infty))=0}
 \id_{\Gamma_t([-\infty, \infty]\times [-\infty, \infty]\times (z/x,z))=0}  \, \di \Gamma_t(s,f,z).$$}%
If $\mu$ satisfies Assumption {\bf (A5)}, then, by Theorem~\ref{theo:theo_main}, we hence have, for $x>1$,
{\peter with $N$ a Poisson point process with intensity measure~$\zeta$,}
\begin{align*}
\lim_{t\to \infty} \pr\big( R(t) \geq x\big)
& = {\peter \E \int \id_{N([-\infty, \infty]\times [-\infty, \infty]\times (z,\infty))=0}
 \id_{N([-\infty, \infty]\times [-\infty, \infty]\times (z/x,z))=0}  \, N(\di s \: \df \: \di z)} \\
& = {\peter \int} 
\exp\big(- \zeta\big((-\infty,\infty) \times (-\infty,\infty)  \times (z/x,\infty)\big)\big)
\, \zeta(\di s \: \df \: \di z).
\end{align*}
Using that $\nu(x)=\e^{-x}$ and $a_3=1$ in the first equality {(similar as in \eqref{maxintegral} below)} and the change of variable $v = f - \log y$ in the second, we get that
\begin{align*}
\zeta\big((-\infty,\infty) \times (-\infty,\infty)  \times (z/x,\infty)\big)\big) 
& = {\cec \int_{-\infty}^{\infty}\int_{-\infty}^{\infty}} \di s \: \df  \, \e^{s^2 a_2 - 2f} \int_{z/x}^\infty \e^{-y\e^{s^2 a_2-f }}\, \mathrm dy\\
& = {\cec \int_{-\infty}^{\infty}\int_{-\infty}^{\infty}}  \di s\: \di v \, \e^{s^2 a_2 - 2v} \e^{-\e^{s^2 a_2 - v}} \int_{z/x}^{\infty} y^{-2} \,\mathrm dy
= a_5 \,\frac{x}{z},
\end{align*}
where $a_5$ is a positive constant.
Hence, substituting $f$ by $f+\log x$ in the final step,
\begin{align*}
\lim_{t\to \infty} \pr\big( R(t) \geq x\big) &
= {\cec \int_{-\infty}^{\infty}\int_{-\infty}^{\infty}}\di s \: \df  \:  \int_0^\infty \dz \,  \e^{-f} \e^{s^2 a_2 - f } \e^{-z(\e^{s^2 a_2 -f } )- a_5 \frac{x}z}\\
&
= {\cec \int_{-\infty}^{\infty}\int_{-\infty}^{\infty}} \di s \: \df  \: \int_0^\infty \dw \,  \e^{-f}  \e^{-w - a_5 \frac{1}w \e^{s^2 a_2 - f +\log x}}
= \frac1x.
\end{align*}
Similarly, if $\mu$ satisfies the assumptions (B.5),  we have
$\zeta\big((-\infty,\infty) \times (0,\infty)  \times (z/x,\infty)\big)\big) 
= a_6 \,\frac{x}{z} $, and hence by Theorem~1,
\begin{align*}
\lim_{t\to \infty} \pr\big( R(t) \geq x\big) &
= {\cec\int_{-\infty}^{\infty}} \di s \:  \int_0^\infty \df  \:  \int_0^\infty \dz \,  \alpha f^{\alpha-1}\e^{2s+ f } \e^{-z \e^{s+f } - a_6 \frac{x}z}\\
&= {\cec\int_{-\infty}^{\infty}} \di s \: \int_0^\infty   \df  \:  \int_0^\infty \dz \,  {\peter x} \alpha f^{\alpha-1}\e^{2s+ f } \e^{-z \e^{s+f +\log x} - a_6 \frac{1}z} = \frac1{x}.
	\end{align*}
substituting $s$ by {\peter $s-\log x$} in the final step. Note that this is in contrast to the case without disorder
where the cycles have macroscopic size and the distribution of the asymptotic ratio is given by the ratio of the two largest elements in the Poisson-Dirichlet distribution. 
\bigskip

The remainder of the paper is devoted to the proofs of Theorem~\ref{theo:theo_main} and Corollary~\ref{cor:lim_fam} and is structured as follows. In Section~\ref{sec:local_convergence} we look at the Poisson limit theorem given in Theorem~\ref{theo:theo_main}, but first in a space without compactifications. After some preparations we prove in Section~\ref{sec:convergence} a basic form of the limit theorem, see Proposition~\ref{prop:cv_gamma2}. This is derived from an approximation which corresponds to a classical Poisson convergence result for extremes in the first two components and an independent third component. In Section~\ref{sec:proof_local} a further approximation turns the basic form into the original form of the Poisson limit theorem, the crucial difference being that the scaling of the third component becomes independent of the birth rank $n$ of the family. Section~\ref{sec:compactification} is devoted to the compactification of the space, effectively showing that the points suppressed by the scalings do not provide the largest families. These points are either born too late 
(Section~\ref{sec:young}) or not fit enough (Section~\ref{sec:unfit}). In Section~\ref{sec:old} we show that there are no points outside our 
scaling window that are competitive in age and fitness. The proof of Theorem~\ref{theo:theo_main} is completed in Section~\ref{sec:main_proof} and the proof of Corollary~\ref{cor:lim_fam}, which crucially uses the compactification, in Section~\ref{sec:proof_corollary}.

\section{Local convergence of point processes} \label{sec:local_convergence}

In this section we prove {\cec a} convergence result for the point processes $(\Gamma_t)$ 
in a space without compactification. The strengthening of the results by compactification will follow in the next section. We begin by noting some preliminary results on the fitness distribution.

\subsection{Preliminaries on the fitness distribution} \label{sec:preliminaries}
First of all we show the existence and uniqueness of $\sigma_t$ as defined in Equation \eqref{equ:sigma}:
\begin{lem}
For all $t$ large enough, there exists a unique $\hat\sigma_t \in [0, t{\cec )}$, such that 
	\begin{equation*}
		(\log g)'(\lambda \hat\sigma_t) = \frac{1}{\lambda( t-\hat\sigma_t)}.
	\end{equation*}
Furthermore, as $t\uparrow\infty$, we have $\hat\sigma_t \rightarrow \infty$ {\cec (and thus, for all $t$ large enough $\sigma_t = \hat\sigma_t$)} and $\frac{\hat\sigma_t}{t} \to 0$.
	\label{lem:sigma}
\end{lem}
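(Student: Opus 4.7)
The plan is to analyse the function $\Phi_t(x) := (\log g)'(\lambda x) - \frac{1}{\lambda(t-x)}$ on the open interval $(0,t)$, show that it has a unique zero, and then derive the asymptotics of that zero. The argument breaks naturally into a monotonicity step, a boundary-behaviour step, and an asymptotic step.

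First, I would establish strict monotonicity of $(\log g)'$ on $(0,\infty)$. From $g = m^{-1}$ one computes $g'(y) = 1/m'(g(y))$ and $g''(y) = -m''(g(y))/(m'(g(y)))^3$, and Assumption \textbf{(A5.1)} therefore yields $g' > 0$ and $g'' < 0$. Substituting into the identity
\[
(\log g)''(y) = \frac{g(y)\,g''(y) - g'(y)^2}{g(y)^2}
\]
gives a strictly negative quantity, so $\log g$ is strictly concave and $(\log g)'$ is strictly decreasing. Combined with the fact that $x \mapsto 1/(\lambda(t-x))$ is strictly increasing on $(0,t)$, this makes $\Phi_t$ strictly decreasing.

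Next, I would examine the boundary behaviour. Since $m(0)=0$ and $m'(0) \in (0,\infty)$ by \textbf{(A5.1)}, a first-order Taylor expansion at $0$ gives $g(y) \sim y/m'(0)$ as $y \downarrow 0$, hence $(\log g)'(y) = g'(y)/g(y) \sim 1/y \to +\infty$. Therefore $\Phi_t(x) \to +\infty$ as $x \downarrow 0$. At the other endpoint, $(\log g)'(\lambda x)$ stays finite for $x < t$ while $1/(\lambda(t-x)) \to +\infty$, so $\Phi_t(x) \to -\infty$ as $x \uparrow t$. Continuity of $\Phi_t$ and the intermediate value theorem then yield a root in $(0,t)$, and strict monotonicity makes it unique; this root is $\hat\sigma_t$, and this construction actually works for every $t > 0$.

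For the asymptotics, I would rewrite $(\log g)'(y) = 1/(g(y)\,m'(g(y)))$. Since $g(y) \to 1$ as $y \to \infty$ (because $m(x) \to \infty$ as $x \uparrow 1$, as $\mu$ has no atom at $1$), Assumption \textbf{(A5.4)} translates via the substitution $x = g(y)$ into
\[
y (\log g)'(y) = \frac{1}{g(y)}\cdot\frac{m(g(y))}{m'(g(y))} \longrightarrow 0 \quad \text{as } y \to \infty.
\]
In particular $(\log g)'(y) \to 0$. If $\hat\sigma_t$ remained bounded along some subsequence $t_k \to \infty$, strict monotonicity of $(\log g)'$ would keep $(\log g)'(\lambda \hat\sigma_{t_k})$ bounded below by a positive constant, while the right-hand side of the defining equation tends to $0$; contradiction. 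Hence $\hat\sigma_t \to \infty$. Plugging this back and multiplying the defining equation by $\lambda \hat\sigma_t$ yields $\hat\sigma_t/(t-\hat\sigma_t) = \lambda \hat\sigma_t (\log g)'(\lambda \hat\sigma_t) \to 0$, which gives $\hat\sigma_t/t \to 0$.

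The main obstacle is the reinterpretation of Assumption \textbf{(A5.4)} on $m$ as a quantitative statement about $(\log g)'$ near infinity; once this translation is cleanly done via the substitution $x = g(y)$, everything else is standard convex analysis combined with the intermediate value theorem.
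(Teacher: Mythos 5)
Your proof is correct and follows essentially the same route as the paper: existence and uniqueness via monotonicity of $(\log g)'$ plus the boundary behaviour of $F(x)=(\log g)'(x)-\frac{1}{\lambda t-x}$, and the asymptotics of $\hat\sigma_t$ obtained by translating Assumption \textbf{(A5.4)} through the substitution $x=g(y)$. Your treatment of $\hat\sigma_t\to\infty$ (using $y(\log g)'(y)\to 0$ and monotonicity directly, rather than the paper's chain $g'\to 0\Rightarrow m'(g)\to\infty\Rightarrow g\to 1$) is only a minor streamlining of the same argument.
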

\begin{proof}
	{\cec For all $t\geq 0$, for all $x\in[0,\lambda t)$, we set}
	\begin{equation*}
		F(x) := (\log g)'(x) - \frac{1}{\lambda t - x}, 	
	\end{equation*}
so $F$ is continuous on $(0,\lambda t )$, {\cec since, by Assumption {\bf (A5)}, $m$, and thus $g$ are continuous and non-zero on, respectively $(0,1)$ and $(0,\infty)$}. Since {\cec $g = m^{-1} : [0,\infty)\to [0,1)$} and $g(0) = 0$, we have 
{\cec \[\lim_{x \downarrow 0 } (\log g)'(x) =\lim_{x \downarrow 0 } \frac{g'(x)}{g(x)}
= \lim_{x\downarrow 0}\frac1{xm'(x)} = \infty,\]
because $m'(0)<\infty$, since, by Assumption {\bf (A5)}, $m$ is differentiable on $[0,1)$.}
Therefore we get
	\begin{equation*}
		\lim_{x \downarrow 0} F(x) = \infty, 
\quad\text{ and }\quad
		\lim_{x \uparrow \lambda t }F(x) = -\infty.
	\end{equation*}
Hence by continuity of $F$, there exists $x \in (0, \lambda t)$ such that $F(x) = 0$. Furthermore such $x$ is unique because {\cec $F$ is a decreasing function: indeed, for all $x\in(0, \lambda t)$}
\begin{equation*}
	F'(x) = \frac{g''(x)}{g(x)} - \bigg( \frac{g'(x)}{g(x)}\bigg)^2 - \bigg(\frac{1}{\lambda t - x} \bigg)^2 < 0  \quad \text{for all $x \in (0, \lambda t )$,}
\end{equation*}
since \smash{$g''(x) = -\frac{m''(g(x))}{(m'(g(x)))^3}< 0$} by Assumption {\bf (A5.1)}. Setting $\hat\sigma_t = \frac{1}{\lambda} x$ proves existence and uniqueness as required, moreover $\hat\sigma_t$ is increasing in $t$. \smallskip

\noindent
It remains to show that $\hat\sigma_t \rightarrow \infty$ as $t \rightarrow \infty$. 
If $\hat\sigma_t$ was bounded, we had
$\frac{1}{\lambda t - \lambda\hat\sigma_t} \rightarrow 0$ as $t \rightarrow \infty$. This implies that 
$(\log g)'(\lambda \hat\sigma_t) = g'(\lambda \hat\sigma_t)/ g(\lambda \hat\sigma_t)\rightarrow 0$
and hence $g'(\lambda \hat\sigma_t) \rightarrow 0$. This implies $\frac{1}{m'(g(\lambda \hat\sigma_t))} \rightarrow 0$, i.e. $m' \big( g(\lambda \hat\sigma_t)\big) \rightarrow \infty$. From Assumption~{\bf (A5.4)}, we know that $m'(x) \uparrow \infty$ as $x \uparrow 1$ and
therefore $g(\lambda \hat\sigma_t) \rightarrow 1$ 
and hence $\hat\sigma_t \rightarrow \infty$ as required.\smallskip

\noindent
Finally we show that $\frac{\hat\sigma_t}{t}\to 0$. By definition of $\hat\sigma_t$, we have 
$		t = \hat\sigma_t + \frac{g(\lambda \hat\sigma_t)}{\lambda g'(\lambda \hat\sigma_t)},$
so we can write
	\begin{equation*}
		\lim_{t \rightarrow \infty} \frac{\hat\sigma_t}{t} = \lim_{t \rightarrow \infty} \frac{\hat\sigma_t}{\hat\sigma_t + \frac{g(\lambda \hat\sigma_t)}{\lambda g'(\lambda \hat\sigma_t)}} = \lim_{t \rightarrow \infty} \frac{1}{1 + \frac{g(\lambda \hat\sigma_t)}{ \lambda \hat\sigma_t g'(\lambda \hat\sigma_t)}}.
	\end{equation*}
	As  $\hat\sigma_t\to \infty$ as have $g(\lambda \hat\sigma_t)\to 1$ as $t \rightarrow \infty$ we get
	\begin{equation*}
		\lim_{t \rightarrow \infty} \frac{\hat\sigma_t}{t} = \lim_{x \uparrow 1} \frac{1}{1 + \frac{x}{\frac{m(x)}{m'(x)}}} = 0,
	\end{equation*}
since $\lim_{x\uparrow 1} \frac{m(x)}{m'(x)} = 0$ by Assumption {\bf (A5.4)}.
\end{proof}

\noindent
From Lemma \ref{lem:sigma}, it follows that $\lambda t \sim \frac{g(\lambda \sigma_t)}{g'(\lambda \sigma_t)}$ as $t \rightarrow \infty$. 
Since {\cec $\lim_{t\uparrow\infty}g(\lambda \sigma_t ) = 1$}, we get that
\begin{equation}
	g'(\lambda \sigma_t) \sim \frac{1}{\lambda t} {\cec \quad\text{ when }t\uparrow \infty}.
	\label{equ:c1}
\end{equation}

\begin{lem} We have 
	\begin{equation} 
		\lim_{t\rightarrow \infty}\sigma_t  t g'' (\lambda \sigma_t) = - \varkappa \lambda^{-2},
	\label{equ:small_o1}
	\end{equation} 
where $\varkappa$ is defined in Assumption {\bf (A5.3)}, and 
	\begin{equation}
		\lim_{t\rightarrow \infty} \sigma_t g'(\lambda \sigma_t) = 0.
	\label{equ:small_o2}
	\end{equation} 
\label{lem:small_o}	
\end{lem}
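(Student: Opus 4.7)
The strategy is to rewrite everything in terms of $m$, $m'$, $m''$ evaluated at $y:=g(\lambda\sigma_t)$ (which tends to $1$ as $t\to\infty$ by the proof of Lemma~\ref{lem:sigma}), and then apply Assumptions {\bf (A5.3)} and {\bf (A5.4)}. Since $g=m^{-1}$, differentiation of the identity $m(g(x))=x$ gives
\begin{equation*}
g'(x)=\frac{1}{m'(g(x))},\qquad g''(x)=-\frac{m''(g(x))}{(m'(g(x)))^{3}}.
\end{equation*}
Moreover, the defining equation \eqref{equ:sigma} can be rearranged as $\lambda t - \lambda\sigma_t = g(\lambda\sigma_t)/g'(\lambda\sigma_t) = y\, m'(y)$, so that $\lambda\sigma_t=m(y)$ and $\lambda t=m(y)+y\,m'(y)$. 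These two identities are the workhorse of the proof.

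First I would dispose of \eqref{equ:small_o2}: by \eqref{equ:c1} we have $g'(\lambda\sigma_t)\sim 1/(\lambda t)$, so $\sigma_t g'(\lambda\sigma_t)\sim \sigma_t/(\lambda t)$, which tends to $0$ by the last statement of Lemma~\ref{lem:sigma}.

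For \eqref{equ:small_o1}, substituting the identities above gives
\begin{equation*}
\sigma_t\, t\, g''(\lambda\sigma_t)
= -\frac{m(y)\,[m(y)+y\,m'(y)]}{\lambda^{2}}\cdot\frac{m''(y)}{(m'(y))^{3}}
= -\frac{1}{\lambda^{2}}\cdot\frac{m(y)\,m''(y)\,y}{(m'(y))^{2}}\cdot\left[\frac{m(y)}{y\,m'(y)}+1\right].
\end{equation*}
As $t\to\infty$ we have $y\to 1$, so by Assumption {\bf (A5.3)} the factor $m(y)m''(y)y/(m'(y))^{2}$ converges to $\varkappa$, while Assumption {\bf (A5.4)} (together with $y\to 1$) forces $m(y)/(y\,m'(y))\to 0$, leaving the bracket with limit $1$. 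Putting these together yields the claim $-\varkappa\lambda^{-2}$.

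There is no real obstacle; the only point that needs a little care is ensuring that all the limits above are taken in the correct order. This is automatic once one observes that $y=g(\lambda\sigma_t)\uparrow 1$ monotonically (which follows from Lemma~\ref{lem:sigma} together with monotonicity of $g$), so that {\bf (A5.3)} and {\bf (A5.4)} can be applied directly to the composed quantities.
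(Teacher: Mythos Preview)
Your proof is correct and follows essentially the same route as the paper: rewrite $g'$ and $g''$ in terms of $m',m''$ via differentiating $m(g(x))=x$, set $y=g(\lambda\sigma_t)\to 1$, and invoke {\bf (A5.3)} and {\bf (A5.4)}. The only cosmetic difference is that you use the exact identity $\lambda t=m(y)+y\,m'(y)$ from \eqref{equ:sigma}, which produces the extra bracket $[m(y)/(y\,m'(y))+1]\to 1$, whereas the paper uses the asymptotic $t\sim g(\lambda\sigma_t)/(\lambda g'(\lambda\sigma_t))$ directly, absorbing that correction into the $\sim$; both lead to the same limit.
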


\begin{proof} 
{\cec Recall that, by Lemma~\ref{lem:sigma}, for all $t$ large enough,} 
$\sigma_t =m(g(\lambda \sigma_t ))/\lambda$, $t \sim \frac{g(\lambda \sigma_t)}{ \lambda g'(\lambda \sigma_t)}$ {\cec as $t\uparrow\infty$},
and
\begin{equation*}
	g''(x) = -\frac{m''(g(x))}{(m'(g(x)))^3} = -\frac{m''(g(x))g'(x)}{(m'(g(x)))^2} 
	\quad{\cec\text{ for all }x\in[0,\infty)},
\end{equation*}
since $m'\circ g = 1/g'$. Substituting these into \eqref{equ:small_o1} and substituting $x = g(\lambda \sigma_t)$, we get
\begin{equation*}
	\lim_{t \rightarrow \infty} \sigma_t  t g'' (\lambda \sigma_t) = \lim_{t \rightarrow \infty}- \frac{ m(g(\lambda \sigma_t)) g(\lambda \sigma_t) m''(g(\lambda \sigma_t)) }{ (\lambda m'(g(\lambda \sigma_t))^2} = \lim_{x\uparrow 1} - \frac{ m''(x)m(x) x}{(\lambda m'(x))^2} = -\varkappa \lambda^{-2},
	\end{equation*}
by Assumption {\bf (A5.3)}. Similarly, using $g'(\lambda \sigma_t) = \frac{1}{m'(g(\lambda \sigma_t))}$, we have
\begin{eqnarray*}
	\lim_{t\rightarrow \infty} \sigma_t g'(\lambda \sigma_t) = \lim_{t \rightarrow \infty} \frac{m(g(\lambda \sigma_t))}{\lambda m'(g(\lambda \sigma_t ))} = \lim_{x \uparrow 1} \frac{m(x)}{\lambda m'(x)} = 0,
\end{eqnarray*}
by Assumption {\bf (A5.4)}.
\end{proof}


\subsection{Convergence of a simpler point process} \label{sec:convergence}

In this section we prove the following proposition, which gives a more basic form of the Poisson limit in
a space without compactification.
\begin{prop}[]
We have vague convergence in distribution of the point process
	\begin{equation*}
		\Psi_t = \sum^{M(t)}_{n=1} \delta\Big( \frac{\tau_n - \sigma_t } {\sqrt{\sigma_t }}, \frac{F_n - g \big(\log (n \sqrt{\sigma_t }) \big)}{g' \big (\log (n \sqrt{\sigma_t }) \big)}, \e^{-\gamma F_n(t-\tau_n)} Z_n(t) \Big)
	\end{equation*}
to the Poisson point process with intensity 
	\begin{equation*}
		\zeta^*(\di s, \: \df, \: \di z) = \lambda \e^{-f} \nu(z) \di s \: \df \: \di z,
	\end{equation*}
on $(-\infty, \infty) \times (-\infty, \infty] \times [0, \infty]$.
	\label{prop:cv_gamma2}
\end{prop}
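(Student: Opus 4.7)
The plan is to prove Poisson convergence via a three-step reduction to a triangular array of approximately independent atoms, followed by a direct computation of the limiting intensity and an application of a classical triangular-array criterion. First I use Assumption {\bf (A1)} to write $\tau_n = \tau_n^* + T + \varepsilon_n$, so that $(T + \varepsilon_n)/\sqrt{\sigma_t}$ vanishes uniformly for $n$ in any bounded range of the first coordinate, and the first coordinate of the $n$th atom of $\Psi_t$ is asymptotically the deterministic $(\tau_n^* - \sigma_t)/\sqrt{\sigma_t}$. Next I use Assumption {\bf (A2)} and the window \eqref{equ:IA} to replace $X_n$ by the independent $Y_n$: \eqref{equ:conv} ensures $|X_n(u)-Y_n(u)| \le \varepsilon \e^{\gamma u}$ for the relevant $u$'s, with high probability uniformly over $n \in \bor{I}(t)$. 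Finally, by Assumption {\bf (A3)}, since $F_n(t-\tau_n) \to \infty$ uniformly for $n \in \bor{I}(t)$, we have $\e^{-\gamma F_n(t-\tau_n)} Y_n(F_n(t-\tau_n)) \to \xi_n$ almost surely, where $(\xi_n)$ are i.i.d.\ copies of $\xi$ with density $\nu$ and independent of $(F_n)$.

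These reductions replace $\Psi_t$ (up to vanishing errors) by a point process whose atoms are independent across $n$ given $T$. To identify the limit intensity on a bounded rectangle $R = (a_1,a_2] \times (b_1,b_2] \times (c_1,c_2]$, I set $L_n := \log(n\sqrt{\sigma_t})$ and use $\mu((x,1]) = \e^{-m(x)}$ together with a Taylor expansion of $m$ around $g(L_n)$. The identities $m(g(L_n)) = L_n$ and $m'(g(L_n)) g'(L_n) = 1$ yield
\begin{equation*}
m\bigl(g(L_n) + g'(L_n) f\bigr) = L_n + f + \tfrac{1}{2} \tfrac{m''(g(L_n))}{(m'(g(L_n)))^2} f^2 + \cdots = L_n + f + o(1),
\end{equation*}
uniformly for $f$ in a bounded range, by Assumption {\bf (A5.2)} and $g(L_n) \to 1$. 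Consequently
\begin{equation*}
\pr\Bigl( \tfrac{F_n - g(L_n)}{g'(L_n)} \in (b_1,b_2] \Bigr) = \tfrac{(1 + o(1)) (\e^{-b_1} - \e^{-b_2})}{n \sqrt{\sigma_t}}.
\end{equation*}
Summing over those $n$ with $(\tau_n^* - \sigma_t)/\sqrt{\sigma_t} \in (a_1,a_2]$, i.e.\ $n \in \bigl(\e^{\lambda(\sigma_t + a_1\sqrt{\sigma_t})}, \e^{\lambda(\sigma_t + a_2\sqrt{\sigma_t})}\bigr]$, the partial harmonic sum $\sum 1/n \sim \lambda(a_2 - a_1)\sqrt{\sigma_t}$ produces the factor $\lambda(a_2 - a_1)$; multiplying by the independent factor $\pr(\xi_n \in (c_1,c_2]) = \int_{c_1}^{c_2} \nu(z) \, \di z$ recovers $\zeta^*(R)$.

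Since the atoms of the reduced process are conditionally independent across $n$ (given $T$) and each contributes to $R$ with vanishing probability, a standard triangular-array Poisson criterion (Kallenberg-type, via void probabilities) gives $\pr(\Psi_t(R) = 0) \to \e^{-\zeta^*(R)}$; the same computation applied to finite disjoint unions of such rectangles yields vague convergence in distribution to the Poisson point process with intensity $\zeta^*$.

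I expect the main obstacle to be the uniformity of the Taylor expansion in the intensity computation: $L_n$ varies by $\lambda \cdot \mathcal{O}(\sqrt{\sigma_t})$ across the relevant summation range, so I need to verify that $g(L_n) \to 1$ and the decay rate from Assumption {\bf (A5.2)} apply uniformly in $n$; Lemma~\ref{lem:small_o} and Equation~\eqref{equ:c1} are the essential tools for this. A secondary technicality is transferring the $\Delta_n$-control in~\eqref{equ:conv}, which is stated at argument~$t$, to the argument $F_n(t-\tau_n) < t$ at which it is actually needed in the reduction, using the monotonicity-like structure of $\Delta_n$ together with the fact that $F_n(t-\tau_n)$ still tends to infinity uniformly on~$\bor{I}(t)$.
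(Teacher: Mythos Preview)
Your proposal is correct and follows essentially the same route as the paper: the paper also reduces $\Psi_t$ to the simplified process $\Psi_t^*$ with atoms $\bigl((\tau_n^*-\sigma_t)/\sqrt{\sigma_t},\, (F_n-g(L_n))/g'(L_n),\, \xi_n\bigr)$ via {\bf (A1)}--{\bf (A3)} (their Lemma~\ref{lem_9}), and then proves Poisson convergence of $\Psi_t^*$ by Kallenberg's criterion using the same Taylor expansion $m(g(L_n)+g'(L_n)f)=L_n+f+o(1)$ from {\bf (A5.2)} (their Lemma~\ref{lem:Kallenberg}). Two minor caveats: Kallenberg's criterion requires both void-probability convergence \emph{and} convergence of mean measures $\E[\Psi_t^*(R)]\to\zeta^*(R)$, so you should state the latter explicitly (it follows from the same harmonic-sum computation you already did); also, your rectangle endpoints $a_1,a_2$ clash with the paper's constants $a_1=\gamma/(2\lambda)$, $a_2=\gamma\varkappa/2$, so rename them.
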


We prove Proposition \ref{prop:cv_gamma2} in two steps:
\begin{enumerate}[(1)]
	\item In Lemma \ref{lem_9} we approximate $\Psi_t$ by the point process 
		\begin{equation*}
			\Psi^*_t =  \sum_{n \in \N} \delta\bigg( \frac{\frac{1}{\lambda} \log n -  \sigma_t  }{\sqrt{\sigma_t }}, 
			\frac{F_n - g \big(\log(n \sqrt{\sigma_t })\big)}{g' \big (\log(n \sqrt{\sigma_t }) \big)}, \xi_n \bigg),
		\end{equation*}
	where we have replaced the rescaled family sizes $\e^{-\gamma F_n (t-\tau_n)} Z_n(t)$ by their limits, denoted $\xi_n$, and the birth times $\tau_n$ by the approximate birth times $\frac{1}{\lambda} \log n$, using Assumptions {\bf (A3)} and {\bf (A1)} respectively.
	\item In Lemma \ref{lem:Kallenberg} we prove that $\Psi^*_t$ converges to the Poisson point process with intensity $\zeta^*$.
\end{enumerate}

\begin{lem}
	{\peter As as $t\to\infty$  the process $(\Psi^*_t)_{t\ge0}$ converges vaguely in distribution}
on $(-\infty, \infty) \times (-\infty, \infty] \times [0, \infty] $ to the Poisson point process with intensity $\zeta^*$. 
	\label{lem:Kallenberg}
\end{lem}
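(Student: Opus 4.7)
The plan is to verify a Kallenberg-type Poisson convergence criterion: vague convergence in distribution of $\Psi_t^*$ to the stated Poisson point process follows once one shows, for a rich enough class of compact rectangles $A$ with $\zeta^*(\partial A) = 0$, that $\E[\Psi_t^*(A)] \to \zeta^*(A)$ and $\pr(\Psi_t^*(A) = 0) \to \e^{-\zeta^*(A)}$. The structural observation that makes both ingredients directly computable is that the first coordinate $s_n := (\frac{1}{\lambda}\log n - \sigma_t)/\sqrt{\sigma_t}$ is deterministic in $n$, the second coordinate $f_n := (F_n - g(\log(n\sqrt{\sigma_t})))/g'(\log(n\sqrt{\sigma_t}))$ depends only on $F_n$, and the third coordinate $\xi_n$ is independent of $F_n$ by Assumption~\textbf{(A2)}. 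Consequently the triples $(s_n, f_n, \xi_n)$ are independent across $n$ with $f_n$ and $\xi_n$ independent for each $n$, so the void probability factorises as $\pr(\Psi_t^*(A) = 0) = \prod_n (1 - p_n)$ with $p_n := \pr((s_n, f_n, \xi_n) \in A)$.

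The central technical input is a uniform Gumbel-type tail estimate for $f_n$. Writing $u_n := \log(n\sqrt{\sigma_t})$, a second-order Taylor expansion of $m$ around $g(u_n)$ combined with the identity $m'(g(u)) g'(u) = 1$ yields
\[
m\bigl(g(u_n) + g'(u_n) f\bigr) = u_n + f + O\!\left(\frac{m''(g(u_n))}{m'(g(u_n))^2}\, f^2\right).
\]
Assumption \textbf{(A5.2)} makes the error term vanish uniformly for $f$ in compact intervals as $u_n \to \infty$, so that
\[
\pr(f_n > f) = \e^{-m(g(u_n)+g'(u_n)f)} = \frac{\e^{-f}}{n\sqrt{\sigma_t}}\,(1 + o(1))
\]
uniformly in $f$ on compacts and uniformly over $n$ in the relevant range. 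An analogous expansion of $m'$ gives the matching density asymptotic $\mu_n(\di f) = (1+o(1))\,\frac{\e^{-f}}{n\sqrt{\sigma_t}}\,\di f$.

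Granted this tail estimate, condition (i) is a direct computation: for $A = [a,b]\times[c,d]\times B$ with $B$ a $\nu$-continuity subset of $[0,\infty]$,
\[
\E[\Psi_t^*(A)] = \pr(\xi_1 \in B)\!\!\sum_{n:\, s_n \in [a,b]}\! \pr(f_n \in [c,d]).
\]
The sum ranges over $n \in [\e^{\lambda(\sigma_t + a\sqrt{\sigma_t})}, \e^{\lambda(\sigma_t + b\sqrt{\sigma_t})}]$ and successive $s_n$ are spaced by $\frac{1}{\lambda n\sqrt{\sigma_t}}$, so plugging in the tail estimate and recognising a Riemann sum yields
\[
\E[\Psi_t^*(A)] \longrightarrow \pr(\xi_1 \in B)\cdot \lambda(b-a)(\e^{-c} - \e^{-d}) = \zeta^*(A).
\]
For condition (ii), the same estimate shows $p_n = O(1/(n\sqrt{\sigma_t}))$ uniformly over the summation range, hence $\max_n p_n \to 0$, and by independence
\[
\log\pr(\Psi_t^*(A) = 0) = \sum_n \log(1 - p_n) = -\sum_n p_n + O\!\Bigl(\max_n p_n \cdot \sum_n p_n\Bigr) \longrightarrow -\zeta^*(A).
\]

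The main obstacle is establishing the Gumbel tail with the claimed uniformity over both $n$ (in a window of width of order $\sqrt{\sigma_t}$ in $\log n/\lambda$ around $\sigma_t$) and $f$ (in arbitrary compact intervals), and then propagating this uniformity through the Riemann-sum step. This hinges on Assumption \textbf{(A5.2)} controlling $m''/(m')^2$ near one, together with the asymptotics $g(\lambda\sigma_t)\to 1$, $g'(\lambda\sigma_t)\sim (\lambda t)^{-1}$ and $\sigma_t g'(\lambda \sigma_t) \to 0$ supplied by Lemmas~\ref{lem:sigma} and~\ref{lem:small_o}, which together guarantee that every $n$ in the window has $u_n = \log(n\sqrt{\sigma_t})$ large enough for the Taylor remainder to be negligible; Assumption \textbf{(A5.4)} ensures the window sits deep inside the region where the tail expansion is valid.
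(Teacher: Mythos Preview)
Your proposal is correct and follows essentially the same route as the paper: Kallenberg's criterion reduces the problem to computing $\E[\Psi_t^*(A)]$ and $\pr(\Psi_t^*(A)=0)$ on boxes, and both quantities are handled via the second-order Taylor expansion $m(g(u_n)+g'(u_n)f)=u_n+f+o(1)$ (using $m'(g(u))g'(u)=1$ and Assumption~\textbf{(A5.2)}), followed by replacing the sum over $n$ by an integral. The paper organises the computation slightly differently---it first proves convergence of the two-dimensional projection $\hat\Psi_t^*$ and then appends the independent third coordinate---but the analytic content is identical; note also that the extra asymptotics from Lemmas~\ref{lem:sigma} and~\ref{lem:small_o} you invoke at the end are not actually needed here, since the only input required is $u_n\to\infty$ uniformly over the window, which follows directly from $\sigma_t\to\infty$.
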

\begin{proof}
{\peter We apply Kallenberg's theorem, see \cite[Proposition 3.22]{Resnick}. Since $\zeta^*$ is diffuse it suffices}
 to show that, for every precompact relatively open box $B \subset (-\infty, \infty) \times (-\infty, \infty] \times [0, \infty]$, we have 
	\begin{enumerate}[(a)]
		\item $\pr(\Psi^*_t(B) = 0 ) \rightarrow \exp (-\zeta^*(B))$, as $t \uparrow \infty$, and
		\item $\E[\Psi^*_t(B)] \rightarrow \zeta^* (B )$, as $t \uparrow \infty$. 
	\end{enumerate}
It further suffices to consider nonempty boxes $B$ of the form $(s_0, s_1) \times (f_0, f_1) \times (z_0, z_1)$, where $s_0, s_1 \in (-\infty, \infty), f_0, f_1 \in (-\infty, \infty]$,  $z_0, z_1 \in[ 0, \infty]$, and $s_0<s_1$, $f_0 < f_1$, $z_0 < z_1$.
Note that 
	\begin{equation*}
		\zeta^*(B) = \lambda (s_1-s_0)\big(\e^{-f_0} - \e^{-f_1}\big)\int_{z_0}^{z_1} \nu (x) \dx.
	\end{equation*}
	
	\begin{enumerate}[(a)]
		\item 
		Let 
		\begin{equation*}
			r(a) := \exp \big( \lambda (a \sqrt{\sigma_t } +  \sigma_t ) \big), \quad \text{for all $a{\cec \in\mathbb R}$,}
		\end{equation*}
and consider 
	\begin{eqnarray*}
		& & \hat{\Psi}^*_t =  \sum_{n \in \N} \delta\bigg( \frac{ \frac{1}{\lambda} \log n -  \sigma_t  }{\sqrt{\sigma_t }}, 
			\frac{F_n - g\big(\log(n \sqrt{\sigma_t })\big)}{g'\big(\log(n \sqrt{\sigma_t })\big)} \bigg),\\
		&&  \hat{\zeta}^*(\di s, \: \df) = \lambda \e^{-f}\di s \: \df.		
	\end{eqnarray*}
So that for $\hat{B}  =  (s_0, s_1) \times (f_0, f_1)$, we get $\hat{\zeta}^*(\hat{B}) =  \lambda (s_1 - s_0)\big( \e^{-f_0} - \e^{-f_1} \big)$. Denote, for all $a>0$, {\cec $x\in\mathbb R$}, 
\begin{equation*}
	\hat{f}_a(x) = g\big( \log(a \sqrt{\sigma_t }) \big) + x g' \big( \log(a \sqrt{\sigma_t })\big).
\end{equation*}
Then we have 
	\begin{eqnarray*}
		\pr(\hat{\Psi}^*_t(\hat{B}) = 0) & = & \prod^{r(s_1)}_{n=r(s_0)} \pr \bigg( \frac{F_n - g\big(\log(n \sqrt{\sigma_t })\big)}{g'\big(\log(n \sqrt{\sigma_t })\big)} \notin (f_0, f_1 )\bigg) \\
		& = & 
 \prod^{r(s_1)}_{n=r(s_0)} \Big[ 1 - \mu\big( \hat{f}_n(f_0), \hat{f}_n(f_1)\big) \Big]\\
		&\sim & \prod ^{r(s_1)}_{n=r(s_0)} \exp \Big\{- \mu\big( \hat{f}_n(f_0), 1 \big) + \mu\big( \hat{f}_n(f_1),1 \big) \Big\}.
	\end{eqnarray*}
Recalling that $\mu(x,1) = \e^{-m(x)} $, we get the following: {\cec when $t\to\infty$,}
	\begin{eqnarray*}
		\pr(\hat{\Psi}_t^*(\hat{B}) = 0) & \sim & \exp \bigg\{ \sum^{r(s_1)}_{n=r(s_0)} - \e^{ - m\big( \hat{f}_n(f_0)\big)} + \e^{ - m\big( \hat{f}_n(f_1)\big)}\bigg\} \\
		&  \sim & \exp \bigg\{ - \int^{r(s_1)}_{r(s_0)} \e^ {- m \big(\hat{f}_x(f_0) \big)} \dx + \int^{r(s_1)}_{r(s_0)}\e^ {- m \big(\hat{f}_x(f_1) \big)}  \dx \bigg \}.
	\end{eqnarray*}
We now evaluate the integrals in the exponent. For $i =0,1$ we have
\begin{align*}
	\int^{r(s_1)}_{r(s_0)} \e^ {- m \big(\hat{f}_x(f_i) \big)} \dx& =  \int^{r(s_1)}_{r(s_0)} \exp \Big\{ - m \Big( g\big( \log(x \sqrt{\sigma_t }) \big) + f_ig' \big( \log(x \sqrt{\sigma_t })\big) \Big) \Big\} \, \dx \\
	& =  \int^{s_1}_{s_0} \lambda \sqrt{\sigma_t} r(y) \exp \Big\{ - m \Big( g\Big( \log\big(r(y)\sqrt{\sigma_t }\big) \Big) + f_ig' \Big( \log\big(r(y) \sqrt{\sigma_t }\big)\Big) \Big) \Big\} \,\dy,
\end{align*}	
by a change of variables, with $x=\e^{\lambda (y \sqrt{\sigma_t} + \sigma_t)} = r(y)$. 
By the mean value theorem, for each $i\in\{0,1\}$, there exists a constant $c_3 \in \big[g\big( \log\big(r(y)\sqrt{\sigma_t }\big) \big), g\big( \log\big(r(y)\sqrt{\sigma_t }\big) \big) + f_0g' \big( \log \big(r(y)\sqrt{\sigma_t }\big)\big) \big]$, such that 
\begin{align*}
m \Big( g\big( & \log\big(r(y)\sqrt{\sigma_t }\big) \big) + f_ig' \big( \log \big(r(y)\sqrt{\sigma_t }\big)\big) \Big) \\
	& =  m \Big( g\big( \log(r(y) \sqrt{\sigma_t }) \big) \Big) + f_ig' \big( \log(r(y) \sqrt{\sigma_t })\big) m' \Big(g \big(\log (r(y) \sqrt{\sigma_t })\big) \Big) \\
& \phantom{quatsch}+ \sfrac{1}{2} \Big(f_i g' \big(\log(r(y) \sqrt{\sigma_t }) \big)\Big)^2 m''(c_3).
	\end{align*}	
Recall that, for $x \in \R$, we have $m(g(x)) = x$ and $g'(x) = \frac{1}{m'(g(x))}$, so the integral simplifies to
	\begin{eqnarray*}
		\int^{r(s_1)}_{r(s_0)} \e^ {- m \big(\hat{f}_x(f_i) \big)} \dx& = &  \int_{s_0}^{s_1}  \lambda \: r(y) \sqrt{\sigma_t}  \: \e^{ -\log(r(y) \sqrt{\sigma_t }) - f_i -  \frac{1}{2}\big(f_i g' (\log(r(y) \sqrt{\sigma_t }))\big)^2 m''(c_3)} \: \dy  \\	
		& = & \lambda \:  \int^{s_1}_{s_0} \exp \Big \{ - f_i - \sfrac{f_i^2}{2} g' \big(\log(r(y) \sqrt{\sigma_t }) \big)^2 m''(c_3)\Big \}  \:\dy . 
	\end{eqnarray*}	
{\cec Recall that $c_3 \in \big[g\big( \log\big(r(y)\sqrt{\sigma_t }\big) \big), g\big( \log\big(r(y)\sqrt{\sigma_t }\big) \big) + f_0g' \big( \log \big(r(y)\sqrt{\sigma_t }\big)\big) \big]$. By Assumption {\bf (A5.4)} and since $\lim_{x\uparrow 1}m(x) = \infty$, we have $\lim_{x\uparrow\infty}g'(x) = 0$, and thus $c_3\sim g\big( \log\big(r(y)\sqrt{\sigma_t }\big) \big)$ when $t\uparrow\infty$.} By Assumption~{\bf (A5.2)}, we thus get
$$g' \big(\log( r(y) \sqrt{\sigma_t }) \big)^2 m''(c_3) = \frac{m''(c_3)}{ m'\big(g(\log(r(y) \sqrt{\sigma_t }))\big)^2} \rightarrow 0,$$ 
as $t \rightarrow \infty$. By the dominated convergence theorem, as $t \rightarrow \infty,$ we get 
	\[
		\int^{r(s_1)}_{r(s_0)} \e^ {- m \big(\hat{f}_x(f_i) \big)} \dx
		= \lambda (s_1 - s_0) \e^{-f_i} + o(1). 
	\]
Therefore, as $t \rightarrow \infty$ we get 
	\begin{eqnarray*}
		\pr(\hat{\Psi}_t^*(\hat{B}) = 0) & \sim &   \exp \bigg\{ - \lambda (s_1 - s_0) \e^{-f_0} + \lambda (s_1 - s_0) \e^{-f_1} + o(1)\bigg\}\\
		& \rightarrow &   \exp \bigg\{ - \lambda (s_1 - s_0) \big( \e^{-f_0} - \e^{-f_1}) \bigg\} = \exp \big\{- \hat \zeta^* (\hat B) \big\}.
	\end{eqnarray*}	
{Using Kallenberg's theorem, we thus get that, in distribution when $t\to\infty$, $\hat \Psi_t^*$ converge vaguely on $(-\infty, \infty)\times (-\infty, \infty]$ to the Poisson point process of intensity $\hat\zeta^*$.}
By assumption, $(F_n, \xi_n)_{n\ge1}$ is a sequence of i.i.d.\ random variables with each $F_n$ being independent of $\xi_n$. Together with the fact that $\pr(\xi_n \in (z_0, z_1)) = \int_{z_0}^{z_1} \nu(x) \dx$, this completes the proof of (a). 
	\item To calculate the limit of $\E[\Psi^*_t(B)]$ we apply similar asymptotic estimates as in part (a), and get that, {\cec when $t\uparrow\infty$}
		\begin{eqnarray*}
			\E[\Psi_t^*(B)] & = & \sum_{r(a_0 ) \le n \le r(a_1)} \mu \Big( \hat{f}_n(f_0), \hat{f}_n(f_1) \Big) \times \pr(\xi_1 \in [z_0, z_1]) \\
			& \sim & \int_{r(s_0)}^{r(s_1)} \mu \Big( \hat{f}_x(f_0), \hat{f}_x(f_1) \Big) \times \pr(\xi_1 \in [z_0, z_1]) \dx \\
			& \sim & \lambda (s_1 - s_0)\big( \e^{-f_0} - \e^{-f_1}\big) \int_{z_0}^{z_1} \nu(x) \dx 
			= \zeta^*(B).
		\end{eqnarray*}
	\end{enumerate}
\ \\[-1.3cm]
\end{proof}
\medskip

\begin{lem}[]
For all Lipschitz continuous, compactly supported functions $f:(-\infty, \infty) \times (-\infty, \infty] \times [0, \infty] \rightarrow \R$,
\begin{equation*}
	\bigg | \int f \di \Psi^*_t - \int f \di \Psi_t \bigg | \rightarrow 0 \text{ in probability, as } t \uparrow \infty.
\end{equation*}
\label{lem_9}
\end{lem}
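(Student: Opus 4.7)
The plan is to exploit the compact support and Lipschitz continuity of $f$ to reduce the assertion to an atom-by-atom comparison. Since the $n$-th atoms of $\Psi_t^*$ and $\Psi_t$ share the same index $n$ and the same second coordinate $(F_n - g(\log(n\sqrt{\sigma_t})))/g'(\log(n\sqrt{\sigma_t}))$, one has
\[\Big|\int f\, \di \Psi_t^* - \int f\, \di \Psi_t \Big|\leq L \sum_{n \in J(t)} d_n(t),\]
where $L$ is the Lipschitz constant of $f$, $d_n(t)$ denotes the distance in the product metric between the atoms with common index $n$, and $J(t)$ is the (random) set of indices for which at least one of the two atoms lies in $\mathrm{supp}(f)$.

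Fix $C>0$ with $\mathrm{supp}(f)\subseteq [-C,C]\times [-C,\infty]\times [0,\infty]$. By the first-coordinate support constraint and Assumption {\bf (A1)}, for $t$ large enough with probability tending to one, $J(t)\subseteq I(t):= \{n : |\tau_n^*-\sigma_t|\leq (C+1)\sqrt{\sigma_t}\}\subseteq \bor{I}(t)$ with $\kappa=C+1$. The first-coordinate discrepancy equals $|T+\varepsilon_n|/\sqrt{\sigma_t}$, which vanishes uniformly in $n\in I(t)$ because $T$ is finite almost surely, $\varepsilon_n\to 0$ by {\bf (A1)}, and $\inf I(t)\to\infty$ by Lemma~\ref{lem:sigma}. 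For the third-coordinate discrepancy, the second-coordinate support condition forces $F_n\geq g(\log(n\sqrt{\sigma_t})) - Cg'(\log(n\sqrt{\sigma_t}))$; since $g\to 1$ and $g'\to 0$ at infinity, this gives $F_n=1-o(1)$ uniformly, and hence $u_n:=F_n(t-\tau_n)=(t-\sigma_t)(1+o(1))\to\infty$. Then
\[\big|\e^{-\gamma u_n} X_n(u_n) - \xi_n\big| \leq \Delta_n(u_n) + \big|\e^{-\gamma u_n} Y_n(u_n) - \xi_n\big|,\]
and both summands tend to $0$ in probability, the first via {\bf (A2)} combined with the monotonicity of $\Delta_n$, the second via {\bf (A3)}.

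It remains to check that $|J(t)|$ is stochastically bounded as $t\to\infty$. This follows from a direct computation in the same spirit as the proof of Lemma~\ref{lem:Kallenberg}, showing that $\mathbb E|J(t)|$ converges to a finite limit. Combining this with the uniform control of $d_n(t)$ and the Lipschitz bound yields the claim.

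The main obstacle is the uniform estimate of $\Delta_n(u_n)$: since $F_n\leq 1$ forces $u_n\leq t$, the monotonicity of $\Delta_n$ does not give $\Delta_n(u_n)\leq \Delta_n(t)$, so {\bf (A2)} cannot be applied at time $t$ itself. Instead one chooses a reference time $t'=t-o(t)$ such that $u_n\geq t'$ for all relevant $n$ while $I(t)\subseteq \bor{I}(t')$, ensuring that $\Delta_n(t')$ (and hence $\Delta_n(u_n)\leq \Delta_n(t')$) is small uniformly in $n\in I(t)$.
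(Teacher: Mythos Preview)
Your proposal is correct and follows essentially the same route as the paper: a Lipschitz bound reduces to summing atom-wise discrepancies over the random index set where at least one atom meets the support, the first-coordinate error $|T+\varepsilon_n|/\sqrt{\sigma_t}$ vanishes by {\bf (A1)}, the third-coordinate error is bounded by $\Delta_n+R_n$ via {\bf (A2)}--{\bf (A3)} once $F_n(t-\tau_n)$ is shown to be uniformly large, and the sum is controlled because $|J(t)|$ is tight by the calculation in Lemma~\ref{lem:Kallenberg}. The paper takes the reference time to be simply $t/2$ (your observation that some $t'<t$ is required is exactly the point; here $\tau_n\le(2a+1)\sigma_t\le t/2$ and $F_n\to1$ give $u_n\ge t/2$), and it makes the uniform-in-$n$ control of $\sum\Delta_n$ and $\sum R_n$ explicit by conditioning on $(F_n)$ and truncating on the event $\{|\bar I(t)|\le K\}$.
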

\  \\[-15mm]
{\cec \begin{note}
By density of the set of Lipschitz-continuous compactly-supported functions in the set of continuous compactly supported functions for the topology of the $L^\infty$ norm, Lemma\ref{lem_9} implies that, for all continuous compactly supported functions $f:(-\infty, \infty) \times (-\infty, \infty] \times [0, \infty] \rightarrow \R$,
\begin{equation*}
	\bigg | \int f \di \Psi^*_t - \int f \di \Psi_t \bigg | \rightarrow 0 \text{ in probability, as } t \uparrow \infty.
\end{equation*}
\end{note}}

\begin{proof}
	Let $f$ be a Lipschitz continuous function supported on $K = [-a, a] \times [-b, \infty] \times [0, \infty]$ for $1 \le a, b < \infty$. We have, {\cec for all $t\geq 0$,}
	\begin{align}
	\bigg | \int f \di & \Psi^*_t  -  \int f \di \Psi_t \bigg |  \nonumber \\
		& \le  \sum_{n=1}^{M(t)} \bigg | f \Big( \frac{\tau_n -  \sigma_t  }{\sqrt{\sigma_t }}, 
		\sfrac{F_n - g\big(\log(n \sqrt{\sigma_t })\big)}{g'\big(\log(n \sqrt{\sigma_t })\big)}, \e^{-\gamma F_n(t-\tau_n)} Z_n(t) \Big)  
- f \Big( \frac{\frac{1}{\lambda} \log n -  \sigma_t }{\sqrt{\sigma_t }}, 
	\sfrac{F_n - g\big(\log(n \sqrt{\sigma_t })\big)}{g'\big(\log(n \sqrt{\sigma_t })\big)}, \xi_n \Big) \bigg |  \nonumber \\
		& \le  c_L \sum_{n\in \hat{I}(t)} \bigg( \Big | \frac{\tau_n - \tau_n^*}{\sqrt{\sigma_t }} \Big | + \Big | \e^{-\gamma F_n(t-\tau_n)} Z_n(t) - \xi_n \Big | \bigg), 
	\label{equ:5_4}
	\end{align}
where $c_L$ is the Lipschitz constant of the function $f$, $\xi_n$ are i.i.d.\:copies of $\xi$ (defined in Assumption~{\bf (A3)}), $\tau_n^* = \frac{1}{\lambda} \log n$, and $\hat{I}(t)$ is the random set of indices $n\in \N$ such that
\begin{enumerate}[(a)]
	\item $\big | \frac{\tau_n -  \sigma_t }{\sqrt{\sigma_t }}\big| \le a$ and $ \frac{F_n - g (\log(n \sqrt{\sigma_t } ) )}{g'(\log(n \sqrt{\sigma_t }))} \ge -b$ or
	\item $\big | \frac{\tau^*_n -  \sigma_t }{\sqrt{\sigma_t }}\big| \le a$ and $ \frac{F_n - g(\log(n \sqrt{\sigma_t }))}{g'(\log(n \sqrt{\sigma_t }))} \ge -b$.
\end{enumerate}
{\cec The last inequality of Equation~\eqref{equ:5_4} comes from the fact that, by definition of $\hat I(t)$, all summands associated to integers $n\notin \hat{I}(t)$ are zero because the support of $f$ is included in $[-a,a]\times [-b,\infty]\times [0,\infty]$.}
{\cec By Lemma~\ref{lem:sigma}, there exists $t_0$ such that, for all $t\geq t_0$,} $ \sigma_t  \le \frac{t}{3}$ and $\sqrt{\sigma_t }\le \sigma_t$. For $\varepsilon \in (0, \nicefrac12)$ we denote by $\Upsilon_\varepsilon(t)$ the event that
	\begin{equation*}
 		|\tau_n - \tau_n^* | \le \varepsilon \sqrt{\sigma_t } \quad \text{for all $n \in \N$.}
	\end{equation*}

\noindent 
Assumption {\bf (A1)} {together with Lemma~\ref{lem:sigma} implies that}
$\pr(\Upsilon_\varepsilon(t)) \rightarrow 1$, as $t \rightarrow \infty$ for all $\varepsilon {\cec \in(0,\nicefrac12)}$. Set 
\begin{equation*}
	\bar{I}(t) := \Big\{ n \in \N : \sfrac{|\tau^*_n -  \sigma_t |}{\sqrt{\sigma_t }} \le 2a,  \text{ and } \sfrac{F_n - g(\log(n \sqrt{\sigma_t }))}{g'(\log(n \sqrt{\sigma_t }))}  \ge - b \Big\}.
\end{equation*}
We have that $\hat{I}(t) \subset \bar{I}(t)$ on $\Upsilon_\varepsilon(t)$. 
Indeed, if (a) and $\Upsilon_\varepsilon(t)$ hold then 
\begin{equation*}
	\frac{|\tau^*_n -  \sigma_t |}{\sqrt{\sigma_t }} \le \frac{|\tau^*_n - \tau_n|}{\sqrt{\sigma_t }} + \frac{|\tau_n -  \sigma_t |}{\sqrt{\sigma_t }} \le \varepsilon + a \le 2a,
\end{equation*}
and similarly if (b) hold. We now consider the sum on the right hand side of Equation~\eqref{equ:5_4}, but taken over all $n \in \bar{I}(t)$. First note that, for $n \in \bar{I}(t)$ on $\Upsilon_\varepsilon (t)$, we have
\begin{eqnarray}
	\tau_n \le 2a\sqrt{\sigma_t } +  \sigma_t  \le 2a \sigma_t +  \sigma_t  = \sigma_t  (2 a+1)  
\le \sfrac{t}{2},
	\label{eq:inequality_1}
\end{eqnarray}
for all {\cec $t\geq t_0$}. Since $(\log g)'(\log(n \sqrt{\sigma_t })) \rightarrow 0$ as $t \rightarrow \infty$, and $g(\log(n \sqrt{\sigma_t })) \rightarrow 1$, we have
\begin{eqnarray}
	\label{eq:inequality_2}
	F_n & \ge & g(\log (n \sqrt{\sigma_t })) - b g' (\log (n \sqrt{\sigma_t })) \\
	& =  & g(\log (n \sqrt{\sigma_t }))\Big(1 - b ( \log g )'(\log(n \sqrt{\sigma_t }) \Big) 
	 \rightarrow  1, \nonumber 
\end{eqnarray}
as $t \rightarrow \infty$. Recall $\Delta_n(t)$ from Assumption {\bf (A2)},
$\xi_n=\lim\limits_{t\to\infty} \e^{-\gamma t} Y_n(t)$, and define
$$R_n(t) :=\sup_{w>t} \big| \e^{-\gamma w} Y_n(w) - \xi_n \big|.$$
By Assumption {\bf (A3)} we have $R_n(t)\to 0$ in probability and, for all $t$ large enough, 
we have
\begin{eqnarray*}
	\Big| \e^{-\gamma F_n(t - \tau_n)} Z_n(t) - \xi_n \Big| &  \le & \Big| \e^{-\gamma F_n(t-\tau_n)} Z_n(t)  -  \e^{-\gamma F_n (t - \tau_n)} Y_n(F_n(t - \tau_n))\Big| \\
	& & \hspace{1cm} + \Big| \e^{-\gamma F_n (t - \tau_n)} Y_n(F_n(t - \tau_n)) - \xi_n \Big|  \\
	&\le& \Delta_n(F_n(t-\tau_n)) + R_n (F_n(t-\tau_n) )  \\
	& \le & \Delta_n\big(\sfrac t2\big) + R_n \big(\sfrac t2\big),
\end{eqnarray*}
where we have used Equations \eqref{eq:inequality_1} and \eqref{eq:inequality_2}. Hence we get that, for sufficiently large $t$, on $\Upsilon_\varepsilon (t)$, 
\begin{eqnarray*}
	\bigg| \int f \di \Psi_t - \int f \di \Psi^*_t \bigg| & \le & c_L \sum_{n \in \bar{I}(t)} \bigg( \frac{|\tau_n - \tau_n^*|}{\sqrt{\sigma_t }} + \Big | \e^{-\gamma F_n(t-\tau_n)} Z_n(t) - \xi_n \Big | \bigg) \\
	& \le & c_L \sum_{n \in \bar{I}(t)} \Big( \frac{\sup_n | \tau_n - \tau_n^*|}{\sqrt{\sigma_t }} + \Delta_n\Big(\frac t2\Big) +  R_n \Big( \frac t2 \Big) \Big) \\
	& \le & c_L \frac{|\bar{I}(t)| \sup_n |\tau_n - \tau_n^*|}{\sqrt{\sigma_t }} +  c_L \sum_{n\in \bar{I}(t)} \Delta_n\Big(\frac t2\Big) + c_L \sum_{n\in \bar{I}(t)} R_n \Big( \frac t2\Big).
\end{eqnarray*}
By assumption, the random processes $(R_n)_{n\ge 1}$ are independent of $(F_n)_{n\ge 1}$ and thus also of the random set $\bar{I}(t)$. Recall that, by Lemma \ref{lem:Kallenberg}, $|\bar{I}(t)|$ converges in distribution to a Poisson distribution and 
hence
\begin{equation*}
	\lim_{t \rightarrow \infty} \sum_{n \in \bar{I}(t)} R_n \big( \sfrac{t}{2}\big) = 0, \quad \text{in probability.}
\end{equation*}
\noindent To prove that $\sum_{n\in \bar{I}(t)} \Delta_n\big(\frac t2\big) \rightarrow 0$ in probability as $t \rightarrow \infty$ we use Assumption (A.2). We have
\begin{align*}
\pr \bigg( \sum_{n\in \bar{I}(t)} \Delta_n\Big(\frac t2\Big) \ge \varepsilon \bigg) 
& = \E \bigg[  \pr \bigg( \sum_{n\in \bar{I}(t)} \Delta_n\Big(\frac t2\Big) \ge \varepsilon \Big| \, (F_n) \bigg) \bigg] \\
	& \le  \sum_{k = 0}^\infty \E \Big[ \pr \Big( \exists n \in \bar{I}(t) : \Delta_n\Big(\frac t2\Big) \ge \frac{\varepsilon}{k} \Big | (F_n) \Big) \id_{\{|\bar{I}(t)| = k\}} \Big] \\
	& \le  \sum_{k = 0}^\infty \E \bigg[ \sum_{n\in \bar{I}(t)} \pr \bigg( \Delta_n\Big(\frac t2\Big) \ge \frac{\varepsilon}{k} \Big | (F_n) \bigg) \id_{\{|\bar{I}(t)| = k\}} \bigg] \\
& \le  \sum_{k = 0}^\infty \E \bigg[ k \, \max_{n\in \bar{I}(t)} \pr \bigg( \Delta_n\Big(\frac t2\Big) \ge \frac{\varepsilon}{k} \Big | (F_n) \bigg) \id_{\{|\bar{I}(t)| = k\}} \bigg] .
\end{align*}
Now, given $\delta>0$ pick $K\in\N$ such that, for sufficiently large $t$,
\[\sum_{k=K+1}^\infty \E \bigg[ k \, \max_{n\in \bar{I}(t)} \pr \bigg( \Delta_n\Big(\frac t2\Big) \ge \frac{\varepsilon}{k} \Big | (F_n) \bigg) \id_{\{|\bar{I}(t)| = k\}} \bigg] \leq \E \Big[ |\bar{I}(t)| \id_{|\bar{I}(t)|>K}\Big].\]
{\cec In the proof of Lemma~\ref{lem:Kallenberg}, we have proved that for all pre-compact relatively open box $B$, $\mathbb E[\Psi^*(B)] \to \zeta^*(B)$. The exact same proof applies to any compact box (because $\mathrm{PPP}(\zeta^*)(\partial B)=0$ {\peter as $\zeta^*$ is diffuse}), and applying this convergence to $B = [-2a, 2a]\times [-b, \infty]\times [0, \infty]$ gives that $\mathbb E|\bar I(t)| \to \zeta^*(B)$.} {\peter Moreover, by Lemma~\ref{lem:Kallenberg}, we get that $\bar I(t)$ converges to a Poisson distribution of parameter $\zeta^*(B)$.}
Thus, by dominated convergence, for all $\delta>0$, there exists $K$ sufficiently large such that
\[\sum_{k=K+1}^\infty \E \bigg[ k \, \max_{n\in \bar{I}(t)} \pr \bigg( \Delta_n\Big(\frac t2\Big) \ge \frac{\varepsilon}{k} \Big | (F_n) \bigg) \id_{\{|\bar{I}(t)| = k\}} \bigg] \leq \frac\delta2.\]
{Since, by definition, $\bar I(t)\subseteq \bor{I}(t)$ for $\kappa = 2a$, we get}
\begin{align*}
\sum_{k = 0}^K\E \bigg[ k \, \max_{n\in \bar{I}(t)} \pr & \bigg( \Delta_n\Big(\frac t2\Big) \ge \frac{\varepsilon}{k} \Big | (F_n) \bigg) \id_{\{|\bar{I}(t)| = k\}} \bigg] 
\leq K(K+1) \E \bigg[ \max_{n\in \bor{I}(t)} \pr  \bigg( \Delta_n\Big(\frac t2\Big) \ge \frac{\varepsilon}{K} \Big | (F_n) \bigg) \bigg], 
\end{align*}
which converges to zero by (A.2) and dominated convergence. This shows that
$\sum_{n\in \bar{I}(t)} \Delta_n\big(\frac t2\big) \rightarrow 0$ in probability.
\noindent Summarising, we get that, {\cec in probability when $t\uparrow\infty$,}
\begin{equation*}
	\bigg| \int f \di \Psi_t - \int f \di \Psi^*_t \bigg| \le c_L \big| \bar{I}(t) \big| \frac{\sup_n |\tau_n - \tau_n^*|}{\sqrt{\sigma_t }} + o(1),
\end{equation*}
which converges to zero in probability, as $t \uparrow \infty$.
\end{proof}

\begin{proof}[Proof of Proposition \ref{prop:cv_gamma2}]
	Let $f \colon (-\infty, \infty) \times (-\infty, \infty] \times [0, \infty] \rightarrow \R$ be Lipschitz continuous and compactly supported. Combining Lemmas \ref{lem:Kallenberg} and \ref{lem_9}, together with Slutsky's theorem (see for example \cite[ch.7.2]{Probability}) we get 
		$\int f \di \Psi_t \Rightarrow \int f \di \text{PPP}(\zeta^*)$ as $t \rightarrow \infty$,
where PPP($\zeta^*$) denotes the Poisson point process with intensity $\zeta^*$.
\end{proof}

\subsection{Proof of the local convergence result} \label{sec:proof_local}

\begin{prop}[]
{\cec Asymptotically when $t\to\infty$,} the point process
	\begin{equation*}
			\Gamma_t = \sum^{M(t)}_{n=1} \delta\Big( \frac{\tau_n - \sigma_t } {\sqrt{\sigma_t }}, \sfrac{F_n - g\big(\log (n \sqrt{\sigma_t } )\big)}{g'\big(\log(n \sqrt{\sigma_t })\big)}, \e^{-\gamma g(\lambda \sigma_t)(t-\sigma_t ) - a_1 g(\lambda \sigma_t) \log \sigma_t + \gamma T} Z_n(t) \Big), 
	\end{equation*}
converges vaguely in distribution on $(-\infty, \infty) \times (-\infty, \infty) \times [0, \infty]$ to the Poisson point process with intensity
	\begin{equation*}
		\zeta(\di s, \: \df, \: \di z) = \lambda \e^{-f} \e^{s^2 a_2 - f a_3} \nu(z\e^{s^2 a_2 -f a_3}) 
\, \di s \: \df \: \di z.
	\end{equation*}
	\label{prop:cv_gamma}
\end{prop}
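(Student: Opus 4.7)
The plan is to deduce this from Proposition~\ref{prop:cv_gamma2} via an asymptotic deterministic transformation of the third coordinate that depends only on the first two. Writing the points of $\Gamma_t$ as $(s_n, f_n, z_n)$ and those of $\Psi_t$ as $(s_n, f_n, \tilde z_n)$ (with the same first two coordinates), we have by definition
\begin{equation*}
\log(\tilde z_n/z_n) = -\gamma F_n(t-\tau_n) + \gamma g(\lambda\sigma_t)(t-\sigma_t) + a_1 g(\lambda\sigma_t)\log\sigma_t - \gamma T.
\end{equation*}
So the main task is to show that, uniformly for $(s_n,f_n)$ in a compact set, this quantity converges to $a_2 s_n^2 - a_3 f_n$.

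The first step is a second-order Taylor expansion. Using Assumption~\textbf{(A1)} write $\log(n\sqrt{\sigma_t}) = \lambda\sigma_t + u_n$ with $u_n = \lambda s_n\sqrt{\sigma_t} + \tfrac12\log\sigma_t - \lambda T - \lambda\varepsilon_n$, and expand $g(\log(n\sqrt{\sigma_t}))$ around $\lambda\sigma_t$. The linear term is controlled by the exact identity $g'(\lambda\sigma_t)(t-\sigma_t) = g(\lambda\sigma_t)/\lambda$, which is just the defining Equation~\eqref{equ:sigma} for $\sigma_t$, and the quadratic term by Lemma~\ref{lem:small_o}, which gives $\sigma_t t g''(\lambda\sigma_t)\to -\varkappa/\lambda^2$. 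Expanding $F_n(t-\tau_n) = [g(\log(n\sqrt{\sigma_t})) + f_n g'(\log(n\sqrt{\sigma_t}))](t-\sigma_t - s_n\sqrt{\sigma_t})$, the leading $g(\lambda\sigma_t)s_n\sqrt{\sigma_t}$ contribution from the linear term in $u_n$ cancels against $g(\lambda\sigma_t)s_n\sqrt{\sigma_t}$ coming from $-g(\log(n\sqrt{\sigma_t}))s_n\sqrt{\sigma_t}$. After this crucial cancellation and using $g(\lambda\sigma_t)\to 1$, one arrives at
\begin{equation*}
\gamma F_n(t-\tau_n) = \gamma g(\lambda\sigma_t)(t-\sigma_t) + a_1 g(\lambda\sigma_t)\log\sigma_t - \gamma T - a_2 s_n^2 + a_3 f_n + o(1),
\end{equation*}
with $a_1=\gamma/(2\lambda)$, $a_2=\gamma\varkappa/2$, $a_3=\gamma/\lambda$, whence $z_n = \tilde z_n\, e^{-a_2 s_n^2 + a_3 f_n}(1+o(1))$.

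Second step: define the homeomorphism $\Phi(s,f,\tilde z) := (s,f,\tilde z\, e^{-a_2 s^2 + a_3 f})$. For any Lipschitz, compactly supported test function $h$ on the ambient space, the pullback $h\circ\Phi$ is again Lipschitz and compactly supported, so Proposition~\ref{prop:cv_gamma2} applies and gives $\int h\circ\Phi\,d\Psi_t \Rightarrow \int h\circ\Phi\,d\mathrm{PPP}(\zeta^*)$. Arguing exactly as in Lemma~\ref{lem_9}, restricting to the high-probability event $\Upsilon_\varepsilon(t) = \{|\tau_n-\tau_n^*|\le\varepsilon\sqrt{\sigma_t}\text{ for all }n\}$ and using that the random set of indices contributing to the support of $h$ has asymptotically Poisson cardinality (by Lemma~\ref{lem:Kallenberg}), one shows that $|\int h\,d\Gamma_t - \int h\circ\Phi\,d\Psi_t|\to 0$ in probability: the Lipschitz bound applied to the uniform-in-$n$ estimate $z_n = \tilde z_n e^{-a_2 s_n^2+a_3 f_n}(1+o(1))$ gives a bound proportional to $|z_n|\cdot o(1)$ summed over a tight number of indices, with $|z_n|$ bounded on $\mathrm{supp}(h)$.

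Third step: compute the pushforward intensity. Substituting $\tilde z = z e^{a_2 s^2 - a_3 f}$ in $\zeta^*(ds,df,d\tilde z) = \lambda e^{-f}\nu(\tilde z)\,ds\,df\,d\tilde z$ with Jacobian $d\tilde z = e^{a_2 s^2 - a_3 f}\,dz$ yields
\begin{equation*}
\lambda e^{-f}\,e^{s^2 a_2 - f a_3}\,\nu(z e^{s^2 a_2 - f a_3})\,ds\,df\,dz = \zeta(ds,df,dz),
\end{equation*}
as required. The main technical obstacle is the uniform control of the $o(1)$ Taylor remainder over the random, $t$-dependent window of relevant indices; this is handled by the conditioning on $\Upsilon_\varepsilon(t)$ together with the domination provided by the limiting Poisson intensity, following the template of the proof of Lemma~\ref{lem_9}.
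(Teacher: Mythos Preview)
Your approach is essentially identical to the paper's: introduce the map $\phi(s,f,z)=(s,f,e^{-a_2 s^2+a_3 f}z)$, show via a second-order Taylor expansion (using the defining relation $g'(\lambda\sigma_t)(t-\sigma_t)=g(\lambda\sigma_t)/\lambda$ and Lemma~\ref{lem:small_o}) that the third coordinates of $\Gamma_t$ and $\Psi_t\circ\phi^{-1}$ differ by a uniform multiplicative $1+o(1)$, and then compute the pushforward of $\zeta^*$ under $\phi$.

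One small gap: your claim that ``$|z_n|$ is bounded on $\mathrm{supp}(h)$'' is not justified, since the test functions here are supported on $[-a,a]\times[-b,b]\times[0,\infty]$ and the third coordinate is \emph{not} bounded. The Lipschitz bound gives a term of order $o(1)\cdot\sum_{n\in\tilde I(t)} \tilde z_n\,e^{-a_2 s_n^2+a_3 f_n}$, and one needs to argue that this sum is tight. The paper does this directly: the sum equals $\int z\,\id_{|s|\le a}\id_{|f|\le b}\,\di(\Psi_t\circ\phi^{-1})$, which by Proposition~\ref{prop:cv_gamma2} converges in distribution to the corresponding integral against $\mathrm{PPP}(\zeta)$, and the latter has finite expectation by Assumption~\textbf{(A3)}. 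Your appeal to ``domination by the limiting Poisson intensity'' gestures at this, but the tight-cardinality-times-bounded-$z_n$ argument you actually wrote does not suffice. Also, the paper does not invoke $\Upsilon_\varepsilon(t)$ here at all; the set $\tilde I(t)$ is defined directly via $s_n,f_n$ (which involve $\tau_n$, not $\tau_n^*$), and Assumption~\textbf{(A1)} is used only to control $T_n=T+\varepsilon_n$ inside the Taylor expansion.
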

\begin{proof}[Proof of Proposition \ref{prop:cv_gamma}]
Consider the continuous function
	\begin{equation*}
		\phi \colon (s,f,z) \rightarrow (s,f,\e^{-s^2 a_2 + f a_3} z),
	\end{equation*}
so that $\zeta \circ \phi^{-1}  = \zeta^*$. We argue that $\Psi_t\circ \phi^{-1}$ is asymptotically equivalent to $\Gamma_t$, i.e.\ for all Lipschitz continuous, compactly supported functions $f \colon (-\infty, \infty) \times (-\infty, \infty) \times [0, \infty] \rightarrow \R $,
	\begin{equation*}
		\bigg| \int f \di \Psi_t \circ \phi^{-1}- \int f \di \Gamma_t \bigg| \rightarrow 0 \quad \text{in probability, as $t \uparrow \infty.$}
	\end{equation*}
To prove this let $f$ be a Lipschitz continuous function with Lipschitz constant $c_L$, supported on $K = [-a, a] \times [-b, b] \times [0, \infty]$ for $1 \le a, b < \infty$ and abbreviate
\begin{equation*}
	s_n = \frac{\tau_n -  \sigma_t }{\sqrt{\sigma_t }}  \quad \text{ and } \quad 
	f_n =  \frac{F_n - g(\log(n \sqrt{\sigma_t }))}{g'(\log(n \sqrt{\sigma_t }))}, \quad \mbox{ for } n \ge 1.
\end{equation*}
{\cec For all $t\geq 0$,} we have
	\begin{align}
		\bigg| \int f \di \Psi_t\circ \phi^{-1}& -  \int f \di \Gamma_t \bigg|  \nonumber \\
		& \le \sum_{n = 1}^{M(t)} \bigg| f \Big( \frac{\tau_n - \sigma_t}{\sqrt{\sigma_t}}, \sfrac{F_n - g \big(\log(n \sqrt{\sigma_t}) \big)}{ g \big(\log(n \sqrt{\sigma_t}) \big)}, \e^{-a_2 s_n^2 + a_3 f_n} \e^{-\gamma F_n(t - \tau_n)} Z_n(t) \Big) \nonumber \\
		& \hspace{0.8 cm} - f \Big(\frac{\tau_n - \sigma_t}{\sqrt{\sigma_t}}, \sfrac{F_n - g \big(\log(n \sqrt{\sigma_t}) \big)}{ g \big(\log(n \sqrt{\sigma_t}) \big)}, \e^{-\gamma g(\lambda \sigma_t)(t-\sigma_t ) - a_1 g(\lambda \sigma_t) \log \sigma_t + \gamma T} Z_n(t) \Big)\bigg| \nonumber \\
		&\le c_L \sum_{n\in \tilde{I}(t)}  \Big| \e^{-\gamma F_n(t - \tau_n) -a_2 s_n^2 + a_3 f_n } Z_n(t) - \e^{-\gamma g(\lambda \sigma_t)(t-\sigma_t ) - a_1 g(\lambda \sigma_t) \log \sigma_t + \gamma T} Z_n(t) \Big|,
		\label{equ:pgs}
	\end{align}
where $\tilde{I}(t)$ is the random set of indices $n \in \N$ such that $|s_n|\leq a$ and $|f_n|\leq b$
{\cec (this definition implies that all summands associated to integers $n\notin\tilde I(t)$ are zero because the support of $f$ is contained in~$K$)}.
We now show that the exponents of \eqref{equ:pgs} are asymptotically equivalent, namely
	\begin{equation}
		-\gamma F_n(t - \tau_n) -a_2 s_n^2 + a_3 f_n = -\gamma g(\lambda \sigma_t)(t-\sigma_t ) - a_1 g(\lambda \sigma_t) \log \sigma_t + \gamma T + o(1),
		\label{equ:exponents}
	\end{equation}
{\cec almost surely when $t\uparrow\infty$}, where the $o(1)$-term {\cec is uniform in} $n$. Indeed, by definition of $s_n$ and using Assumption {\bf (A1)}, we get
\begin{equation*}
	\log n = \lambda ( \sigma_t  + s_n \sqrt{\sigma_t } - T_n ), \quad \mbox{ for } n \ge 1,
\end{equation*} 
where we set $T_n = T+\varepsilon_n$. Therefore, we have
\begin{eqnarray*}
	F_n & = & g \big(\log (n \sqrt{\sigma_t }) \big) + f_n g'  \big(\log (n \sqrt{\sigma_t })  \big)\\
	& = & g  \Big(\lambda ( \sigma_t  + s_n \sqrt{\sigma_t } - T_n) + \sfrac{1}{2} \log \sigma_t \Big) + f_n g'  \Big(\lambda ( \sigma_t  + s_n \sqrt{\sigma_t } - T_n) + \sfrac{1}{2} \log \sigma_t  \Big).
\end{eqnarray*}
Let $x_n := \lambda s_n \sqrt{\sigma_t }+ \sfrac{1}{2} \log \sigma_t - \lambda T_n $, so that
\begin{equation*}
	F_n(t- \tau_n) = \Big (g  \big(\lambda \sigma_t  + x_n \big) + f_n g'  \big(\lambda \sigma_t  +x_n \big) \Big ) \big(t - \sigma_t  - s_n \sqrt{\sigma_t } \big).
\end{equation*}
By the mean value theorem, there exist $c_1, c_2 \in [\lambda \sigma_t, \lambda \sigma_t  + x_n]$, such that 
\begin{eqnarray}
	g  \big(\lambda \sigma_t + x_n \big) & = & g(\lambda \sigma_t) + x_n g'(\lambda \sigma_t) + \frac{1}{2} x_n^2 g'' (c_1), \quad \text{ and }  \label{equ:MVT1}\\
	g'(\lambda \sigma_t + x_n) & = & g'(\lambda \sigma_t ) + x_n g''(c_2).
	\label{equ:MVT2}
\end{eqnarray}
Hence, for $n \in \tilde{I}(t)$ we can rewrite
\begin{eqnarray*}
	F_n(t- \tau_n) & = & \Big( g(\lambda \sigma_t ) + x_n g'(\lambda \sigma_t) + \frac{1}{2} x_n^2 g'' (c_1) + f_n g'(\lambda \sigma_t) + x_n f_n g''(c_2) \Big)\big( t - \sigma_t  -s_n \sqrt{\sigma_t } \big) \\	
	& = & g(\lambda \sigma_t)(t-\sigma_t ) - g(\lambda \sigma_t) s_n \sqrt{\sigma_t } + \lambda s_n\sqrt{\sigma_t } g' (\lambda \sigma_t)(t - \sigma_t ) - \lambda s^2_n \sigma_t  g'(\lambda \sigma_t) \\
	&  &  + \Big(\sfrac{1}{2} \log \sigma_t  - \lambda T_n\Big) g'(\lambda \sigma_t)\big(t - \sigma_t  - s_n\sqrt{\sigma_t }\big) + \sfrac{1}{2}x_n^2 g'' (c_1) (t - \sigma_t  - s_n \sqrt{\sigma_t })\\
	&  & +  f_n g'(\lambda \sigma_t) (t - \sigma_t ) - f_n g'(\lambda \sigma_t ) s_n \sqrt{\sigma_t } + f_n x_n g''(c_2)(t -\sigma_t  - s_n\sqrt{\sigma_t }). 
\end{eqnarray*}
Recall that by definition $g'(\lambda \sigma_t) (t - \sigma_t ) = \frac{g(\lambda \sigma_t)}{\lambda}$, and $g(\lambda \sigma_t) = 1 + o(1)$ when $t \rightarrow \infty$. We get 
\begin{equation*}
	f_n g'(\lambda \sigma_t)\big( t - \sigma_t \big)= \frac{f_n}{\lambda} + o(1)
	\quad{\cec{\text{almost surely when }t\uparrow\infty.}}
\end{equation*}
By definition $g(\lambda \sigma_t) \uparrow 1$ as $t \uparrow \infty$ and by Lemma \ref{lem:sigma}, we have $\sigma_t = o(t)$ and $g'(\lambda \sigma_t) \sim \frac{1}{\lambda t}$ (see Equations \eqref{equ:sigma} and \eqref{equ:c1}). {Furthermore, for $n \in \tilde{I}(t)$, Assumption {\bf (A1)} implies $T_n = T + \varepsilon_n \rightarrow T$, as $t \rightarrow \infty$.} Combining these with the fact that for all $n \in \tilde{I}(t)$, $|s_n| \le a$ and $|f_n| \le b$, we can show that for all $n \in \tilde{I}(t)$, {\cec almost surely} as $t \rightarrow \infty$, the following terms go to zero:
\begin{eqnarray*}
	&&\big|\lambda s_n^2 \sigma_t g'(\lambda \sigma_t)\big| \le \frac{a^2  \sigma_t}{t - \sigma_t} = \mathcal{O}\Big(\frac{\sigma_t}{t} \Big) = o(1), \\
	&& \Big|\Big(\sfrac{1}{2} \log \sigma_t  - \lambda T_n\Big) g'(\lambda \sigma_t)s_n\sqrt{\sigma_t }\Big| \le \Big|\sfrac{1}{2} \log \sigma_t  - \lambda T_n\Big| \frac{ a\sqrt{\sigma_t }}{\lambda(t - \sigma_t)} \sim \Big(\sfrac{1}{2} \log \sigma_t  - \lambda T\Big) \frac{a\sqrt{\sigma_t }}{\lambda(t - \sigma_t)} = o(1), \\
	&& \Big| f_n g'(\lambda \sigma_t ) s_n \sqrt{\sigma_t } \Big| \le  \frac{a b \:  \sqrt{\sigma_t}}{\lambda (t - \sigma_t)} =  \mathcal{O} \Big( \frac{ \sqrt{  \sigma_t}}{ t } \Big) = o(1).
\end{eqnarray*}
Therefore, {\cec almost surely as $t\uparrow\infty$},
\begin{eqnarray}
	F_n(t- \tau_n) & = & 	g(\lambda \sigma_t)(t - \sigma_t) + \frac{g(\lambda \sigma_t)}{2 \lambda} \log \sigma_t - g(\lambda \sigma_t) T_n + \frac{f_n}{\lambda} + \frac{1}{2} x_n^2 g''(c_1)(t - \sigma_t -s_n \sqrt{\sigma_t})  \nonumber \\
	& & +  f_n x_n g''(c_2) (t - \sigma_t - s_n \sqrt{\sigma_t}) + o(1). 	
	\label{equ:sum_for_F}
\end{eqnarray}
We can write $g(\lambda \sigma_t) = 1 + o(1)$, as $t \uparrow \infty$, and {\peter by Assumption {\bf (A1)}, $T_n = T+o(1)$  uniformly in $n\in \tilde{I}(t)$ where the $o(1)$-term converges to zero almost surely as $t\to\infty$.} Therefore we get 
\begin{equation}
	g(\lambda \sigma_n)\: T_n = T + o(1) \quad \text{ as $t \rightarrow \infty$.} 
	\label{equ:F_T_term}
\end{equation}
To simplify the last two terms in Equation \eqref{equ:sum_for_F}, we recall that Lemma \ref{lem:small_o} implies \smash{$g''(c_i) \sim \frac{-\varkappa}{ \lambda^2 \sigma_t t}$} for $i = 1,2$. Combing this with the fact that $\sigma_t \rightarrow \infty$ as $t \rightarrow \infty$ (by Lemma \ref{lem:sigma}), we get for 
$n \in \tilde{I}(t)$,
\begin{eqnarray}
	\big| f_n x_n g''(c_2) (t - \sigma_t - s_n \sqrt{\sigma_t}) \big| & = & \big| f_n \big( \lambda s_n \sqrt{\sigma_t }+ \sfrac{1}{2} \log \sigma_t - \lambda T_n  \big) g''(c_2) (t - \sigma_t - s_n \sqrt{\sigma_t}) \big| \nonumber \\
	& \le & \big| b \big( \lambda a \sqrt{\sigma_t }+ \sfrac{1}{2} \log \sigma_t - \lambda T_n  \big) g''(c_2) (t - \sigma_t + a \sqrt{\sigma_t}) \big| \nonumber \\
	& = & \Big| b \big( \lambda a \sqrt{\sigma_t }+ \sfrac{1}{2} \log \sigma_t - \lambda T   + o(1)\big) \frac{\varkappa(t - \sigma_t + a \sqrt{\sigma_t})}{\lambda^2 \sigma_t t} + o(1) \Big|  \nonumber \\
	& = & \mathcal{O} \Big(\frac{1}{\sqrt{\sigma_t}} \Big) = o(1), \quad \text{{\cec almost surely as } $t \rightarrow \infty$.}
	\label{equ:F_pen_term}
\end{eqnarray}
Consider the penultimate term of Equation \eqref{equ:sum_for_F}. By definition of $x_n$ we can rewrite it as follows,
\begin{align}
	\sfrac{1}{2} x_n^2 g''(c_1)(t - & \sigma_t - s_n \sqrt{\sigma_t})  = \sfrac{1}{2} \Big( \lambda s_n \sqrt{\sigma_t }+ \sfrac{1}{2} \log \sigma_t - \lambda T_n  \Big)^2  g''(c_1)(t - \sigma_t - s_n \sqrt{\sigma_t}) \notag\\
	& =  \sfrac{1}{2}\lambda^2 s_n^2 \sigma_t g''(c_1)(t - \sigma_t)  - \sfrac{1}{2}\lambda^2 s_n^3 \sigma^{\nicefrac{3}{2}}_t g''(c_1) \notag\\
	&  \hspace{0.4cm} + \sfrac{1}{2}\Big( 2 \lambda s_n \sqrt{\sigma_t }\Big(\sfrac{1}{2} \log \sigma_t - \lambda T_n\Big) +\Big(\sfrac{1}{2} \log \sigma_t - \lambda T_n  \Big)^2 \Big) g''(c_1)(t - \sigma_t - s_n \sqrt{\sigma_t}). \label{eq:sum_cec}
\end{align}
The first summand is the largest term and, by Lemma \ref{lem:small_o}, we get that, {\cec almost surely} as $t \rightarrow \infty$, 
\begin{equation*}
	 \frac{1}{2}\lambda^2 s_n^2 \sigma_t g''(c_1)(t - \sigma_t) = - \frac{\lambda^2 s_n^2 \sigma_t \varkappa (t - \sigma_t)}{2 \lambda^2 \sigma_t t} + o(1) =  - \frac{1}{2} s_n^2 \varkappa + o(1).
\end{equation*}
Using Lemmas \ref{lem:sigma}, \ref{lem:small_o} and Assumption {\bf (A1)}, 
we show that the second and third summands in Equation~\eqref{eq:sum_cec} go to zero {\cec almost surely} as $t \rightarrow \infty$. Indeed,
\begin{align*}
&	 \Big|  - \sfrac{1}{2}\lambda^2 s_n^3 \sigma^{\nicefrac{3}{2}}_t g''(c_1)+ \frac{1}{2}\Big( 2 \lambda s_n \sqrt{\sigma_t }\Big(\sfrac{1}{2} \log \sigma_t - \lambda T_n\Big) +\Big(\sfrac{1}{2} \log \sigma_t - \lambda T_n  \Big)^2 \Big) g''(c_1)(t - \sigma_t - s_n \sqrt{\sigma_t}) \Big| \\
	& \hspace{0.1cm} \le  \Big | - \sfrac{1}{2}\lambda^2 a^3 \sigma^{\nicefrac{3}{2}}_t g''(c_1)+ \frac{1}{2}\Big( 2 \lambda a \sqrt{\sigma_t }\Big(\sfrac{1}{2} \log \sigma_t - \lambda T_n\Big) +\Big(\sfrac{1}{2} \log \sigma_t - \lambda T_n  \Big)^2 \Big) g''(c_1)(t - \sigma_t + a \sqrt{\sigma_t}) \Big | \\
	& \hspace{0.1cm} = \Big|  \sfrac{1}{2}a^3 \sqrt{\sigma_t} \sfrac{\varkappa}{ t} - \sfrac{1}{2}\Big( 2 \lambda a \sqrt{\sigma_t }\Big(\sfrac{1}{2} \log \sigma_t - \lambda T + o(1) \Big) +\Big(\sfrac{1}{2} \log \sigma_t - \lambda T + o(1)\Big)^2 \Big) \sfrac{\varkappa (t - \sigma_t +a \sqrt{\sigma_t})}{\lambda^2 \sigma_t t}  + o(1) \Big| \\
	& \hspace {0.1 cm} = \mathcal{O}\Big(\frac{t \sqrt{\sigma_t} \log \sigma_t}{\sigma_t t} \Big) = o(1),
\end{align*}
{\cec almost surely when $t\to\infty$.}
Therefore, for all $n \in \tilde{I}(t)$, we have 
\begin{equation}
	\sfrac{1}{2} x_n^2 g''(c_1)(t - \sigma_t - s_n \sqrt{\sigma_t}) = - \sfrac{1}{2} s_n^2 \varkappa + o(1), \quad \text{a.s.\ as $t \rightarrow \infty$.}
	\label{equ:F_final_term}
\end{equation}	
Combining \eqref{equ:F_T_term}, \eqref{equ:F_pen_term} and \eqref{equ:F_final_term}, Equation \eqref{equ:sum_for_F} becomes
\begin{eqnarray*}
	F_n(t- \tau_n) & = & g(\lambda \sigma_t)(t - \sigma_t) + \frac{g(\lambda \sigma_t)}{2 \lambda} \log \sigma_t - T - \frac{1}{2}s_n^2\varkappa + \frac{1}{\lambda}f_n +  o(1),
\end{eqnarray*}
and thus 
\begin{eqnarray*}
	- \gamma F_n(t - \tau_n) 
	& = & -\gamma g(\lambda \sigma_t ) (t - \sigma_t ) - a_1 g(\lambda \sigma_t) \log \sigma_t  + \gamma T  +  a_2 s_n^2  - a_3 f_n  + o(1),
\end{eqnarray*}
{\cec almost surely as $t\to\infty$,}
where $a_1 =  \nicefrac{\gamma}{2\lambda}$, $a_2 = \nicefrac{\gamma\varkappa}{2}$ and $a_3 = \nicefrac{\gamma}{\lambda} $. Rearranging we get Equation~\eqref{equ:exponents}. \\

\noindent Substituting Equation~\eqref{equ:exponents} into \eqref{equ:pgs} we get
	\begin{eqnarray*}
		\bigg| \int f \di \Psi_t \circ \phi^{-1} & - & \int f \di \Gamma_t \bigg| \le c_L \sum_{n \in \tilde{I}(t)} Z_n(t) \e^{-\gamma F_n(t - \tau_n)} \e^{-a_2 s_n^2+a_3 f_n} \big| 1 - \e^{o(1)}\big|.
	\end{eqnarray*}	
Since the almost-sure $o(1)$-term is uniform in $n\in \tilde{I}(t)$, we get
	\begin{eqnarray*}
		\bigg| \int f \di \Psi_t \circ \phi^{-1} & - & \int f \di \Gamma_t \bigg| 
		= o\bigg( \sum_{n \in \tilde{I}(t)} Z_n(t) \e^{-\gamma F_n(t - \tau_n)} \e^{-a_2 s_n^2+a_3 f_n} \bigg),
	\end{eqnarray*}
	{\cec almost surely as $t\to\infty$.}
Furthermore, by definitions of $\Psi_t \circ \phi^{-1}$ and $\tilde{I}(t)$, 
	\begin{eqnarray*}
		\sum_{n \in \tilde{I}(t)} Z_n(t) \e^{-\gamma F_n(t - \tau_n)} \e^{-a_2 s_n^2+a_3 f_n} & = & \int_0^\infty  \id_{|s| \le a} \id_{|f| \le b} \: z \: \di \Psi_t \circ \phi^{-1}(s, \: f \:, z) \\
		& \rightarrow & \int_0^\infty  \id_{|s| \le a} \id_{|f| \le b} \: z \: \di \text{PPP}\big( \zeta^* \circ \phi^{-1}\big)\\
& = & \int_0^\infty  \id_{|s| \le a} \id_{|f| \le b}\: z\: \di \text{PPP}(\zeta),
	\end{eqnarray*}
{\cec in distribution} as $t \rightarrow \infty$, by Proposition \ref{prop:cv_gamma2}, 
{\cec since the function $(s,f,z)\mapsto \id_{|s| \le a} \id_{|f| \le b} \: z$ has compact support in $(-\infty,\infty)\times (-\infty, \infty]\times [0,\infty]$.} 
Recalling the definition of $\zeta$, and substituting $w = z \e^{s^2 a_2 - f a_3}$, we get
	\begin{align*}
		\E \bigg [ \int_0^\infty z \id_{|s| \le a} \id_{|f| \le b} \, \di \text{PPP}(\zeta) \bigg] & =  \int_{-a}^a \int_{-b}^b \int_0^\infty  \lambda \e^{-f} \e^{s^2 a_2 - fa_3} z \nu\big(z \e^{s^2 a_2 - f a_3}\big) \: \di s \: \di f \:\di z \\
		& =  \int_{-a}^a \lambda \e^{-s^2a_2} \: \di s \int_{-b}^b \e^{(a_3 - 1) f} \: \di f \int_0^\infty w \nu(w) \: \di w \\
		& =  \lambda \sqrt{\frac {\pi}{ a_2}} \: \text{erf}\big(a \sqrt{a_2}\big) \frac{1}{a_3 - 1} \Big( \e^{(a_3 - 1) b} - \e^{-(a_3 - 1)b} \Big) \int_0^\infty w \nu(w) \: \di w =:  C_1,
	\end{align*}
where $\text{erf}(x) = \frac{1}{\sqrt{\pi}} \int_{-x}^x \e^{-t^2} \di t$. Note that $C_1 < \infty$ since $\int_0^\infty w \nu(w) \: \di w < \infty$ by Assumption {\bf (A3)}. {\cec This implies that $\sum_{n \in \tilde{I}(t)} Z_n(t) \e^{-\gamma F_n(t - \tau_n)} \e^{-a_2 s_n^2+a_3 f_n}$ converges in distribution to an almost surely finite random variable, and thus}
	\begin{equation*}
		\bigg| \int f \di \Psi_t \circ \phi^{-1} -  \int f \di \Gamma_t \bigg| \to 0, \quad \text{{\cec in distribution, and thus in probability} as $t \rightarrow \infty$,}
	\end{equation*}
which means that the point process $\Gamma_t$ is asymptotically equivalent to $\Psi_t \circ \phi^{-1}$. {\cec Note that, by a change of variable, $\int f \mathrm d\Psi_t\circ \phi^{-1} = \int f\circ \phi\,\mathrm d\Psi_t$. For all functions $f$ continuous and compactly supported, since $\phi$ is continuous, the function $f\circ \phi$ is also continuous and compactly supported, implying that
\[\int f \mathrm d\Psi_t\circ \phi^{-1}
= \int f\circ \phi\,\mathrm d\Psi_t 
\to \int f\circ \phi\,\mathrm d\mathrm{PPP}(\zeta^*)
= \int f\mathrm d\mathrm{PPP}(\phi(\zeta^*)),\]
where we have used Proposition~\ref{prop:cv_gamma2}.
One can check that $\zeta$ is the image of $\zeta^*$ by $\phi$.
This implies that $\Psi_t\circ \phi^{-1}$ converges vaguely in distribution on $(-\infty, \infty) \times (-\infty, \infty] \times [0, \infty] $ to $\text{PPP}(\zeta)$, implying that$\Gamma_t$ does too, since the two point processes are asymptotically equivalent.
}
\end{proof}

\section{Compactification and completion of the proofs} \label{sec:compactification}
To deduce Theorem~\ref{theo:theo_main} from Proposition~\ref{prop:cv_gamma}, one has to control the contribution of the point process near the closed boundaries of $[-\infty, \infty] \times [-\infty, \infty] \times (0, \infty]$. We prove that the families that are born outside of the main window, namely the ones that are unfit or born late, are too small to contribute in the limit. We first consider families which are born either early or late. We then show the negligibility of families lying under the main window by looking at families with small fitness {\cec (see Figure~\ref{fig})}.

\subsection{Contribution of young and old families} \label{sec:young}
\begin{lem}[Contribution of young and old families]
	For every $\eta > 0$ and $\varepsilon > 0$ there exists $v >1$ such that, for all sufficiently large $t$, we have
		\begin{equation*}
			\pr \bigg( \max_{n \in \I_t(v)} \e^{-\gamma g(\lambda \sigma_t)(t - \sigma_t ) - a_1 g(\lambda \sigma_t) \log \sigma_t + \gamma T} Z_n(t) \ge \varepsilon \bigg) \le \eta,
		\end{equation*}
	where $\I_t(v) := [0, n_t(-v)] \cup [n_t(v), \infty]$, $n_t(\pm v):=\exp\big\{\lambda\big(\sigma_t  \pm v \sqrt{\sigma_t } \big)\big\}$. 
		\label{lem:lem6_3}
\end{lem}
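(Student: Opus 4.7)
The plan is to use Assumption~{\bf (A4)} to dominate each $Z_n(t)$ by an exponentially-tailed random variable, then to apply a union bound and integrate against the distribution of the rescaled fitness $f_n$ using its exponential tail (which follows from the convexity of $m$); the resulting sum will reduce to a Gaussian tail integral that vanishes as $v\to\infty$.

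\textbf{Setup.} By Assumption~{\bf (A4)}, setting $\Xi_n := \sup_{u\ge 0}\e^{-\gamma u}X_n(u)$, we have $Z_n(t) \le \Xi_n\,\e^{\gamma F_n(t-\tau_n)}$ with $\pr(\Xi_n \ge x \mid (F_i)_i) \le c_0\,\e^{-\eta x}$. Writing $A_t := \e^{-\gamma g(\lambda\sigma_t)(t-\sigma_t) - a_1 g(\lambda\sigma_t)\log\sigma_t + \gamma T}$ and $U_n := A_t^{-1}\e^{-\gamma F_n(t-\tau_n)}$, a union bound followed by integration of the tail of $\Xi_n$ gives
\begin{equation*}
\pr\Big(\max_{n\in\I_t(v)} A_t Z_n(t) \ge \varepsilon\Big) \le \sum_{n\in\I_t(v)} \E\big[c_0\exp(-\eta\varepsilon U_n)\big].
\end{equation*}

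\textbf{Main regime.} For a large but fixed $V > v$ and $n$ with $|s_n|\le V$, the uniform Taylor expansion in Equation~\eqref{equ:exponents} of the proof of Proposition~\ref{prop:cv_gamma} gives $U_n = \exp(a_2 s_n^2 - a_3 f_n + o(1))$. Convexity of $m$ (Assumption~{\bf (A5.1)}) combined with the identity $m(g_n)=\log(n\sqrt{\sigma_t})$ and $m'(g_n)g_n'=1$ gives the universal bound $\pr(f_n \ge y) \le (n\sqrt{\sigma_t})^{-1}\,\e^{-y}$ for $y\ge 0$. Inserting this and performing the substitution $u=\eta\varepsilon\exp(a_2 s_n^2 - a_3 f_n)$ (or integrating by parts with the boundary contributions vanishing) yields
\begin{equation*}
\E\big[c_0\exp(-\eta\varepsilon U_n)\big] \le \frac{C_1\, \e^{-(a_2/a_3)s_n^2}}{n\sqrt{\sigma_t}},
\end{equation*}
with $C_1=C_1(\varepsilon,\eta)$ independent of $n$ and $t$. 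A Riemann-sum approximation, using $\log n \approx \lambda\sigma_t+\lambda s\sqrt{\sigma_t}$ and hence $\mathrm{d}n/n \approx \lambda\sqrt{\sigma_t}\,\di s$, reduces the corresponding sum to $C_1\lambda\int_{v\le|s|\le V}\e^{-(a_2/a_3)s^2}\di s$, a Gaussian tail integral that can be made $<\eta/2$ by choosing $v$ large, independently of~$V$.

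\textbf{Main obstacle.} The Taylor expansion underlying the second step is only uniform for bounded $|s_n|$, so the regime $|s_n|>V$ with $|s_n|$ possibly diverging with $t$ requires a direct treatment. I plan to extend the expansion by one higher order, using Lemma~\ref{lem:small_o} and Assumption~{\bf (A5.3)} to verify that $\log(A_t^{-1}\e^{-\gamma g_n(t-\tau_n)})\ge (a_2 + o(1))\,s_n^2$ uniformly for $|s_n|=o(\sqrt{t/\sigma_t})$; in this range the same estimate on $\E[c_0\exp(-\eta\varepsilon U_n)]$ applies and the Gaussian-sum argument again gives a negligible contribution. The genuinely delicate part is $|s_n|\gtrsim \sqrt{t/\sigma_t}$, i.e.\ $|\tau_n-\sigma_t|\gtrsim \sqrt{t}$: for extremely old families Assumption~{\bf (A1)} forces $n$ to be so small that their total count $\le \e^{\lambda(\sigma_t-\sqrt{t})}$ is itself a negligible correction (or even empty when $\sigma_t\le \sqrt{t}$), while for extremely young families $t-\tau_n$ becomes small enough that the deterministic bound $F_n\le 1$ already gives a geometric decrease $\e^{\gamma(t-\tau_n)}$ matched against $A_t^{-1}$, showing $U_n\to \infty$ at a rate compensating the polynomial count of such $n$. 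Choosing first $V$ and then $v$ sufficiently large concludes the proof.
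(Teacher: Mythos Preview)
Your overall strategy---union bound via {\bf (A4)}, then exploit the fitness tail to produce a Gaussian factor $\e^{-c\,s_n^2}$ and integrate---is exactly what the paper does. The tail bound $\pr(f_n\ge y)\le (n\sqrt{\sigma_t})^{-1}\e^{-y}$ from convexity of $m$ is correct and elegant. Where your argument becomes unreliable is the case-splitting on the size of $|s_n|$.

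\textbf{The main gap.} Your treatment of the region $|s_n|\gtrsim\sqrt{t/\sigma_t}$ does not work in general. For ``extremely old'' families you bound the contribution by $c_0$ times the count, claimed to be at most $\e^{\lambda(\sigma_t-\sqrt t)}$. But nothing in the assumptions forces $\sigma_t\le\sqrt t$: for instance in Example~(1) with $\varrho>1$ one has $\sigma_t\sim x_0\,t^{\varrho/(\varrho+1)}\gg\sqrt t$, so this count diverges and the bound is useless. What actually saves the day for old families is that $F_n$ close to $1$ is itself exponentially unlikely---exactly the mechanism your count argument discards. A secondary issue: your ``Main regime'' step invokes Equation~\eqref{equ:exponents}, which was established only for $|s_n|\le a$ \emph{and} $|f_n|\le b$ both bounded; you then integrate over all $f_n$. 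For $f_n\to+\infty$ the error terms in that expansion are $O(|f_n|)\cdot o(1)$, not $o(1)$, so the identity $U_n=\exp(a_2 s_n^2-a_3 f_n+o(1))$ needs justification (it can be repaired, but you haven't done so).

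\textbf{How the paper avoids the split.} The paper does not separate bounded and unbounded $|s_n|$. After the union bound it writes $\pr(A_n\ge c_{n,t}\e^{-\gamma y})=\E[\pr(\cdot\mid F_n)]$, bounds this by $\E[c_0\exp(-\eta\varepsilon\,\cdots)]$, and then computes the expectation over $F$ \emph{exactly} via $\pr(F\ge u)=\e^{-m(u)}$. The integrand becomes $P(x)$ and a direct second-order Taylor expansion of $m$ around $g(\lambda\sigma_t)$, using $m''(g(\lambda\sigma_t))\sim \lambda\varkappa t^2/\sigma_t$ from {\bf (A5.3)}, produces the factor $\e^{-\lambda\varkappa w^2/(2g(\lambda\sigma_t))}$ in one step, uniformly over the whole range $|w|\ge v$. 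This is the key difference: by expanding $m$ (rather than relying on the pre-established expansion of $F_n(t-\tau_n)$), the quadratic in $w$ emerges from the second derivative $m''$ and no intermediate cut-off $V$ is needed. If you want to salvage your approach, you would need to carry the second-order term $\tfrac12 x_n^2 g''(c_1)(t-\sigma_t)$ in the expansion of $F_n(t-\tau_n)$ with explicit control of the remainder for \emph{all} $w$, which in effect reproduces the paper's computation.
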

\begin{proof}
	Let $\eta, \varepsilon > 0$. For all $n \ge 1$, we define 
\begin{equation*}
	A_n:= \max_{u\ge\tau_n} Z_n(u) \e^{-\gamma F_n (u - \tau_n)}.
\end{equation*}
If there exists $t \ge \tau_n$ such that
\begin{eqnarray}
	\label{Zn}
 	Z_n(t) \ge \varepsilon \e^{\gamma g(\lambda \sigma_t) (t - \sigma_t ) + a_1 g(\lambda \sigma_t) \log \sigma_t - \gamma T},
\end{eqnarray}
then we get,
\begin{eqnarray}
	\label{An}	
 	A_n \ge Z_n(t) \e^{-\gamma F_n(t - \tau_n)}\ge\varepsilon \e^{ \gamma g(\lambda \sigma_t)(t - \sigma_t ) + a_1 g(\lambda \sigma_t) \log \sigma_t - \gamma T - \gamma F_n(t-\tau_n)}.  
\end{eqnarray}
By Assumption {\bf (A1)}, we have $\tau_n = \frac{1}{\lambda} \log n + T_n$, where $T_n = T + \varepsilon_n$; therefore \eqref{An} is equivalent to 
\begin{eqnarray*}
A_n \ge c_{n,t} \e^{- \gamma (1-F_n) T  + \gamma F_n \varepsilon_n },
\end{eqnarray*}
where we have set 
\begin{equation*}
	c_{n,t} := \varepsilon \exp\big(\gamma g(\lambda \sigma_t) - \gamma F_n) t + (\gamma F_n - \gamma g(\lambda \sigma_t)) \sigma_t  + a_1 g(\lambda \sigma_t)\log \sigma_t - \gamma F_n(\sigma_t  - \sfrac{1}{\lambda}\log n) \big).
\end{equation*}
Hence,
\begin{eqnarray*}
 	 && \pr \bigg( \max_{n \in \I_t(v)} Z_n (t)  \ge  \varepsilon \e^{\gamma g(\lambda \sigma_t)(t-\sigma_t ) + a_1 g(\lambda \sigma_t)\log \sigma_t - \gamma T}  \bigg)  \le \pr\bigg (\bigcup_{n \in \I_t(v)} \Big\{A_n \ge c_{n,t} \e^{- \gamma (1-F_n) T  + \gamma F_n \varepsilon_n }\Big\}\bigg). 
\end{eqnarray*}
Moreover, for any $y>0$, we have
\begin{align*}
	\pr\bigg(\bigcup_{n \in \I_t(v)} \Big\{ A_n\ge c_{n,t} & \e^{-\gamma (1-F_n) T +  \gamma F_n \varepsilon_n} \Big\}\bigg) \\ & \le \sum_{n \in \I_t(v)} \pr \big( A_n \ge  c_{n,t}\e^{-\gamma y} \big) + \pr (|T| \ge y) + \pr \Big(\sup_{n \in \I_t(v)}|\varepsilon_n| \ge y \Big).
\end{align*}
Since $\varepsilon_n\to0$ almost surely and $|T|$ is finite, we can fix $y > 0 $ large enough, such that $\pr(|T| \ge y) \le \frac \eta 3$ and $\pr(\sup_{n \in \I_t(v)} |\varepsilon_n| \ge y ) \le \frac \eta 3$.
Consider 
\begin{equation*}
	S := \sum_{n \in \I_t(v)} \pr \big( A_n \ge  c_{n,t}\e^{-\gamma y}\big) =  \sum_{n \in \I_t(v)} \E \big[\pr( A_n \ge  c_{n,t}\e^{-\gamma y}| {(F_m)_{m\in \N}})\big].
\end{equation*} 
By Assumption~{\bf (A4)}, $\pr(A_n \ge u | {(F_m)_{m \in \N}}) \le c_0 \e^{-\eta u}$, so we get
	\begin{eqnarray*}
		S & \le & c_0 \sum_{n \in \I_t(v)} \E \Big[\exp \Big\{ -\eta \varepsilon \e^{(\gamma g(\lambda \sigma_t) - \gamma F_n) t + (\gamma F_n -\gamma g(\lambda \sigma_t)) \sigma_t  + a_1 g(\lambda \sigma_t) \log \sigma_t -\gamma y - \gamma F_n(\sigma_t  - \frac{1}{\lambda} \log n)} \Big\} \Big] \\
		& \le & c_0 \int_{\I_t(v)} \E \Big[ \exp \Big\{ -\eta \varepsilon \e^{( \gamma g(\lambda \sigma_t) - \gamma F) t + (\gamma F - \gamma g(\lambda \sigma_t)) \sigma_t  + a_1 g(\lambda \sigma_t) \log \sigma_t - \gamma y   - \gamma F(\sigma_t  -\frac{1}{\lambda} \log x)} \Big\} \Big] \di x,
	\end{eqnarray*}
where $F$ is a random variable of law $\mu$. Let $x = \exp\big\{\lambda (\sigma_t  + w \sqrt{\sigma_t }) \big\}$, therefore we can write
\begin{align*}
	S & \le   c_0 \int_{|w| \ge v} \!\!\!\lambda \sqrt{\sigma_t } \e^{\lambda(\sigma_t  + w \sqrt{\sigma_t })} \E \Big[ \exp \big\{\!\!-\eta \varepsilon 
	\e^{(\gamma g(\lambda \sigma_t) - \gamma F)t + (\gamma F - \gamma g(\lambda \sigma_t))\sigma_t  + a_1 g(\lambda \sigma_t)\log \sigma_t - \gamma y + \gamma F w \sqrt{\sigma_t }} \big\} \Big] \di w \\
	& \le c_0 \int_{|w| \ge v} \lambda \sqrt{\sigma_t } \e^{\lambda(\sigma_t  + w \sqrt{\sigma_t } )} \\
	&  \, \, \,\times \int_0^1 \pr \Big(\exp \Big\{-\eta \varepsilon 
	\e^{(\gamma g(\lambda \sigma_t) - \gamma F)t + (\gamma F - \gamma g(\lambda \sigma_t))\sigma_t  + a_1 g(\lambda \sigma_t) \log \sigma_t - \gamma  y + \gamma F w \sqrt{\sigma_t }} \Big\} \ge x \Big) \di x \: \di w \\
	&  =:  c_0 \int_{|w| \ge v} \lambda \sqrt{\sigma_t } \e^{\lambda(\sigma_t  + w \sqrt{\sigma_t } )} \int_0^1 P(x) \di x  \:  \di w.
\end{align*}
Letting $\tilde{x}_0 = 1 + w\sigma_t^{\nicefrac{-1}{2}}$ and substituting into $\mu(x,1) = \exp\{-m(x)\}$, we get
\begin{align*}
	& P(x)  =  \pr \Big(\exp \Big\{-\eta\varepsilon \e^{\gamma g(\lambda \sigma_t) t - \gamma g(\lambda \sigma_t)\sigma_t + a_1 g(\lambda \sigma_t) \log \sigma_t  - \gamma y  + \gamma F(-t + \tilde{x}_0 \sigma_t)} \Big\} \ge x \Big) \\
	& =  \pr \Big(F \ge \big(\gamma t - \gamma \tilde{x}_0 \sigma_t\big)^{-1}\Big(\gamma g(\lambda \sigma_t) t - \gamma g(\lambda \sigma_t) \sigma_t + a_1 g(\lambda \sigma_t) \log \sigma_t  - \gamma  y -   \log \big( -\sfrac{1}{\eta\varepsilon} \log x \big) \Big)  \Big) \\
	& =  \exp \Big \{ -m \Big( (\gamma t - \gamma \tilde{x}_0 \sigma_t)^{-1} \Big(\gamma g(\lambda \sigma_t) t - \gamma g(\lambda \sigma_t) \sigma_t + a_1 g(\lambda \sigma_t) \log \sigma_t  - \gamma y - \log \big( -\sfrac{1}{\eta\varepsilon} \log x\big) \Big) \Big) \Big \} \\
	& =  \exp \Big \{ -m \Big(\Big(1- \tilde{x}_0 \sfrac{\sigma_t}{t}\Big)^{-1} \Big(g(\lambda \sigma_t) - \sfrac{g(\lambda \sigma_t)}{t} \sigma_t + \sfrac{a_1 g(\lambda \sigma_t)}{\gamma t} \log \sigma_t  - \sfrac{ y }{t}- \sfrac{1}{\gamma t}\log \big( -\sfrac{1}{\eta\varepsilon} \log x \big) \Big) \Big) \Big \}.
\end{align*}
We can approximate $P(x)$ by 
\begin{align*}
&P(x)  \\
&	= \exp \Big \{ -m \Big( \Big(1+ \tilde{x}_0 \sfrac{\sigma_t}{t} \Big) \Big(g(\lambda \sigma_t) - \sfrac{\gamma g(\lambda \sigma_t)}{\gamma t} \sigma_t + \sfrac{a_1 g(\lambda \sigma_t)}{\gamma t} \log \sigma_t - \sfrac{y }{t} - \sfrac{1}{\gamma t}\log \big( -\sfrac{1}{\eta\varepsilon}\log x \big)\Big) + \mathcal{O}\big(\sfrac{\sigma_t}{t}\big)^2  \Big) \Big \} \\
&	 = \exp \Big \{ -m \Big(  g(\lambda \sigma_t)
	+ \sfrac{w \sqrt{\sigma_t}}{t} g(\lambda \sigma_t)
	+ \sfrac{a_1 g(\lambda \sigma_t)}{\gamma t} \log \sigma_t - \sfrac{ y }{t} - \sfrac{1}{\gamma t}\log \big( -\sfrac{1}{\eta\varepsilon} \log x \big)  + \mathcal{O}\big(\sfrac{\sigma_t}{t}\big)^2  \Big) \Big \}. 
\end{align*}
Lemma \ref{lem:MVT} (Equation \eqref{equ:MVT3}) implies
\begin{eqnarray*}
	P(x) & = & \exp \Big \{ - m\big(g(\lambda \sigma_t)\big)  - m'\big( g(\lambda \sigma_t)\big)\Big( \sfrac{w \sqrt{\sigma_t}}{t} g(\lambda \sigma_t) + \sfrac{a_1 g(\lambda \sigma_t)}{\gamma t} \log \sigma_t - \sfrac{y }{t} - \frac{1}{\gamma t}\log \big( -\sfrac{1}{\eta\varepsilon}\log x \big) \Big) \\
	& & \hspace{1cm} - \frac{1}{2} m''(c_1)\Big( \frac{w \sqrt{\sigma_t}}{t} g(\lambda \sigma_t)+ \frac{a_1 g(\lambda \sigma_t)}{\gamma t} \log \sigma_t - \sfrac{ y }{t} - \sfrac{1}{\gamma t}\log \big( -\sfrac{1}{\eta\varepsilon} \log x \big) \Big)^2 \Big \}.
\end{eqnarray*}
Recall that $m(g(\lambda \sigma_t)) = \lambda \sigma_t$ and $m'(g(\lambda \sigma_t)) = \frac{\lambda (t -\sigma_t)} {g(\lambda \sigma_t)} $. Using Assumption {\bf (A5.3)}, one can show that $m''(g(\lambda \sigma_t)) \sim \frac{\lambda \varkappa t^2}{\sigma_t (g(\lambda \sigma_t))^3}$ as $t$ goes to infinity. Therefore we get 
\begin{eqnarray*}
	P(x) & = & \exp \Big \{ -\Big( \lambda \sigma_t+ \sfrac{ \lambda(t - \sigma_t)}{g(\lambda \sigma_t)}\Big(\sfrac{w \sqrt{\sigma_t}}{t} g(\lambda \sigma_t) + \sfrac{g(\lambda \sigma_t)}{2 \lambda t} \log \sigma_t  - \sfrac{ y }{t} - \sfrac{1}{\gamma t}\log \big( -\sfrac{1}{\eta\varepsilon} \log x \big) \Big) \\
	&& \hspace{1cm} + \sfrac{\lambda \varkappa t^2}{2 \sigma_t (g(\lambda \sigma_t))^3} \Big(  \sfrac{w \sqrt{\sigma_t}}{t} g(\lambda \sigma_t) + \sfrac{g(\lambda \sigma_t)}{2 \lambda t} \log \sigma_t - \sfrac{ y }{t} - \sfrac{1}{\gamma t}\log \big( -\sfrac{1}{\eta\varepsilon}\log x \big) \Big)^2  \Big) + o(1)\Big \}\\
	& = & \sigma_t^{-1/2}\exp \Big \{ -\lambda \sigma_t - \lambda w \sqrt{\sigma_t} + \sfrac{\lambda y}{g(\lambda \sigma_t)} + \sfrac{\lambda}{ \gamma g(\lambda \sigma_t)} \log \big(-\sfrac{1}{\eta\varepsilon} \log x \big) - \sfrac{\lambda \varkappa w^2}{2 g(\lambda \sigma_t)} + o(1) \Big \}.
\end{eqnarray*}
Hence we get
\begin{eqnarray*}
	S & \le & c_0 \int_{|w| \ge v} \lambda \sqrt{\sigma_t } \e^{\lambda(\sigma_t  + w \sqrt{\sigma_t })}
	\sigma_t^{-\frac{1}{2}}\e^{ - \lambda (\sigma_t  + w \sqrt{\sigma_t })  + \frac{\lambda y}{g(\lambda \sigma_t)} - \frac{\lambda \varkappa w^2}{2 g(\lambda \sigma_t)}+ o(1)}	
	\int_0^1 \e^{\frac{\lambda}{ \gamma g(\lambda \sigma_t)} \log \big(-\sfrac{1}{\eta\varepsilon} \log x \big)} \dx \: \di w \\
	&\le & c_0  \int_{|w| \ge v} \lambda \e^{\frac{\lambda y}{g(\lambda \sigma_t)} -\frac{\lambda \varkappa  w^2}{2 g(\lambda \sigma_t)} + o(1)} \Gamma \Big(\sfrac{\lambda}{\gamma g(\lambda \sigma_t)} + 1 \Big) \, \di w 
	 =  \mathcal{O} \bigg( \int_{|w| \ge v}  \exp \Big \{ \frac{\lambda y}{g(\lambda \sigma_t)} -\frac{\lambda \varkappa w^2}{2 g(\lambda \sigma_t)} \Big\} \di w \bigg),
\end{eqnarray*}
which goes to $0$ as $v$ goes to infinity, uniformly for all $t\geq 1$.
\end{proof}
\subsection{Contribution of unfit families} \label{sec:unfit}
\begin{lem}[Negligibility of families with small fitnesses]{}
For every $\eta > 0$ and $\varepsilon > 0$, there exists $\kappa>0$ such that for all sufficiently large $t$, we have 
	\begin{equation*}
		\pr \bigg( \max_{n \le M(t)}   \id \Big \{  \frac{F_n - g \big(\log(n \sqrt{\sigma_t })\big)}{g'\big(\log(n \sqrt{\sigma_t })\big)} \le - \kappa \Big \} \e^{-\gamma g(\lambda \sigma_t)(t - \sigma_t ) - a_1 g(\lambda \sigma_t) \log \sigma_t +\gamma T} Z_n(t) \ge \varepsilon \bigg) \le \eta. 
	\end{equation*} 
	\label{lem:lem6_2}
\end{lem}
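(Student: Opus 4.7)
My plan is to mimic the proof of Lemma~\ref{lem:lem6_3}, now restricting to families with small fitness parameter $f_n := (F_n - g(\log(n\sqrt{\sigma_t})))/g'(\log(n\sqrt{\sigma_t})) \le -\kappa$. I would again set $A_n := \max_{u \ge \tau_n} Z_n(u)\e^{-\gamma F_n(u-\tau_n)}$ and observe that, on the event $\{f_n \le -\kappa,\, Z_n(t) \ge \varepsilon\e^{\gamma g(\lambda\sigma_t)(t-\sigma_t) + a_1 g(\lambda\sigma_t)\log\sigma_t - \gamma T}\}$, one has $A_n \ge c_{n,t}\,\e^{-\gamma(1-F_n)T + \gamma F_n\varepsilon_n}$ with the same $c_{n,t}$ as in Lemma~\ref{lem:lem6_3}. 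Choosing $y$ so that $\pr(|T|\ge y) + \pr(\sup_n|\varepsilon_n|\ge y) \le \eta/2$, a union bound combined with Assumption~{\bf (A4)} reduces the task to showing that
\[
S(\kappa, t) \ :=\ \sum_{n=1}^{\infty} \E\bigl[\id_{\{f_n \le -\kappa\}}\, c_0\, \exp\{-\eta\, c_{n,t}(F_n)\e^{-\gamma y}\}\bigr]
\]
can be made at most $\eta/2$ by choosing $\kappa$ large, uniformly for $t$ large.

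I would then perform the double substitution $n = \e^{\lambda(\sigma_t + w\sqrt{\sigma_t})}$ and $F = g(\log(n\sqrt{\sigma_t})) + f\,g'(\log(n\sqrt{\sigma_t}))$ with $f \le -\kappa$. A Taylor expansion of $m$ around $g(\log(n\sqrt{\sigma_t}))$ combined with Assumption~{\bf (A5.2)} shows that the push-forward of $\mu$ in the variable $f$ has density $\e^{-f}/(n\sqrt{\sigma_t})\,(1+o(1))$, and the Jacobian $\lambda n\sqrt{\sigma_t}$ of the $n\to w$ substitution cancels the factor $1/(n\sqrt{\sigma_t})$, leaving $\lambda\,\e^{-f}\,dw\,df$ as the effective integration measure. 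Reusing the Taylor expansion of $\gamma F_n(t-\tau_n)$ from the proof of Proposition~\ref{prop:cv_gamma} (equation~\eqref{equ:exponents}), with $f_n$ replaced by the integration variable $f$, I obtain
\[
c_{n,t}(F)\, \e^{-\gamma y} \ \ge\ \varepsilon\, \exp\bigl\{a_2 w^2 - a_3 f - \gamma y + o(1)\bigr\},
\]
valid uniformly in $|w| \le v$ (contributions from $|w|>v$ being absorbed by Lemma~\ref{lem:lem6_3}) and in the range $-b(t) \le f \le -\kappa$ with $b(t) = o(\sqrt{\sigma_t})$; the extreme tail $f \le -b(t)$, where $F_n$ is so close to $0$ that the family barely grows, will be handled by a separate coarser bound based on Assumption~{\bf (A4)}.

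The remaining integral estimate is
\[
S(\kappa, t) \ \le\ C \int_{-v}^{v} dw \int_{-\infty}^{-\kappa} df\, \e^{-f}\, \exp\bigl\{-\eta\varepsilon\,\e^{a_2 w^2 - a_3 f - \gamma y}\bigr\} + o_\kappa(1),
\]
which I would handle via the change of variable $u = \eta\varepsilon\,\e^{a_2 w^2 - a_3 f - \gamma y}$, converting the inner integral into a multiple of the incomplete Gamma function $\Gamma(\lambda/\gamma, u_\kappa)$ with $u_\kappa = \eta\varepsilon\,\e^{a_3\kappa + a_2 w^2 - \gamma y} \to \infty$ as $\kappa \to \infty$, uniformly in $w$; the Gaussian factor $\e^{-a_2 w^2 /a_3}$ from the substitution gives absolute convergence in $w$, and hence $S(\kappa,t) \to 0$ as $\kappa \to \infty$ uniformly in $t$. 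The hard part will be ensuring the Taylor expansion of $c_{n,t}$ remains controlled when $f$ is unbounded below: the cross term $f\,x_n\,g''(c_2)(t-\sigma_t)$ in the expansion of $\gamma F_n(t-\tau_n)$ scales like $|f|/\sqrt{\sigma_t}$, so the clean form above is only valid for $|f| = o(\sqrt{\sigma_t})$. This forces the intermediate truncation $|f| \le b(t)$ and a separate elementary argument for the extreme tail, where families with near-zero fitness simply cannot grow fast enough to beat the scaling factor $\e^{\gamma g(\lambda\sigma_t)(t-\sigma_t)}$ irrespective of the value of $A_n$.
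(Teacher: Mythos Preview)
Your initial reduction (introducing $A_n$, $c_{n,t}$, choosing $y$ so that $\pr(|T|\ge y)+\pr(\sup_n|\varepsilon_n|\ge y)\le\eta/2$, and applying Assumption~{\bf (A4)}) matches the paper's proof exactly. The divergence is in how the sum $S$ is estimated afterwards.

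You propose to change variables to $(w,f)$ and integrate against the density of the rescaled fitness. The paper instead writes the inner expectation via the tail formula $\E[X]=\int_0^1\pr(X\ge x)\,dx$, obtaining
\[
E=\int_0^1\bigl(\mu(f_x,1)-\mu(\hat f_w(\kappa),1)\bigr)_+\,dx,
\]
where $f_x$ is an explicit threshold in the original $F$-scale and $\hat f_w(\kappa)$ encodes the constraint $f\le-\kappa$. This buys two things. First, the paper integrates over all $w\in\R$ directly, without invoking Lemma~\ref{lem:lem6_3}: the Taylor expansion of $m$ around $g(\lambda\sigma_t)$ produces a Gaussian factor $\exp\{-\lambda\varkappa w^2/(2g(\lambda\sigma_t))\}$ that makes the $w$-integral converge. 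Second, and more importantly, it completely sidesteps the unbounded-$f$ problem you flag: since the computation stays in the $F$-variable and only evaluates tail probabilities $\mu(\cdot,1)=\e^{-m(\cdot)}$, no expansion in the rescaled coordinate $f$ is ever needed, and no truncation at $f=-b(t)$ arises. The final answer is an incomplete Gamma function $\Gamma(\lambda/\gamma+1,\,\eta\varepsilon\,\e^{\gamma(\kappa+\lambda\varkappa w^2/2)/\lambda})$, just as in your sketch, but reached without case-splitting.

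Your proposed handling of the extreme tail $f\le -b(t)$ is also not quite right as stated: for $|w|\le v$ and $b(t)=o(\sqrt{\sigma_t})$, one has $F_n\approx g(\lambda\sigma_t)-b(t)g'(\lambda\sigma_t)\approx 1-b(t)/(\lambda t)$, which is still very close to $1$, not to $0$; so the heuristic ``near-zero fitness, barely grows'' does not apply. A correct argument for that regime would have to balance the blowing-up density $\e^{-f}$ against the double-exponential decay $\exp\{-\eta\varepsilon\,\e^{-a_3 f}\}$ without the benefit of the Taylor expansion of $c_{n,t}$, which is feasible but not as elementary as you suggest. The paper's distribution-function approach avoids this entirely.
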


\begin{proof}
Let $\varepsilon, \eta > 0$ and $\kappa > 0$. We analyse the event that there exists a family with fitness at most
\begin{equation*}
	f_n(\kappa) := g\big(\log(n\sqrt{\sigma_t })\big) -\kappa g'\big(\log(n\sqrt{\sigma_t })\big)
\end{equation*}
and size at least $\varepsilon \exp \{\gamma g(\lambda \sigma_t)(t - \sigma_t ) + a_1 g(\lambda \sigma_t) \log \sigma_t - \gamma T \}$. Similarly to the proof of Lemma \ref{lem:lem6_3} we define, for all $n \ge 1$,
\begin{equation*}
	A_n:= \max_{u\ge\tau_n} Z_n(u) \e^{-\gamma F_n (u - \tau_n)},
\end{equation*}
and as before we define 
	\begin{equation*}
	c_{n,t} := \varepsilon \exp\big\{(\gamma g(\lambda \sigma_t) - \gamma F_n) t + (\gamma F_n - \gamma g(\lambda \sigma_t)) \sigma_t  + a_1 g(\lambda \sigma_t)\log \sigma_t - \gamma F_n(\sigma_t  - \sfrac{1}{\lambda}\log n)\big\}.
	\end{equation*}
It can be shown that 
	\begin{multline*}
		\qquad \pr \Big( \max_{n\le M(t)} \id_{F_n  \le  f_n(\kappa)} Z_n(t)   \ge  \varepsilon \e^{\gamma g(\lambda \sigma_t)(t-\sigma_t ) + a_1 g(\lambda \sigma_t) \log \sigma_t }  \Big)  \\
		\le \sum_{n =1 }^{\infty} \pr \big( A_n \id_{F_n \le f_n(\kappa)} \ge  c_{n,t}\e^{-\gamma y} \big) + \pr (|T| \ge y) + \pr \big(\sup_{n \in \N}|\varepsilon_n| \ge y \big),\qquad
	\end{multline*}
where $y > 0$ is large enough, so that $\pr (|T| \ge y)\le \frac \eta 3$ and $\pr (\sup_{n \in \N}|\varepsilon_n| \ge y ) \le \frac \eta 3$. Set
\begin{equation*} 
	S := \sum_{n =1}^\infty \pr ( A_n  \id_{F_n \le f_n(\kappa)} \ge  c_{n,t}\e^{-\gamma y}) = \sum_{n =1}^\infty \E \big[\id_{F_n \le f_n(\kappa)}  \pr ( A_n  \ge  c_{n,t}\e^{-\gamma y}\mid {(F_m)_{m\in \N}}) \big].
\end{equation*} 
By Assumption~{\bf (A4)}, $\pr(A_n \ge u \mid {(F_m)_{m \in \N}}) \le c_0 \e^{-\eta u}$, which implies
	\begin{eqnarray*}
		S & \le & c_0 \sum_{n=1}^\infty \E \Big[ \id_{F_n \le f_n(\kappa)} \exp \Big\{ -\eta \varepsilon \e^{(\gamma g(\lambda \sigma_t) - \gamma F_n) t + (\gamma F_n -\gamma g(\lambda \sigma_t)) \sigma_t  + a_1 g(\lambda \sigma_t) \log \sigma_t -\gamma y - \gamma F_n(\sigma_t  - \frac{1}{\lambda} \log n)} \Big\} \Big] \\
		& \le & c_0 \int_0^\infty \E \Big[ \id_{F \le f_x(\kappa)} \exp \Big\{ -\eta \varepsilon \e^{(\gamma g(\lambda \sigma_t) - \gamma F) t + (\gamma F -\gamma g(\lambda \sigma_t)) \sigma_t  + a_1 g(\lambda \sigma_t) \log \sigma_t -\gamma y - \gamma F(\sigma_t  - \frac{1}{\lambda} \log x)} \Big\} \Big] \di x,
	\end{eqnarray*}
where $F$ is a random variable of law $\mu$. {\cec We change variables by setting} $x = \exp\big\{\lambda (\sigma_t  + w \sqrt{\sigma_t }) \big\}$ and set $\hat{f}_w(\kappa) := f_{ \exp\{\lambda (\sigma_t  + w \sqrt{\sigma_t })\}} (\kappa)$. This yields
	\begin{eqnarray*}
	S & \le & c_0 \int_{-\infty}^\infty \lambda \sqrt{\sigma_t } \e^{\lambda(\sigma_t  + w \sqrt{\sigma_t })} \\
	& &  \hspace{1cm} \times \E \Big[ \id_{F \le \hat{f}_w(\kappa)} \exp \Big\{-\eta \varepsilon 
	\e^{(\gamma g(\lambda \sigma_t) - \gamma F)t + (\gamma F - \gamma g(\lambda \sigma_t))\sigma_t  + a_1 g(\lambda \sigma_t) \log \sigma_t -\gamma y + \gamma F w \sqrt{\sigma_t }} \Big\} \Big] \di w.
	\end{eqnarray*}
Denoting by $\tilde{x}_0 := 1 + w {\sigma_t }^{\nicefrac{-1}{2}}$ and 
\begin{eqnarray*}
	E&:=& \E \Big[ \id_{F \le \hat{f}_w(\kappa)} \exp \Big\{-\eta \varepsilon 
	\e^{(\gamma g(\lambda \sigma_t) - \gamma F)t + (\gamma F - \gamma g(\lambda \sigma_t))\sigma_t  + a_1 g(\lambda \sigma_t) \log \sigma_t -\gamma y + \gamma F w \sqrt{\sigma_t }} \Big\} \Big],
\end{eqnarray*}
we get
\begin{eqnarray*}
	E & = &  \int_0^1 \pr \Big(F \le \hat{f}_w(\kappa);  \: \exp \Big\{-\eta \varepsilon \e^{\gamma g(\lambda \sigma_t) t - \gamma g(\lambda \sigma_t) \sigma_t  +a_1 g(\lambda \sigma_t) \log \sigma_t -\gamma y + \gamma F(-t + \tilde{x}_0 \sigma_t )} \Big\}  \ge x \Big)  \dx  \\
	& = & \int_0^1 \pr \Big( \sfrac {\gamma g(\lambda \sigma_t) t - \gamma g(\lambda \sigma_t) \sigma_t  + a_1 g(\lambda \sigma_t) \log \sigma_t -\gamma y - \log \big( \sfrac{1}{\eta \varepsilon}\log \big( \frac{1}{x} \big)\big)}{\gamma (t - \tilde{x}_0 \sigma_t )} \le F \le \hat{f}_w(\kappa) \Big) \, \dx.
\end{eqnarray*}
{\cec This integral is zero if the lower bound in the probability is larger than the upper bound.}
Note that 
	\begin{eqnarray*}
		f_x & := & \sfrac {\gamma g(\lambda \sigma_t) t - \gamma g(\lambda \sigma_t) \sigma_t  + a_1 g(\lambda \sigma_t) \log \sigma_t -\gamma y - \log \big( \sfrac{1}{\eta \varepsilon}\log \big( \frac{1}{x} \big)\big)}{\gamma (t - \tilde{x}_0 \sigma_t )} \\
		& = & \Big(g(\lambda \sigma_t) - \sfrac{ g(\lambda \sigma_t)}{t} \sigma_t  + \sfrac{a_1 g(\lambda \sigma_t)}{\gamma t} \log \sigma_t -\sfrac yt - \sfrac{1}{\gamma t}\log \big( -\sfrac{1}{\eta\varepsilon}\log x\big) \Big) \Big( 1 + \sfrac{\tilde{x}_0 }{t} \sigma_t + \mathcal{O}\big(\sfrac{\sigma_t }{t}\big)^2 \Big)  \\
		& = & g(\lambda \sigma_t) - \big( g(\lambda \sigma_t)- g(\lambda \sigma_t) \tilde{x}_0 \big) \sfrac{\sigma_t }{t} + \sfrac{a_1 g(\lambda \sigma_t)}{\gamma t} \log \sigma_t -\sfrac yt  - \sfrac{1}{\gamma t}\log \big( -\sfrac{1}{\eta\varepsilon}\log x\big) + o \big( \sfrac{1}{t}\big). 
	\end{eqnarray*}
We have
		$E = \int_0^1 \big( \mu(f_x, 1) - \mu(\hat{f}_w(\kappa), 1)\big) \dx.$
By Lemma \ref{lem:MVT} (Equation \eqref{equ:MVT3}),
\begin{eqnarray*}
	\mu\big(f_x,1\big) & = & \exp \Big \{ -m \Big(g(\lambda \sigma_t) - \big(g(\lambda \sigma_t) - g(\lambda \sigma_t) \tilde{x}_0 \big) \sfrac{\sigma_t }{t} + \sfrac{a_1 g(\lambda \sigma_t)}{\gamma t} \log \sigma_t -\sfrac yt  \\
	& & \hspace{1cm} -  \sfrac{1}{\gamma t}\log \big( -\sfrac{1}{\eta \varepsilon}\log x\big) + o \big( \sfrac{1}{t}\big) \Big) \Big \} \\
	& = & \exp \Big \{ -m \Big( g(\lambda \sigma_t) + g(\lambda \sigma_t) \sfrac{w \sqrt{\sigma_t}}{t} + \sfrac{g(\lambda \sigma_t)}{2 \lambda t} \log \sigma_t -\sfrac yt  - \sfrac{1}{\gamma t}\log \big( -\sfrac{1}{\eta \varepsilon}\log x\big) + o \big( \sfrac{1}{t}\big) \Big) \Big \} \\
	& = & \exp \Big \{-m \big( g(\lambda \sigma_t) \big) - m'\big( g(\lambda \sigma_t)\big)\Big(g(\lambda \sigma_t) \sfrac{w \sqrt{\sigma_t}}{t} + \sfrac{g(\lambda \sigma_t)}{2 \lambda t} \log \sigma_t -\sfrac yt  - \sfrac{1}{\gamma t}\log \big( -\sfrac{1}{\eta \varepsilon}\log x\big) \Big)  \\
	& &  \hspace{1cm} - \frac{1}{2}m''(c_3) \Big( g(\lambda \sigma_t) \sfrac{w \sqrt{\sigma_t}}{t} + \sfrac{ g(\lambda \sigma_t)}{2 \lambda t} \log \sigma_t -\sfrac yt  - \sfrac{1}{\gamma t}\log \big( -\sfrac{1}{\eta \varepsilon}\log x\big) \Big)^2 \Big \} \\
	& = & \exp \Big \{ - \lambda \sigma_t - \lambda w \sqrt{\sigma_t} - \log \sqrt{\sigma_t} + \sfrac{\lambda y}{g(\lambda \sigma_t)}+ \sfrac{\lambda}{\gamma g(\lambda \sigma_t) }\log \big( -\sfrac{1}{\eta \varepsilon}\log x\big) - \sfrac{\lambda \varkappa w^2}{2 g(\lambda \sigma_t)} + o(1) \Big \},
\end{eqnarray*}
since $m''(g(\lambda \sigma_t)) \sim \frac{\lambda \varkappa t^2}{ \sigma_t (g(\lambda \sigma_t))^3 }$, by Assumption {\bf (A5.3)}. Using Lemma \ref{lem:MVT}, (Equation \eqref{equ:MVT4}), and the fact that $m'(g(x))g'(x) = 1$ for all $x > 0$, we get
\begin{eqnarray*}
	\mu\big(\hat{f}_w(\kappa), 1\big) & = & \exp \Big\{- m \Big( g\big(\lambda \tilde{x}_0\sigma_t + \log \sqrt{\sigma_t}\big) - \kappa g' \big(\lambda \tilde{x}_0 \sigma_t + \log \sqrt{\sigma_t}  \big) \Big) \Big\} \\
	& = & \exp \Big \{ - m \Big (g \big( \lambda \tilde{x}_0 \sigma_t + \log \sqrt{\sigma_t} \big) \Big) + m' \Big( g \big( \lambda \tilde{x}_0 \sqrt{\sigma_t} + \log \sqrt{\sigma_t} \big) \Big) \kappa g' \big(\lambda \tilde{x}_0 \sigma_t + \log \sigma_t \big) \\
	& &  \hspace{1cm} -  \sfrac{1}{2}m''(c_4) \Big( \kappa g'\big( \lambda \tilde{x}_0 \sigma_t + \log \sqrt{\sigma_t} \big)\Big)^2 \Big \} \\
	& = & \exp \Big \{-\lambda \tilde{x}_0 \sigma_t - \log \sqrt{\sigma_t} + \kappa - \sfrac{\lambda \varkappa t^2}{2 \sigma_t( g(\lambda \sigma_t))^3}\Big(\kappa g'\big(\lambda \tilde{x}_0 \sigma_t + \log \sqrt{\sigma_t} \big) \Big)^2  \Big \}\\
	& = &  \exp \Big \{-\lambda \tilde{x}_0 \sigma_t - \log \sqrt{\sigma_t} + \kappa + o(1)  \Big \},
\end{eqnarray*}
as $t \rightarrow \infty$. This last equality holds in view of Lemma \ref{lem:MVT}, (Equation \eqref{equ:MVT5}), since \begin{eqnarray*}
	\frac{ \lambda \varkappa  t^2}{2 (g(\lambda \sigma_t))^3 \sigma_t}\Big(\kappa g'\big(\lambda \tilde{x}_0 \sigma_t + \log \sqrt{\sigma_t} \big) \Big)^2 & = &  \frac{ \lambda \varkappa  t^2}{2 (g(\lambda \sigma_t))^3 \sigma_t}\kappa^2 \Big(g'(\lambda \sigma_t) + g''(c_2)(\lambda w \sigma_t+ \log \sqrt{\sigma_t})\Big)^2 \\
	& = & \frac{\lambda \varkappa t^2}{2 (g(\lambda \sigma_t))^3 \sigma_t}\kappa^2 \Big( \frac{ g(\lambda \sigma_t)}{ \lambda (t-\sigma_t)} - \frac{\varkappa}{\sigma_t t} \big( \lambda w \sqrt{\sigma_t} + \log \sqrt{\sigma_t}\big) \Big)^2 \\
	& = & \mathcal{O}(\sigma_t^{-1}) = o(1).
\end{eqnarray*}
For $E > 0$, we need $\mu(f_x,1) > \mu(\hat{f}_w(\kappa),1) $, which holds if and only if 
\begin{equation*}
	x \le \exp \Big \{ -\frac{\varepsilon}{2} \exp \Big\{ \frac{\gamma}{\lambda} g(\lambda \sigma_t) \Big(\kappa + \frac{\lambda \varkappa  w^2}{2 g(\lambda \sigma_t)} -\frac{\lambda y}{g(\lambda \sigma_t)} \Big) \Big\} \Big \} =: f_1.
\end{equation*}
{Since $g(\lambda \sigma_t) \rightarrow 1$ as $t \rightarrow \infty$, we get }
	\begin{equation*}
		f_1 = \exp \Big \{ -\eta \varepsilon \exp \Big\{ \frac{\gamma}{\lambda}  \Big(\kappa + \frac{\lambda \varkappa w^2}{2} \Big)   - \lambda y + o(1) \Big\} \Big \}. 
	\end{equation*}
Hence we can rewrite $E$ as
\begin{eqnarray*}
	E & = & \big(1+o(1)\big)\e^{ -\lambda \sigma_t - \lambda w \sqrt{\sigma_t} - \log{\sqrt{\sigma_t}} } \int_0^{f_1} \Big( \exp \big \{ \sfrac{\lambda}{\gamma}\log \big( -\sfrac{1}{\eta \varepsilon}\log x\big) - \sfrac{\lambda \varkappa w^2}{2} + \lambda y \big \} - \e^\kappa \Big) \dx \\
	& = & \big(1+o(1)\big) \e^{ -\lambda \sigma_t - \lambda w \sqrt{\sigma_t} - \log{\sqrt{\sigma_t}} } \Big( \e^{\lambda y-\frac{\lambda \varkappa w^2}{2 }}  \Big ( \sfrac{1}{\eta \varepsilon}\Big)^{\frac{\lambda}{\gamma }} \int_0^{f_1} \big(\log \sfrac{1}{x} \big)^{\frac{\lambda}{\gamma }} \dx  - \int_0^{f_1} \e^\kappa \dx \Big) \\
	& = & \big(1+o(1)\big) \e^{ -\lambda \sigma_t - \lambda w \sqrt{\sigma_t} - \log{\sqrt{\sigma_t}} } \Big (\e^{\lambda y - \frac{\lambda \varkappa  w^2}{2 }}  \Big ( \sfrac{1}{\eta \varepsilon}\Big)^{\frac{\lambda}{\gamma }} \Gamma \Big( \sfrac{\lambda}{\gamma }+1, {\eta \varepsilon} \e^{\frac{\gamma}{\lambda} (\kappa + \frac{\lambda \varkappa w^2}{2})} \Big) \\
&& \phantom{lobberdiddiddldybumfiddldididldidodadadadldididoo}	-  \exp \big \{\kappa - {\eta \varepsilon} \e^{\frac{\gamma}{\lambda} (\kappa + \frac{\lambda \varkappa w^2}{2} -\lambda y)}\big \} \Big),
\end{eqnarray*}
where $\Gamma (s,x) = \int_x^\infty z^{s-1} \e^{-z} \di z$ is the upper incomplete gamma function. So we get
\begin{align*}
	S & \le  \big(1+o(1)\big) c_0 \int_{-\infty}^{\infty} \lambda \Big( \e^{\lambda y -\frac{\lambda \varkappa w^2}{2 }}  \big ( \sfrac{1}{\eta \varepsilon}\big)^{\frac{\lambda}{\gamma }} \Gamma \big( \sfrac{\lambda}{\gamma }+1, {\eta \varepsilon} \e^{\frac{\gamma}{\lambda}(\kappa + \frac{\lambda \varkappa w^2}{2 } -\lambda y)} \big) 
	 -  \exp \Big \{\kappa - {\eta \varepsilon} \e^{\frac{\gamma}{\lambda} (\kappa + \frac{\lambda \varkappa w^2}{2 } -\lambda y)}\Big \} \Big) \di w.
\end{align*}
Since $\frac{\Gamma(s,x)}{x^{s-1} \e^{-x}} \rightarrow 1$  as  $x \rightarrow \infty$,
as $\kappa \rightarrow \infty$ we have
\begin{equation*}
	\Gamma \big( \sfrac{\lambda}{\gamma }+1, {\eta \varepsilon} \e^{\frac{\gamma}{\lambda} (\kappa + \frac{\lambda \varkappa w^2}{2} - \lambda y)} \big)
	\sim \big( {\eta \varepsilon} \big)^{\frac{\lambda}{\gamma}} 
	\exp \Big \{\kappa + \sfrac{\lambda \varkappa w^2}{2} - \lambda y
	- {\eta \varepsilon} \e^{\frac{\gamma }{\lambda}(\kappa + \sfrac{\lambda \varkappa w^2}{2 } - \lambda y)}\Big \}, 
\end{equation*}	
and so $S \rightarrow 0$.
\end{proof}
\subsection{Contribution of old and fit families} \label{sec:old}

\begin{lem}[Absence of fit families above the ``window"]{}
For every $\varepsilon > 0$ and $\nu > 0$, there exists $\kappa > 0$ such that for all sufficiently large $t$, we have
\begin{equation}
	\pr \bigg( \max_{n \in \I_t^c(v)} \Big( \sfrac{F_n - g(\log(n \sqrt{\sigma_t }))}{g'(\log(n \sqrt{\sigma_t }))}  \Big) \le \kappa \bigg) \ge 1 - \varepsilon,
	\label{equ:fit_fam}
\end{equation}
where $\I_t^c(v) = [n_t(-v), n_t(v)]$, $n_t(\pm v):=\exp\big\{\lambda\big(\sigma_t  \pm v \sqrt{\sigma_t } \big)\big\}$.
\label{lem:fit_fam}
\end{lem}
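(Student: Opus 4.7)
The plan is to control the probability that any family with index $n \in \I_t^c(v)$ has rescaled fitness above $\kappa$ via a direct union bound, showing that the total mass obtained in this way decays like $\e^{-\kappa}$ times a constant that does not blow up with~$t$.

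First, for a single index $n$, the event $\{(F_n - g(\log(n\sqrt{\sigma_t})))/g'(\log(n\sqrt{\sigma_t})) > \kappa\}$ has probability $\mu\bigl(g(u)+\kappa g'(u),1\bigr) = \e^{-m(g(u)+\kappa g'(u))}$, where $u=\log(n\sqrt{\sigma_t})$. Using $m(g(u))=u$ and $g'(u)=1/m'(g(u))$, a second-order Taylor expansion yields
\[
m\bigl(g(u)+\kappa g'(u)\bigr) = u + \kappa + \tfrac{1}{2}\kappa^2\, \frac{m''(\xi)}{m'(g(u))^2},
\]
for some $\xi$ between $g(u)$ and $g(u)+\kappa g'(u)$. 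Since $g'(u)\to 0$ as $u\to\infty$ (because $m'(x)\to\infty$ by (A5.4)), $\xi\to 1$, and Assumption~(A5.2) then forces the quadratic error term to tend to $0$ uniformly in $n\in\I_t^c(v)$ as $t\to\infty$. Hence, for all $t$ large enough,
\[
\pr\Big(\sfrac{F_n-g(\log(n\sqrt{\sigma_t}))}{g'(\log(n\sqrt{\sigma_t}))} > \kappa\Big) \le \frac{2\,\e^{-\kappa}}{n\sqrt{\sigma_t}}, \qquad n\in\I_t^c(v).
\]

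Next I apply a union bound over $n\in\I_t^c(v) = [n_t(-v), n_t(v)]$. Using $\sum_{n=n_t(-v)}^{n_t(v)} 1/n = \log(n_t(v)/n_t(-v)) + o(1) = 2\lambda v\sqrt{\sigma_t} + o(\sqrt{\sigma_t})$, we obtain
\[
\pr\Big(\max_{n\in\I_t^c(v)}\sfrac{F_n-g(\log(n\sqrt{\sigma_t}))}{g'(\log(n\sqrt{\sigma_t}))} > \kappa\Big) \le \sum_{n\in\I_t^c(v)} \frac{2\,\e^{-\kappa}}{n\sqrt{\sigma_t}} \le 5\lambda v\,\e^{-\kappa},
\]
for $t$ sufficiently large. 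Choosing $\kappa$ large enough that $5\lambda v\,\e^{-\kappa}\le\varepsilon$ gives the claim.

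The only delicate step is justifying that the quadratic error in the Taylor expansion is $o(1)$ uniformly over $n\in\I_t^c(v)$. This is where Assumption~(A5.2) is essential; the argument needs the observation that $g(\log(n\sqrt{\sigma_t}))$ stays in any fixed neighbourhood of $1$ as $t\to\infty$ uniformly on the window, which follows from $\sigma_t\to\infty$ (Lemma~\ref{lem:sigma}) and monotonicity of $g$, so $m''(\xi)/m'(g(\log(n\sqrt{\sigma_t})))^2\to 0$ uniformly. Everything else is a straightforward union-bound calculation, and no new ingredient beyond the tail formula $\mu(x,1)=\e^{-m(x)}$ and the regularity of $m$ under~(A5) is required.
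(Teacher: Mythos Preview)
Your proof is correct and follows essentially the same route as the paper's: both reduce the single-index tail probability to $\e^{-u-\kappa+o(1)}$ via a second-order Taylor expansion of $m$ at $g(u)$, using $m(g(u))=u$, $m'(g(u))g'(u)=1$, and Assumption~(A5.2) to kill the quadratic remainder uniformly over the window, and then sum over $n\in\I_t^c(v)$ to obtain a bound of order $v\,\e^{-\kappa}$. The only cosmetic difference is that the paper writes the complementary probability as a product $\prod(1-p_n)\sim\exp(-\sum p_n)$ and approximates the sum by an integral via the substitution $x=\e^{\lambda(\sigma_t+w\sqrt{\sigma_t})}$, whereas you use a direct union bound and the harmonic-sum estimate $\sum_{n_t(-v)}^{n_t(v)}1/n\sim 2\lambda v\sqrt{\sigma_t}$; these are equivalent since all $p_n$ are small.
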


\begin{proof}
	Let $\varepsilon, v > 0$ and $\kappa > 0$.	 We have\footnote{\peter For all $x,y\in\mathbb R$, we denote by $\prod_{n=x}^y$ and $\sum_{n=x}^y$ the product (resp.\ sum) over integers with $x\leq n\leq y$.}
	\begin{eqnarray*}
	\pr \bigg( \max_{n \in \I_t^c(v)} \Big( \sfrac{F_n - g (\log(n \sqrt{\sigma_t }))}{g'(\log(n \sqrt{\sigma_t }))}  \Big) \le \kappa \bigg) &=& \prod_{n_t(-v)}^{n_t(v)} \pr \Big(F_n \le  g \big(\log(n \sqrt{\sigma_t })\big) + \kappa g'\big(\log(n \sqrt{\sigma_t }) \big) \Big) \\
	& = & \prod_{n_t(-v)}^{n_t(v)}\bigg( 1 - \mu\Big(  g \big(\log(n \sqrt{\sigma_t })\big) + \kappa g'\big(\log(n \sqrt{\sigma_t }) \big) ,1 \Big) \bigg).
	\end{eqnarray*}
Using the fact that $\e^{-\mu(x,1)} = 1 - \mu(x,1) + o(\mu(x,1))$ when $x \rightarrow 1$, we get that when $t \rightarrow \infty$,
\begin{eqnarray*}
	\pr \bigg( \max_{n \in \I_t^c(v)} \Big( \sfrac{F_n - g (\log(n \sqrt{\sigma_t }))}{g'(\log(n \sqrt{\sigma_t }))}  \Big) \le \kappa \bigg) &\sim&  \exp \bigg\{ - \sum_{n_t(-v)}^{n_t(v)} \mu \Big(g \big(\log(n \sqrt{\sigma_t })\big) + \kappa g'\big(\log(n \sqrt{\sigma_t }) \big),1 \Big) \bigg\}.
\end{eqnarray*}
Recall that $\mu(x,1) = \e^{-m(x)}$, which implies that, {\cec as $t\to\infty$},
\begin{eqnarray*}
	\pr \bigg( \max_{n \in \I_t^c(v)} \Big( \sfrac{F_n - g (\log(n \sqrt{\sigma_t }))}{g'(\log(n \sqrt{\sigma_t }))}  \Big) \le \kappa \bigg) 
	&\sim&  \exp \bigg\{ - \int_{n_t(-v)}^{n_t(v)} \e^{-m\big( g (\log(x \sqrt{\sigma_t })) + \kappa g'(\log(x \sqrt{\sigma_t }) ) \big)} \dx \bigg\}.
\end{eqnarray*}
Using the change of variables with $x = \e^{\lambda(\sigma_t + w \sqrt{\sigma_t})} = n_t(w)$, we get
\begin{align*}
	\pr \bigg( \max_{n \in \I_t^c(v)} \Big( & \sfrac{F_n - g (\log(n \sqrt{\sigma_t }))}{g'(\log(n \sqrt{\sigma_t }))}  \Big)  \le \kappa \bigg)  \\
	& \sim \exp \bigg\{ - \int_{-v}^v \e^{-m\big( g (\log(n_t(w) \sqrt{\sigma_t })) + \kappa g'(\log(n_t(w)\sqrt{\sigma_t }) ) \big)} \lambda \sqrt{\sigma_t} n_t(w) \dw \bigg\}.
\end{align*}
By the same technique as in Lemma 15(a), we get that there exists 
$$c_6 \in [g \big(\log(n_t(w) \sqrt{\sigma_t })\big), g \big(\log(n_t(w) \sqrt{\sigma_t })\big) + \kappa g'\big(\log(n_t(w)\sqrt{\sigma_t }) \big) ]$$ such that 
\begin{align*}
	 m\Big( g \big(\log(n_t(w) \sqrt{\sigma_t })\big) & + \kappa g'\big(\log(n_t(w)\sqrt{\sigma_t }) \big) \Big)\\
	&  = m\Big( g \big(\log(n_t(w) \sqrt{\sigma_t })\big) \Big)
	+\kappa m'\Big( g \big(\log(n_t(w) \sqrt{\sigma_t })\big) \Big) g'\big(\log(n_t(w)\sqrt{\sigma_t }) \big) \Big)\\ & \hspace{3cm} + \sfrac{1}{2} m''\big( c_6 \big) \Big(\kappa g'\big(\log(n_t(w)\sqrt{\sigma_t }) \big) \Big)^2 \\
	&  = \log(n_t(w) \sqrt{\sigma_t })\big) + \kappa  + \mathcal{O}(\sigma_t^{-1}) 
	 = \lambda(\sigma_t + w \sqrt{\sigma_t}) + \log(\sqrt{\sigma_t}) + \kappa + {\peter \mathcal{O}(\sigma_t^{-1}) },
\end{align*}
where we have used that $m(g(x)) = x$ and hence $m'(g(x))g'(x) = 1$ for all $x>0$. We also used the fact that $m''( c_6) (\kappa g'(\log(n_t(w)\sqrt{\sigma_t }) ))^2 \rightarrow 0$ as $t\rightarrow \infty$, by Assumption {\bf (A5.2)}. Therefore, the integral becomes
\begin{eqnarray*}
	&&  \int_{-v}^v \e^{-m\big( g (\lambda(\sigma_t + w \sqrt{\sigma_t})) + \kappa g'(\lambda(\sigma_t + w \sqrt{\sigma_t})) \big)} \sqrt{\sigma_t} \e^{\lambda(\sigma_t + w \sqrt{\sigma_t})}\dw \\
	 & & \hspace{1cm} =  \int_{-v}^v \e^{-\lambda(\sigma_t + w \sqrt{\sigma_t})  - \log (\sqrt{\sigma_t}) - \kappa - \mathcal{O}(\sigma_t^{-1})} \sqrt{\sigma_t} \e^{\lambda(\sigma_t + w \sqrt{\sigma_t})}\dw 
	 =  \int_{-v}^v \e^{-\kappa - \mathcal{O}(\sigma_t^{-1})} \dw  
	  \sim 2v\e^{-\kappa}. 
\end{eqnarray*}
Therefore, we get
\begin{eqnarray*}
	\pr \bigg( \max_{n \in \I_t^c(v)} \Big( \sfrac{F_n - g (\log(n \sqrt{\sigma_t }))}{g'(\log(n \sqrt{\sigma_t }))}  \Big) \le \kappa \bigg) &\sim& \exp \big \{ - 2v\e^{-\kappa}\big\} \rightarrow 1, \quad \text{as $\kappa \rightarrow \infty$.} \\[-1.3cm]
\end{eqnarray*}	
\end{proof}
\bigskip

\subsection{Proof of Theorem \ref{theo:theo_main}} \label{sec:main_proof}
Let $\eta, \varepsilon > 0$. By Lemma \ref{lem:lem6_2} there exists $\kappa_1 = \kappa_1 (\varepsilon, \eta)$ such that
	\begin{equation*}
		\lim_{t\rightarrow \infty} \inf \pr \Big( \Gamma_t \big([-\infty, \infty] \times [-\infty, -\kappa_1] \times (\varepsilon, \infty]\big) = 0 \Big) \ge 1 - \eta.
	\end{equation*}
By Lemma \ref{lem:lem6_3} there exists $v = v(\varepsilon, \eta) > 1$ such that 
	\begin{equation*}
		\lim_{t\rightarrow \infty} \inf \pr \Big( \Gamma_t \big([-\infty, -v] \cup [v, \infty] \times [-\infty, \infty] \times (\varepsilon, \infty]\big) = 0 \Big) \ge 1 - \eta.
	\end{equation*}
By Lemma \ref{lem:fit_fam} there exists $\kappa_2 = \kappa_2 (\varepsilon, \eta)$ such that
	\begin{equation*}
		\lim_{t\rightarrow \infty} \inf \pr \Big( \Gamma_t \big([-v, v] \times [\kappa_2, \infty] \times (\varepsilon, \infty]\big) = 0 \Big) \ge 1 - \eta.
	\end{equation*}
Finally, Proposition \ref{prop:cv_gamma} gives that $\Gamma_t$ converges on $(-v,v) \times (- \kappa_1,\kappa_2) \times (\varepsilon, \infty]$ to the Poisson process with intensity measure $\zeta$. Combining these four facts and using that $\eta > 0$ is arbitrarily small, we get convergence on $[-\infty,\infty]\times[-\infty,\infty]\times(\varepsilon,\infty]$. As this holds for all $\varepsilon > 0$ the proof is complete.	

\subsection{Proof of Corollary~\ref{cor:lim_fam}} \label{sec:proof_corollary}
(i)\ {\peter Vague convergence in distribution of $\Gamma_t$ to $\mathrm{PPP}(\zeta)$ implies convergence in distribution of $\Gamma_t(B)$ to $\mathrm{PPP}(\zeta)(B)$ for compact sets $B$ with $\zeta(\partial B)=0$, see, e.g., \cite[Proposition~3.12]{Resnick}.}
We fix $x > 0$ and $B:=[-\infty, \infty ]\times[-\infty, \infty]\times[x, \infty]$. By Theorem~\ref{theo:theo_main}, we get that, as~$t \uparrow \infty$,
	\begin{eqnarray*}
		\sum^{M(t)}_{n=1}\id_B\Big( \sfrac{\tau_n - \sigma_t } {\sqrt{\sigma_t }}, \sfrac{F_n - g(\log (n \sqrt{\sigma_t } ))}{g'(\log(n \sqrt{\sigma_t }))}, \e^{-\gamma g(\lambda \sigma_t)(t-\sigma_t ) - a_1 g(\lambda \sigma_t) \log \sigma_t +\gamma T } Z_n(t) \Big) 
		\Rightarrow \text{Poisson}\Big(\int_B \di \zeta \Big),
	\end{eqnarray*}
since $B$ is a compact set. Hence, as $t \uparrow \infty$,
	\begin{align}
		\pr \Big( \e^{-\gamma g(\lambda \sigma_t)  (t-\sigma_t )  - a_1 g(\lambda \sigma_t) \log \sigma_t + \gamma T} & \max_{n\in \{1, ..., M(t) \}}Z_n(t) \ge x \Big)  \nonumber \\
\rightarrow  \pr \Big(\text{Poisson} \Big(\int_B \di \zeta\Big) \ge 1\Big) &
		 =  1 - \pr \Big(\text{Poisson} \Big(\int_B \di \zeta\Big) = 0 \Big) 
		 =  1 - \exp \Big(-\int_B \di \zeta \Big).
		\label{equ:lem2.3}
	\end{align}
Note that
	\begin{eqnarray}
		\int_B \di \zeta & = &  \int^\infty_{-\infty} \int^\infty_{-\infty} \int^\infty_x \lambda \e^{-f} \e^{s^2 a_2 - f a_3} \nu(z\e^{s^2 a_2 -f a_3}) \: \di z \;\! \df \;\! \di s \notag\\
		& = & \lambda   \int^\infty_{-\infty} \int^\infty_{-\infty} \int^\infty_{x \e^{s^2 a_2 - f a_3}} \e^{-f} \nu(w) \: \di w \; \df \;\!  \di s 
		 =   \lambda   \int^\infty_{-\infty} \int^\infty_0 \int^\infty_{ \frac{1}{a_3} (s^2 a_2 - \log \frac{w}{x})} \e^{-f} \nu(w) \: \df \;\!  \di w \;  \di s \notag\\
		& = &\lambda  \bigg ( \int^\infty_{-\infty} \e^{- \frac{a_2}{a_3} s^2 } \di s  \bigg) \bigg( \int^\infty_0 \nu(w)  \big(\sfrac{w}{x} \big)^\frac{1}{a_3} \di w \bigg) 
		 =  \lambda \sqrt{\pi \frac{a_3}{a_2}} \Big( \int_0^\infty \nu(w) w^{\frac{1}{a_3}} \di w \Big) x^{-\frac{1}{a_3}}. \label{maxintegral}
	\end{eqnarray}
Recall that $a_2 = \nicefrac{\gamma \varkappa}{2}$ and $a_3 = \frac{\gamma}{\lambda}$. Thus the right hand side in \eqref{equ:lem2.3} is $1 - \exp(-s^\eta x^{-\eta})$, 
{\peter for $\eta = \frac{\lambda}{\gamma}$. }
In summary, for all $x > 0$, we have
	\begin{eqnarray*}
		\pr \bigg( \e^{-\gamma g(\lambda \sigma_t)(t-\sigma_t ) - a_1 g(\lambda \sigma_t) \log \sigma_t +\gamma T} \max_{n \in \{1, ... , M(t)\}} Z_n(t) \le x \bigg) & \rightarrow & \e^{-\big(\frac{x}{s}\big)^{-\frac{\lambda}{\gamma}}}  = \pr\big( W \le x\big),
	\end{eqnarray*}
where $W \sim $ Fr\'echet $\big(\frac{\lambda}{\gamma},s\big)$.
\bigskip

(ii) {\peter We have 
\begin{align*}
\id_{\frac{S(t) - \sigma_t } {\sqrt{\sigma_t }} \geq x} & = \int   \id_{s\geq x}   \id_{\Gamma_t([-\infty,\infty]\times [-\infty, \infty] \times (z, \infty])=0} \, \di\Gamma_t(s, f, z),
\end{align*}
which is an almost everywhere vaguely continuous bounded function of $\Gamma_t$.
By Theorem~\ref{theo:theo_main}, we have
$$\lim_{t\to\infty} \pr\big( {\tfrac{S(t) - \sigma_t } {\sqrt{\sigma_t }}} \geq x \big)=
 \int   1_{s\geq x}   \pr\big({\rm PPP}(\zeta)([-\infty,\infty]\times [-\infty, \infty] \times [z, \infty]))=0\big) \, \di\zeta(s, f, z).$$}
Hence, the random variable $\frac{S(t) - \sigma_t } {\sqrt{\sigma_t }} $ converges to a random variable $U$ with density
			\begin{equation*}
				\int_{-\infty}^\infty \int_0^\infty \e^{-\zeta([-\infty, \infty] \times [-\infty, \infty] \times [z, \infty])} \zeta (s, \di f, \di z).
			\end{equation*}
	We recall from above that 
			\begin{equation*}
				\zeta ([-\infty, \infty]\times[-\infty, \infty]\times[z, \infty]) = \lambda \sqrt{\pi \frac{a_3}{a_2}} \Big( \int_0^\infty \nu(w) w^{\frac{1}{a_3}} \: \di w \Big)z^{-\frac{1}{a_3}} =: c_6 z^{-\frac{1}{a_3}}.
			\end{equation*}
We get, substituting $u = z \e^{s^2 a_2 - f a_3}$, 
	\begin{multline*}
		\int_{-\infty}^\infty \int_0^\infty \e^{-\zeta ([-\infty, \infty]\times[-\infty, \infty]\times[z, \infty])} \, \di \zeta(s,f,z) \\
		= \lambda \int_0^\infty \nu(u) \int_{-\infty}^\infty \exp \Big \{-f - c_6 u^{-\frac{1}{a_3}} \e^{s^2 \frac{a_2}{a_3}}\e^{-f}\Big \} \: \di f \;\! \di u \: \di s.
	\end{multline*}
Integrating with respect to $f$ and simplifying, gives us	
$$		\int_{-\infty}^\infty \int_0^\infty \e^{-\zeta ([-\infty, \infty]\times[-\infty, \infty]\times[z, \infty])} \di \zeta(s,f,z) 
		= \sfrac{\lambda}{c_6} \e^{- s^2 \frac{a_2}{a_3}} \: \di s \int_0^\infty \nu(u) u^{\frac{1}{a_3}} \: \di u 
	 	=  \frac{1}{\sqrt{\frac{2 \pi}{\lambda \varkappa}}} \e^{-s^2 \frac{\lambda \varkappa}{2}} \: \di s.$$

\section{Appendix: an auxiliary lemma} 
\label{sec:app}

For our proofs we need the following consequences of the mean value theorem. 

\begin{lem}[]
For all $x \in [0,1]$, there exists $c_3 \in \big [g(\lambda \sigma_t), g(\lambda \sigma_t)+ \frac{w \sqrt{\sigma_t}}{t} g(\lambda \sigma_t) + \frac{a_1 g(\lambda \sigma_t)}{ \gamma t} \log \sigma_t - \frac{1}{\gamma t} \log \big( -\sfrac{1}{\eta\varepsilon} \log x \big) \big]$ such that 
\begin{equation} \label{equ:MVT3}
\begin{split}
	& m \Big(g(\lambda \sigma_t) +  \frac{w \sqrt{\sigma_t}}{t} g(\lambda \sigma_t) + \frac{a_1 g(\lambda \sigma_t)}{ \gamma t} \log \sigma_t - \frac{1}{\gamma t} \log \big( -\sfrac{1}{\eta\varepsilon} \log x \big) \Big) \\
	& \hspace{1cm} = m\big(g(\lambda \sigma_t)\big) + m'\big(g(\lambda \sigma_t)\big) \Big(  \frac{w \sqrt{\sigma_t}}{t} g(\lambda \sigma_t) + \frac{a_1 g(\lambda \sigma_t)}{ \gamma t} \log \sigma_t - \frac{1}{\gamma t} \log \big( -\sfrac{1}{\eta\varepsilon} \log x \big)\Big) \\
	&  \hspace{1cm} + \tfrac{1}{2}m''\big(c_3\big) \Big( \frac{w \sqrt{\sigma_t}}{t} g(\lambda \sigma_t) + \frac{a_1 g(\lambda \sigma_t)}{ \gamma t} \log \sigma_t - \frac{1}{\gamma t} \log \Big( -\sfrac{1}{\eta\varepsilon} \log x \Big)\Big)^2. 
\end{split}
\end{equation}	
For $\tilde{x}_0 > 0$ and $\kappa > 0$ there exist $c_4 \in \big[ g(\lambda \tilde{x}_0 \sigma_t + \log \sqrt{\sigma_t}), g(\lambda \tilde{x}_0 \sigma_t + \log \sqrt{\sigma_t}) - \kappa g'(\lambda \tilde{x}_0 \sigma_t + \log \sqrt{\sigma_t})\big]$, such that 	
	\begin{equation} \label{equ:MVT4}
	\begin{split}
	& m \Big( g(\lambda \tilde{x}_0 \sigma_t + \log \sqrt{\sigma_t}) - \kappa g'(\lambda \tilde{x}_0 \sigma_t + \log \sqrt{\sigma_t}) \Big)\\
	& \hspace{1cm} = m\Big( g(\lambda \tilde{x}_0 \sigma_t + \log \sqrt{\sigma_t}) \Big) +  m'\Big( g(\lambda \tilde{x}_0 \sigma_t + \log \sqrt{\sigma_t}) \big)\big(- \kappa g'(\lambda \tilde{x}_0 \sigma_t + \log \sqrt{\sigma_t}) \Big) \\
	&  \hspace{1cm} + \sfrac{1}{2}m''\big( c_4 \big)\Big(- \kappa g'(\lambda \tilde{x}_0 \sigma_t + \log \sqrt{\sigma_t}) \Big)^2. 
	\end{split}
	\end{equation}
And finally for all $w \in [-\infty, \infty]$ there exists $c_5 \in [\lambda\sigma_t, \lambda \sigma_t + \lambda w \sqrt{\sigma_t} + \log \sqrt{\sigma_t}]$ such that 	
	\begin{eqnarray}
		g'\big(\lambda \sigma_t + \lambda w \sqrt{\sigma_t} + \log \sqrt{\sigma_t} \big) = g' \big(\lambda\sigma_t \big) + g''\big( c_5 \big) \big(\lambda w \sqrt{\sigma_t} + \log \sqrt{\sigma_t} \big).
		\label{equ:MVT5}
	\end{eqnarray}
	\label{lem:MVT}
\end{lem}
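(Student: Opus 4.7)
\textbf{Proof proposal for Lemma \ref{lem:MVT}.} All three assertions are instances of Taylor's theorem with the Lagrange form of the remainder, so the plan is simply to verify the regularity hypotheses and then invoke Taylor's theorem on each of the three displays in turn.

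First I would record the regularity. By Assumption \textbf{(A5)}, $m$ is twice differentiable on $[0,1)$ with $m' > 0$, so $g = m^{-1} \colon [0,\infty) \to [0,1)$ is twice differentiable on $[0,\infty)$ with $g'(x) = 1/m'(g(x))$ and
\[
g''(x) \;=\; -\frac{m''(g(x))}{(m'(g(x)))^3}.
\]
In particular $m \in C^2$ on any compact sub-interval of $[0,1)$, and $g,g' \in C^1$ on any compact sub-interval of $[0,\infty)$.

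Next I would apply Taylor's theorem to prove \eqref{equ:MVT3}. Set
\[
h \;:=\; \frac{w\sqrt{\sigma_t}}{t}\,g(\lambda\sigma_t) + \frac{a_1 g(\lambda\sigma_t)}{\gamma t}\log \sigma_t - \frac{1}{\gamma t}\log\!\bigl(-\tfrac{1}{\eta\varepsilon}\log x\bigr),
\]
and note that for $t$ large enough, both $g(\lambda\sigma_t)$ and $g(\lambda\sigma_t)+h$ lie in $[0,1)$ (since $g(\lambda\sigma_t)\uparrow 1$ only at rate that dominates $h$, and anyway the lemma is stated for use at large $t$). Then Taylor's theorem for $m$ on the interval between $g(\lambda\sigma_t)$ and $g(\lambda\sigma_t)+h$ produces $c_3$ in that interval such that
\[
m(g(\lambda\sigma_t)+h) \;=\; m(g(\lambda\sigma_t)) + m'(g(\lambda\sigma_t))\,h + \tfrac12 m''(c_3)\,h^2,
\]
which is exactly \eqref{equ:MVT3}.

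Equations \eqref{equ:MVT4} and \eqref{equ:MVT5} follow by the same device. For \eqref{equ:MVT4}, set $y := \lambda \tilde x_0 \sigma_t + \log\sqrt{\sigma_t}$ and $h' := -\kappa g'(y)$, and apply Taylor's theorem with remainder of second order to $m$ on the interval between $g(y)$ and $g(y) + h'$, producing $c_4$ in that interval and the desired identity. For \eqref{equ:MVT5}, set $h'' := \lambda w\sqrt{\sigma_t} + \log\sqrt{\sigma_t}$ and apply Taylor's theorem with remainder of first order to $g'$ on $[\lambda\sigma_t,\lambda\sigma_t+h'']$ (or its reverse, if $h''<0$), producing $c_5$ in that interval with $g'(\lambda\sigma_t + h'') = g'(\lambda\sigma_t) + g''(c_5)\,h''$. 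There is no genuine obstacle: the only point worth flagging is that one must check that the intermediate points $g(\lambda\sigma_t)+h$, $g(y)+h'$, and $\lambda\sigma_t + h''$ stay inside the domains of $m$ and $g'$ respectively; this is automatic for $t$ sufficiently large because $\sigma_t \to \infty$ and the perturbations are of smaller order than the base points.
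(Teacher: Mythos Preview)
Your proposal is correct and follows exactly the approach of the paper, which simply states that the lemma ``follows from a Taylor expansion.'' Your write-up is in fact more careful than the paper's one-line proof, since you explicitly identify the increments $h$, $h'$, $h''$ and note the domain considerations.
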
	
\begin{proof}
	This follows from \peter{a Taylor expansion.} 
\end{proof}

\bigskip


\noindent
{\bf Acknowledgments:} This paper contains the main results of the third author's PhD thesis~\cite{Senkevich}. The first author is supported by EPSRC Fellowship EP/R022186/1, the second author by EPSRC grant EP/K016075/1, and the third is supported by a scholarship from the EPSRC Centre for Doctoral Training in Statistical Applied Mathematics at Bath (SAMBa), under the project EP/L015684/1.

\bigskip

\end{document}